\newtheorem{theorem}{Theorem}
\newtheorem{proposition}[theorem]{Proposition}
\newtheorem{corollary}[theorem]{Corollary}
\newtheorem{lemma}[theorem]{Lemma}
\theoremstyle{remark}
\newtheorem{example}[theorem]{Example}
\newtheorem{remark}[theorem]{Remark}
\def\CaC{\mathcal{C}}
\def\CaH{\mathcal{H}}
\title{Positioned and primary positioned $\CaC$-semigroups}
\date{}
\author{
C. Cisto and
R. Tapia-Ramos
}
\begin{document}

\maketitle

\abstract{
Let $\CaC$ be a positive integer cone and $k\in \CaC$. A $\mathcal{C}$-semigroup $S$ is $k$-positioned if for every $h\in \mathcal{C}\setminus S$ we have that $k-h$ belongs to $S$. In this work, we focus on this family of semigroups and introduce primary positioned $\mathcal{C}$-semigroups, characterizing a subfamily of them through the perspective of irreducibility. Furthermore, we provide some procedures to compute all such semigroups, describing a family of graphs containing all the primary positioned $\mathcal{C}$-semigroups for a fixed $k\in \CaC$.
}

\medskip
{\small
{\it Key words:} 
$\CaC$-semigroup, generalized numerical semigroups, $k$-positioned, primary positioned, irreducibility, symmetry and pseudo-symmetry. 

{\it Mathematics Subject Classification 2020:} 
20M14, 06F05, 05C05, 11D07.}

\section*{Introduction}

Let $\mathbb{N}$ be the set of non-negative integers and denote by $\mathbb{Q}_{\geq 0}$ the set of non-negative rational numbers. Considering a finite set $A\subseteq \mathbb{N}^d$ (for a non-zero natural number $d$), the set 
\[
\mathrm{cone}(A)=\left\{\sum_{i=1}^n q_i a_i\mid n\in \mathbb{N}, a_i\in A, q_i\in \mathbb{Q}_{\geq 0}\text{ for all }i\in \{1,\ldots,n\}\right\}\]
is the \textit{rational affine cone} spanned by $A$. We say that $\CaC=\mathrm{cone}(A)\cap \mathbb{N}^d$ is the \textit{positive integer affine cone} spanned by $A$. See \cite{bruns2009polytopes} for a detailed survey on this topic.
From now on, we omit the word ``affine'', and assume that every cone we introduce throughout the work is affine.  Let $S$ be an additive submonoid of $\mathbb{N}^d$. We say that a subset $B\subseteq S$ generates $S$ if 
\[
S=\left\{\sum_{i=1}^n \lambda_i b_i\mid n,\lambda_i \in \mathbb{N}, b_i\in B \text{ for all }i\in \{1,\ldots,n\}\right\}.
\]
If no proper subset of $B$ generates $S$, then $B$ is called a \textit{minimal system of generators of $S$}. The monoid $S$ is called an \textit{affine semigroup} if there exists a finite set $B\subset S$ that generates it. In \cite{rosales1999finitely}, it is established that every affine semigroup has a unique minimal system of generators, and we denote it by $\operatorname{msg}(S)$. Given a positive integer cone $\CaC$, an affine semigroup $S\subseteq\CaC$ is called a $\CaC$-semigroup if the set $\CaC\setminus S$ is finite. For further details on systems of generators of a $\CaC$-semigroup, see also \cite{cisto-cofinite,diaz2022characterizing}.

The notion of $\CaC$-semigroup has been introduced in \cite{Wilf-Vigneron} as a generalization of the concept of \textit{numerical semigroup}. 
Recall that a numerical semigroup is a submonoid of $\mathbb{N}$ having a finite complement in it. In particular, this coincides with that of a $\CaC$-semigroup where $\CaC=\mathbb{N}$. Another special case is when $\CaC=\mathbb{N}^d$ (for $d>1$), which has been considered first in \cite{failla2016algorithms} and where this kind of monoids are called \textit{generalized numerical semigroups} (GNS for short). 
Numerical semigroups have been extensively studied in the literature, with notable references such as \cite{libroRosales} and the references therein.
A recent research direction has been developed to generalize properties and results, regarding numerical semigroups, in the more general context of $\CaC$-semigroups or GNSs.

Throughout this paper, if $h$ is a vector in $\mathbb{N}^d$ for $d>1$, we denote by $h^{(i)}$ its $i$-th coordinate. Moreover, we use $0$ to denote both the zero vector and zero integer, since there is no risk of confusion. Let $S\subseteq \mathbb{N}^d$ be a $\CaC$-semigroup. We denote by $\CaH(S)=\CaC\setminus S$, which is called the \textit{set of gaps} of $S$, and its cardinality, referred to as the \textit{genus} of $S$, is denoted by $\operatorname{g}(S)$. These definitions naturally arise from the analogous concepts in the framework of numerical semigroups. 
A key difference between the cases $d=1$ and $d>1$ is the existence of a natural total order on $\mathbb{N}$.
If $d=1$, some important invariants of a numerical semigroup strongly depend on the total order. Among them, we recall the \textit{Frobenius number}, which is $\operatorname{F}(S)=\max (\CaH(S))$ (conventionally $\operatorname{F}(\mathbb{N})=-1$), the \textit{conductor}, that is $\operatorname{c}(S)=\operatorname{F}(S)+1$, and the \textit{multiplicity}, which is $\operatorname{m}(S)=\min(S\setminus \{0\})$. 
In the case $d>1$, a generalization of the Frobenius number can be considered by fixing a term order on $\mathbb{N}^d$. Recall that a \textit{term order} (or monomial order) $\preceq$ on $\mathbb{N}^d$ is a total order satisfying $0\preceq v$ for all $v\in \mathbb{N}^d$ and if $u\preceq v$ for $u,v\in \mathbb{N}^d$, then $u+w\preceq v+w$ for all $w\in \mathbb{N}^d$. 
A detailed  discussion of orders can be found in \cite{cox1997ideals}.
Following the definitions in
\cite{failla2016algorithms,Wilf-Vigneron}, we define the \textit{Frobenius element} of $S$ with respect to $\preceq$ as the vector $\operatorname{F}_\preceq (S)=\max_\preceq \CaH(S)$ (conventionally $\operatorname{F}_\preceq(\mathbb{N}^d)=(-1,\ldots,-1)$).

The arguments developed in this paper are inspired by the concept of \textit{positioned numerical semigroup}, introduced by \cite{Branco-Faria-Rosales} as an extension of the notion of symmetric numerical semigroup. In particular, given $S$ a numerical semigroup and $k\in \mathbb{N}$, $S$ is called $k$-positioned if for all $h\in \CaH(S)$, we have $k-h\in S$. Furthermore, $S$ is said to be \textit{positioned} if it is $\operatorname{F}(S)+\operatorname{m}(S)$-positioned. The same authors obtained many properties and results on this family of numerical semigroups in different articles, for instance, the one mentioned above and \cite{rosales2022positioned2, rosales2022positioned1} (see also the thesis \cite{faria2022dense} for an overview). 
Drawing inspiration from some results presented in the mentioned papers, in our work, given a positive integer cone $\CaC$, and $k\in \CaC$, we introduce the notion of $k$-positioned for $\CaC$-semigroups and explore some properties of it. 

The paper is structured more in detail as follows. In Section \ref{Sec1}, we provide some notations and recall the notion of symmetric and pseudo-symmetric $\CaC$-semigroups, along with their main properties, which will be useful throughout our work. In Section \ref{Sec2}, we consider the definition of $k$-positioned $\CaC$-semigroup for $k\in \CaC$. The study of this notion led us to introduce the concept of \textit{primary positioned} $\CaC$-semigroups. In particular, they are $k$-positioned $\CaC$-semigroups in which a particular relation between some invariants of the semigroup is
satisfied. These invariants are obtained by considering a different interpretation of the concept of conductor and the multiplicity of a numerical semigroup. Additionally, we find that for the genus of every $k$-positioned $\CaC$-semigroup an upper bound depending on $k$. In Section \ref{Sec3}, we show that if this bound on the genus is sharp, then the $\CaC$-semigroup is primary positioned. Moreover, we characterize this family of $\CaC$-semigroups by the notions of UESY-semigroups and PEPSY-semigroups, which are related to the concept of symmetric and pseudo-symmetric. In Section \ref{Sec4}, fixed a positive integer cone $\CaC$, we study
for which vectors $k\in \CaC$ there exists a primary positioned $\CaC$-semigroup for $k$. Furthermore, we present a complete characterization of generalized numerical semigroups. In Section \ref{section-tree}, we show how it is possible to arrange the set of all primary positioned $\CaC$-semigroups for a fixed $k\in \CaC$ in a family of rooted trees, providing also some properties about this construction. This fact suggests an algorithmic procedure to generate the set of all primary positioned $\CaC$-semigroups for a fixed $k\in \CaC$. This procedure is summarized in Section \ref{Sec6}, where some examples of this construction are also given. Finally, we mention that in some parts of the work, we support our results by also providing examples and figures, for which we have used
third-party software \texttt{Normaliz} \cite{normaliz},  and \texttt{Mathematica}\cite{mathematica}. Moreover, the \texttt{GAP}\cite{GAP} package \texttt{numericalsgps}\cite{NumericalSgps} gave us a great help.

\section{Background}\label{Sec1}

This section is devoted to introducing several preliminaries. Specifically, we define the concepts of symmetric and pseudo-symmetric $\CaC$-semigroups. Additionally, we present some well-known results related to these definitions, which will be useful in the upcoming sections.

Let $S\subseteq \mathbb{N}^d$ be a $\CaC$-semigroup. A gap $x\in\CaH(S)$ of $S$  is called a \textit{pseudo-Frobenius element of $S$} if $ x + S \setminus \{0\} \subseteq S$. The set of pseudo-Frobenius elements of $S$ is denoted by $\operatorname{PF}(S)$. Let $\preceq$ be a term order on $\mathbb{N}^d$. We say that $S$ is \textit{symmetric} if $\operatorname{PF}(S)=\{\operatorname{F}_\preceq(S)\}$. Similarly, we say that $S$ is \textit{pseudo-symmetric} if $\operatorname{PF}(S)=\{\operatorname{F}_\preceq(S), \frac{\operatorname{F}_\preceq(S)}{2}\}$. A semigroup $S$ is called \textit{irreducible} if it is either symmetric or pseudo-symmetric. This definition corresponds to \cite[Theorem 21 and Proposition 23]{pseudo-Fb}.

This work considers different orders on $\mathbb{N}^d$, apart from a term order $\preceq$ on $\mathbb{N}^d$. Given a non-empty submonoid $A \subseteq \mathbb{N}^d$ and elements $x, y \in \mathbb{N}^d$, the partial order induced by $A$ is defined by, $x\leq_A y$  if and only if $y - x \in A$. The partial order induced by a numerical semigroup has been widely studied (see \cite{Delgado-Garcia-Robles, Chomp, libroRosales}). Furthermore,  the partial order induced by a $\CaC$-semigroup has also been discussed in the literature (see \cite{Cisto-unboundedness, pseudo-Fb, WilfMarinTapia}). Building on these induced orders, and drawing inspiration from \cite{garcia-on-some} for any $k\in \CaC$  we defined the sets,
\[\operatorname{I}_\CaC(k)= \{ x \in \CaC \mid x\leq_\CaC k\}, \quad \operatorname{I}_S(k) = \{ x \in S \mid x\leq_\CaC k\}.\]
It is pointed out that our definition of $\operatorname{I}_\CaC(k)$ coincides with the definition of $\mathcal{B}(k)$ provided in \cite{Rosales-Tapia-Vigneron}.

From now on, for any set $X$, the symbol $|X|$ denotes the cardinality of the set $X$. Observe that, for any $k\in\CaC$ such that $\CaH(S)\subseteq \operatorname{I}_\CaC(k)$, it holds that $|\operatorname{I}_\CaC(k)|=\operatorname{g}(S)+|\operatorname{I}_S(k)|$.

In the following statements, we show the main characterizations of symmetric and pseudo-symmetric $\CaC$-semigroup, along with the required references, respectively.

\begin{proposition}\label{caracSymm}
Let $S\subseteq \mathbb{N}^d$ be a $\CaC$-semigroup. Then  the following statements are equivalent conditions for $S$ to be symmetric: 
\begin{enumerate}
    \item (\cite[Proposition 3]{garcia-on-some}) $\operatorname{g}(S)=|\operatorname{I}_S(\operatorname{F}_\preceq (S))|$ for some term order $\preceq$ in $\mathbb{N}^d$.
    \item (\cite[Theorem 3.6]{bhardwaj2023affine}) For some term order $\preceq$ on $\mathbb{N}^d$, it holds $\operatorname{F}_\preceq (S)-h\in S$ for all $h\in \CaH(S)$.
\end{enumerate} 
\end{proposition}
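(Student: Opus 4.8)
The plan is to route both characterizations through the description of $\operatorname{PF}(S)$ in terms of the order $\leq_S$; throughout set $F=\operatorname{F}_\preceq(S)$. First I would record the elementary fact that, for a gap $x\in\CaH(S)$, the defining condition $x+(S\setminus\{0\})\subseteq S$ of $\operatorname{PF}(S)$ is equivalent to $x$ being maximal in $\CaH(S)$ for $\leq_S$: since $\CaC$ is closed under addition we have $x+s\in\CaC$ for every $s\in S$, so $x+s\notin S$ for some $s\in S\setminus\{0\}$ is the same as the existence of a gap $x+s$ lying strictly above $x$ in $\leq_S$. Thus $\operatorname{PF}(S)$ is precisely the set of $\leq_S$-maximal gaps. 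I would then check that $F\in\operatorname{PF}(S)$ for every term order: if some $s\in S\setminus\{0\}$ had $F+s\in\CaH(S)$, then from $0\prec s$ we would get $F\prec F+s$, contradicting that $F$ is the $\preceq$-maximum of $\CaH(S)$. Consequently $S$ is symmetric if and only if $\operatorname{PF}(S)$ is a singleton, a condition that does not depend on the term order; this is what legitimizes the quantifier ``for some term order'' in both items.

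For item (2) I would use that $\CaH(S)$ is finite, so that a unique $\leq_S$-maximal element is automatically the $\leq_S$-maximum. Hence $\operatorname{PF}(S)=\{F\}$ is equivalent to $F$ being the $\leq_S$-largest gap, that is, to $h\leq_S F$ for every $h\in\CaH(S)$, which unwinds exactly to $F-h\in S$ for all $h\in\CaH(S)$. This establishes the equivalence of symmetry with item (2) in both directions simultaneously.

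For item (1) I would use the involution $\phi(x)=F-x$ on $\operatorname{I}_\CaC(F)$, which is a bijection because $x\leq_\CaC F$ if and only if $F-x\leq_\CaC F$. The order-free point is that $\phi$ always maps a non-gap of $\operatorname{I}_\CaC(F)$ to a gap: if $x\in S$ and $F-x\in S$ then $F=x+(F-x)\in S$, which is impossible. Thus $\phi$ injects the non-gaps of $\operatorname{I}_\CaC(F)$ into its gaps, so $\operatorname{I}_\CaC(F)$ has at least as many gaps as non-gaps, and in particular at most $\operatorname{g}(S)$ non-gaps. Interpreting $|\operatorname{I}_S(F)|$ as the number of non-gaps of $\operatorname{I}_\CaC(F)$, as in the identity recorded before the statement, the hypothesis of item (1) forces every gap to lie in $\operatorname{I}_\CaC(F)$ and $\phi$ to be a bijection from the non-gaps onto all of $\CaH(S)$; then each gap $h=\phi(x)$ satisfies $F-h=x\in S$, which is item (2) and hence symmetry. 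Conversely, symmetry yields item (2), whence $\CaH(S)\subseteq\operatorname{I}_\CaC(F)$ and $\phi(\CaH(S))\subseteq S$; combined with the injection above, $\phi$ swaps the gaps and the non-gaps of $\operatorname{I}_\CaC(F)$ perfectly, and the recorded identity gives $\operatorname{g}(S)=|\operatorname{I}_S(F)|$.

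I expect the main obstacle to be the bookkeeping between the term order $\preceq$ and the cone order $\leq_\CaC$. Maximality of $F$ holds for $\preceq$ but not for $\leq_\CaC$, so the inclusion $\CaH(S)\subseteq\operatorname{I}_\CaC(F)$ is not automatic: in item (1) it has to be extracted from the cardinality squeeze rather than assumed, and one must be careful to invoke the counting identity $|\operatorname{I}_\CaC(F)|=\operatorname{g}(S)+|\operatorname{I}_S(F)|$ only once that inclusion is in force.
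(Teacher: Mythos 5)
Your proof is correct, but there is nothing in the paper to compare it against: the paper does not prove this proposition at all, it imports both items from the cited references (\cite[Proposition 3]{garcia-on-some} and \cite[Theorem 3.6]{bhardwaj2023affine}), so yours stands as a self-contained derivation. Its backbone --- that $\operatorname{PF}(S)$ is exactly the set of $\leq_S$-maximal gaps, that $\operatorname{F}_\preceq(S)\in\operatorname{PF}(S)$ for every term order, and hence that symmetry is equivalent to the order-free condition $|\operatorname{PF}(S)|=1$ --- is sound, and it buys you for free the order-independence of the Frobenius element that the paper obtains separately (Corollary \ref{unique-Fb}), which is precisely what legitimizes the ``for some term order'' quantifier; the finite-poset step (a unique maximal element of the finite poset $(\CaH(S),\leq_S)$ is its maximum) and the counting argument via the involution $\phi(x)=\operatorname{F}_\preceq(S)-x$ on $\operatorname{I}_\CaC(\operatorname{F}_\preceq(S))$ are both airtight. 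Two remarks. First, you correctly resolved a genuine ambiguity in the paper's notation: under the literal definition $\operatorname{I}_S(k)=\{x\in S\mid x\leq_S k\}$, the set $\operatorname{I}_S(\operatorname{F}_\preceq(S))$ would be empty (if $x\in S$ and $\operatorname{F}_\preceq(S)-x\in S$, then $\operatorname{F}_\preceq(S)\in S$), which would make item (1) absurd; your reading $\operatorname{I}_S(k)=S\cap\operatorname{I}_\CaC(k)$ is the one the paper actually uses, e.g.\ in the identity $|\operatorname{I}_\CaC(k)|=\operatorname{g}(S)+|\operatorname{I}_S(k)|$ and in the proof of Proposition \ref{prop:k-gap}. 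Second, the degenerate case $S=\CaC$ (where $\CaH(S)=\emptyset$ and the Frobenius element is purely conventional) is not covered: item (2) then holds vacuously while $\operatorname{PF}(S)=\emptyset$ is not a singleton; this convention issue is inherited from the statement itself rather than introduced by your argument, but it would be worth excluding that case explicitly at the start of your proof.
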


\begin{proposition}\label{caracPesudo}
    Let $S\subseteq \mathbb{N}^d$ be a $\CaC$-semigroup. Then  the following statements are equivalent conditions for $S$ to be pseudo-symmetric: 
\begin{enumerate}
    \item (\cite[Proposition 4]{garcia-on-some}) $\operatorname{g}(S)=1+|\operatorname{I}_S(\operatorname{F}_\preceq(S))|$ and $\frac{\operatorname{F}_\preceq(S)}{2}\in \mathbb{N}^d$, for some term order $\preceq$ on $\mathbb{N}^d$.
    \item (\cite[Theorem 3.7]{bhardwaj2023affine}) For some term order $\preceq$ on $\mathbb{N}^d$, it holds $\operatorname{F}_\preceq (S)-h\in S$ for all $h\in \CaH(S)\setminus\left\{\frac{\operatorname{F}_\preceq (S)}{2}\right\}$.
\end{enumerate} 
\end{proposition}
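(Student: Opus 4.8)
The plan is to establish the cycle ``pseudo-symmetric $\Rightarrow$ (2) $\Rightarrow$ (1) $\Rightarrow$ pseudo-symmetric'', fixing throughout a term order $\preceq$ and writing $F=\operatorname{F}_\preceq(S)$. Everything rests on three preliminary facts that I would record first. The most useful is a reformulation of pseudo-Frobenius elements: $x\in\operatorname{PF}(S)$ if and only if $x$ is a maximal element of $\CaH(S)$ with respect to $\leq_S$. This holds because $x+s\in\CaC$ for every $s\in S$ (the cone is closed under addition), so $x+s\notin S$ is the same as producing a gap strictly $\leq_S$-above $x$; and since $\CaH(S)$ is finite, every gap lies $\leq_S$-below some element of $\operatorname{PF}(S)$. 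The other two are closure remarks: there is no $x$ with $x,F-x\in S$ (otherwise $F=x+(F-x)\in S$, contradicting $F\in\CaH(S)$); and if $\frac F2\in\mathbb{N}^d$ then $\frac F2\in\CaH(S)$, by the same argument applied to $F=\frac F2+\frac F2$. This last remark, which has no content for numerical semigroups, is what will ultimately force the configuration to be the pseudo-symmetric one.

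For pseudo-symmetric $\Rightarrow$ (2), assume $\operatorname{PF}(S)=\{F,\frac F2\}$ and take a gap $h\neq\frac F2$. By the reformulation every gap is $\leq_S$-below a maximal gap, so $h\leq_S F$ or $h\leq_S\frac F2$. In the first case $F-h\in S$ immediately; in the second $\frac F2-h\in S\setminus\{0\}$, and since $\frac F2\in\operatorname{PF}(S)$ we obtain $F-h=\frac F2+\left(\frac F2-h\right)\in S$. Either way $F-h\in S$, which is exactly (2). This is the precise analogue of the corresponding direction in Proposition \ref{caracSymm} for the symmetric case; the only new ingredient is that the extra maximal gap $\frac F2$ is absorbed using its pseudo-Frobenius property.

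The bookkeeping equivalence (2) $\Leftrightarrow$ (1) I would handle through the involution $\sigma(x)=F-x$ on the finite set $\operatorname{I}_\CaC(F)=\{x\in\CaC:F-x\in\CaC\}$, whose unique fixed point is $\frac F2$. Classifying each $\sigma$-pair by membership in $S$ or $\CaH(S)$, the first closure remark rules out pairs with both entries in $S$, while the second places the fixed point $\frac F2$ in $\CaH(S)$. If there are $p$ mixed pairs (one element in $S$, one a gap), $q$ genuine gap--gap pairs, and the one fixed gap, then $|\operatorname{I}_\CaC(F)|=2p+2q+1$, the number of gaps inside the box is $p+2q+1$, and $|\operatorname{I}_S(F)|=p$. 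Writing $E=|\CaH(S)\setminus\operatorname{I}_\CaC(F)|$ for the gaps lying outside the box, we get $\operatorname{g}(S)=p+2q+1+E$, so the identity $\operatorname{g}(S)=1+|\operatorname{I}_S(F)|$ is equivalent to $2q+E=0$, i.e. to $q=E=0$; and $q=E=0$ says precisely that every gap except $\frac F2$ maps into $S$ under $\sigma$, which is (2). Under (2) one reads off $E=0$ directly, closing the loop with (1).

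Finally, to recover pseudo-symmetric from (1) (equivalently from (2) with $E=q=0$) I would verify $\operatorname{PF}(S)=\{F,\frac F2\}$ via the maximal-gap description: $F\in\operatorname{PF}(S)$ always; $\frac F2\in\operatorname{PF}(S)$ because if $\frac F2+s\notin S$ for some $s\in S\setminus\{0\}$ then this gap lies in the box (as $E=0$) and is $\neq\frac F2$, so $F-\left(\frac F2+s\right)=\frac F2-s\in S$, forcing the contradiction $\frac F2=s+\left(\frac F2-s\right)\in S$; and any other gap $h$ has $F-h\in S\setminus\{0\}$, hence $h<_S F$ and $h\notin\operatorname{PF}(S)$. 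I expect the main obstacle to be the genuinely higher-dimensional phenomenon that a gap $h$ need not satisfy $F-h\in\CaC$, so the genus identity is not automatically available and one cannot argue purely within $\operatorname{I}_\CaC(F)$. Concretely, the dangerous configuration $q=0$, $E=1$ would yield a type-two semigroup with $\operatorname{PF}(S)=\{F,h_0\}$, $h_0\neq\frac F2$, satisfying (1) yet not pseudo-symmetric; it is ruled out exactly because $\frac F2\in\mathbb{N}^d$ forces $\frac F2$ to be a gap occupying the fixed point, so the equality $2q+E=0$ kills every gap outside the pairing. Identifying and neutralising this outside-the-box term is the crux that the numerical-semigroup proofs never have to confront.
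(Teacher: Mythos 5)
The paper never actually proves this proposition: it is stated as background, with each item quoted from the literature (\cite[Proposition 4]{garcia-on-some} for item 1, \cite[Theorem 3.7]{bhardwaj2023affine} for item 2). So there is no internal proof to compare against; judged on its own, your argument is correct and amounts to a self-contained replacement for those citations. Your three pillars all hold: the identification of $\operatorname{PF}(S)$ with the $\leq_S$-maximal gaps (valid precisely because $\CaC$ is closed under addition and $\CaH(S)$ is finite), the two closure remarks, and the involution $\sigma(x)=F-x$ on $\operatorname{I}_\CaC(F)$, which reduces both item 1 and item 2 to the single condition $q=E=0$; the recovery of $\operatorname{PF}(S)=\{F,\frac{F}{2}\}$ from that condition is also sound (the unproved claim that $F\in\operatorname{PF}(S)$ for every term order is the paper's third background proposition, or a one-line consequence of compatibility of $\preceq$ with addition). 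Two caveats that you should make explicit rather than leave implicit. First, your count $|\operatorname{I}_S(F)|=p$ reads $\operatorname{I}_S(F)$ as $S\cap\operatorname{I}_\CaC(F)$, i.e.\ with the order $\leq_\CaC$ restricted to $S$; the paper's displayed definition literally uses $\leq_S$, under which $\operatorname{I}_S(F)=\emptyset$ whenever $F\in\CaH(S)$ and item 1 would be vacuous. Your reading is the intended one --- it is forced by the paper's own identity $|\operatorname{I}_\CaC(k)|=\operatorname{g}(S)+|\operatorname{I}_S(k)|$ and by Proposition \ref{caracSymm} --- but the discrepancy deserves a sentence. Second, as your closing paragraph correctly senses, item 2 is equivalent to pseudo-symmetry only if the hypothesis $\frac{F}{2}\in\mathbb{N}^d$ is read into it: otherwise $\CaH(S)\setminus\{\frac{F}{2}\}=\CaH(S)$ and item 2 degenerates to the symmetry condition of Proposition \ref{caracSymm}, which a symmetric semigroup with $\frac{F}{2}\notin\mathbb{N}^d$ satisfies while failing to be pseudo-symmetric; your step from (2) to $E=0$ silently invokes the gap $\frac{F}{2}$ sitting at the fixed point of $\sigma$, so it genuinely requires that implicit hypothesis. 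This is a looseness in the statement as transcribed, not a flaw in your reasoning, but a complete write-up should flag it.
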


For any $\CaC$-semigroup $S$, an element $x \in \CaH(S)$ is called a \textit{special gap} of $S$ if $x \in \operatorname{PF}(S)$ and $2x \in S$. The set of all special gaps of $S$ is denoted by $\operatorname{SG}(S)$. Observe that if $S$ is irreducible, then $\operatorname{SG}(S)=\{\operatorname{F}_\preceq (S)$\}, for some term order $\preceq$. Furthermore, let us denote $\mathcal{F}(S)=\{f\in \CaC \mid f=\operatorname{F}_\preceq(S) \text{ for some term order} \preceq$\}.

\begin{proposition}
    \cite[Proposition 4.3]{Cisto-unboundedness}
    Let $S$ be a $\CaC$-semigroup. Then 
    \[
    \mathcal{F}(S)\subseteq \operatorname{Maximals}_{\leq_\CaC}(\CaH(S))\subseteq\operatorname{SG}(S)\subseteq\operatorname{PF}(S).\]
\end{proposition}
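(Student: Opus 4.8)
The plan is to prove the displayed chain as three separate inclusions, working from right to left in increasing order of subtlety; each is a short direct argument relying on the relevant definition together with the maximality condition.

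First I would dispose of the rightmost inclusion $\operatorname{SG}(S)\subseteq\operatorname{PF}(S)$, which is immediate by unwinding the definition: a special gap is by definition an element of $\operatorname{PF}(S)$ that additionally satisfies $2x\in S$, so the inclusion holds at once.

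Next I would establish $\operatorname{Maximals}_{\leq_\CaC}(\CaH(S))\subseteq\operatorname{SG}(S)$. Fixing $x\in\operatorname{Maximals}_{\leq_\CaC}(\CaH(S))$, I must verify both defining properties of a special gap. For $x\in\operatorname{PF}(S)$, I would argue by contradiction: if $x+s\notin S$ for some $s\in S\setminus\{0\}$, then since $x\in\CaC$ and $s\in S\subseteq\CaC$ the sum lies in $\CaC$, hence $x+s\in\CaH(S)$; moreover $(x+s)-x=s\in\CaC$ gives $x\leq_\CaC x+s$ with $x\neq x+s$ (as $s\neq 0$), contradicting maximality. The condition $2x\in S$ follows by the same device: if $2x\notin S$ then $2x\in\CaH(S)$, and $2x-x=x\in\CaC$ together with $x\neq 0$ (since $0\in S$, so $0\notin\CaH(S)$) would again contradict the maximality of $x$. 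Hence $x\in\operatorname{SG}(S)$.

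Finally, for the leftmost inclusion $\mathcal{F}(S)\subseteq\operatorname{Maximals}_{\leq_\CaC}(\CaH(S))$, I would take $f\in\mathcal{F}(S)$, so $f=\operatorname{F}_\preceq(S)=\max_\preceq\CaH(S)$ for some term order $\preceq$, and show $f$ cannot sit strictly below another gap in the order $\leq_\CaC$. If $f\leq_\CaC y$ with $y\in\CaH(S)$ and $y\neq f$, then $y-f\in\CaC\subseteq\mathbb{N}^d$ is nonzero, so the term-order axioms give $0\prec y-f$ and, adding $f$ to both sides, $f\prec y$, contradicting $f=\max_\preceq\CaH(S)$. I expect this last step to be the only one requiring care, since it is where the two distinct orders on $\mathbb{N}^d$ must be reconciled: the term order $\preceq$ defining $\mathcal{F}(S)$ and the partial order $\leq_\CaC$ defining the maximals. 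The reconciliation hinges precisely on the two defining axioms of a term order, namely $0\preceq v$ for all $v\in\mathbb{N}^d$ and the translation-compatibility $u\preceq v\Rightarrow u+w\preceq v+w$, which together convert membership in $\CaC$ (i.e.\ $\leq_\CaC$-comparability) into $\preceq$-comparability.
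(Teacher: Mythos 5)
Your proof is correct. Note, though, that the paper never proves this proposition: it is quoted directly from \cite[Proposition 4.3]{Cisto-unboundedness}, so there is no internal argument to compare against, and your write-up supplies a proof the paper delegates to a reference. Your decomposition is the natural one and each step is sound: the inclusion $\operatorname{SG}(S)\subseteq\operatorname{PF}(S)$ is definitional; the middle inclusion correctly uses that $\CaC$ is closed under addition, so that $x+s$ and $2x$ land in $\CaH(S)$ whenever they avoid $S$, together with $x\neq 0$ (since $0\in S$) to get a strict $\leq_\CaC$-comparison contradicting maximality; and the leftmost inclusion correctly converts $\leq_\CaC$-comparability into $\preceq$-comparability via the two term-order axioms. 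Two cosmetic remarks. First, translation-compatibility is stated for $\preceq$, not $\prec$; the strict conclusion $f\prec y$ should be obtained from $f\preceq y$ together with $f\neq y$ (which you have by hypothesis), rather than by ``adding $f$ to both sides'' of a strict inequality --- an easy repair, and in fact your phrasing can be read this way. Second, in the degenerate case $S=\CaC$ one has $\CaH(S)=\emptyset$, so $\operatorname{Maximals}_{\leq_\CaC}(\CaH(S))$, $\operatorname{SG}(S)$, $\operatorname{PF}(S)$ are empty and $\mathcal{F}(S)=\emptyset$ as well (the conventional Frobenius element lies outside $\CaC$, and $\mathcal{F}(S)$ is by definition a subset of $\CaC$); the chain then holds vacuously, a case your argument implicitly assumes away by taking $f\in\CaH(S)$.
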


As a consequence of the above result, we present the following.

\begin{corollary} \label{unique-Fb}
    Let $S$ be a $\CaC$-semigroup. The following holds:
    \begin{enumerate}
        \item If $|\operatorname{Maximals}_{\leq_\CaC}(\CaH(S))|=1$, then the Frobenius element is the same for every term order. 
        \item If $S$ is irreducible, then $\mathrm{Maximals}_{\leq_\CaC}\CaH(S)=\{\operatorname{F}_\preceq(S)\}$ for every term order $\preceq$.
    \end{enumerate} 
\end{corollary}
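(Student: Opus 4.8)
The plan is to derive both statements directly from the chain of inclusions
$\mathcal{F}(S)\subseteq \operatorname{Maximals}_{\leq_\CaC}(\CaH(S))\subseteq\operatorname{SG}(S)\subseteq\operatorname{PF}(S)$
established in the preceding proposition, together with the observation (recorded just before that proposition) that an irreducible $\CaC$-semigroup satisfies $\operatorname{SG}(S)=\{\operatorname{F}_\preceq(S)\}$ for some term order $\preceq$. Both items are essentially cardinality bookkeeping applied to this chain.

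For the first item, I would start from the leftmost inclusion $\mathcal{F}(S)\subseteq \operatorname{Maximals}_{\leq_\CaC}(\CaH(S))$. Recalling that $\mathcal{F}(S)$ is by definition the set of all vectors arising as $\operatorname{F}_\preceq(S)$ when $\preceq$ ranges over term orders, the hypothesis $|\operatorname{Maximals}_{\leq_\CaC}(\CaH(S))|=1$ forces $\mathcal{F}(S)$ to sit inside a one-element set. Since the Maximals set is nonempty we have $\CaH(S)\neq\emptyset$, so a Frobenius element exists for each term order and $\mathcal{F}(S)\neq\emptyset$; hence $|\mathcal{F}(S)|=1$. By the very definition of $\mathcal{F}(S)$, this says $\operatorname{F}_\preceq(S)$ is independent of the choice of $\preceq$, which is the assertion.

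For the second item, I would use irreducibility to obtain $\operatorname{SG}(S)=\{\operatorname{F}_{\preceq_0}(S)\}$ for some term order $\preceq_0$. Feeding this into the inclusion $\operatorname{Maximals}_{\leq_\CaC}(\CaH(S))\subseteq\operatorname{SG}(S)$ shows the Maximals set is contained in a singleton; as it is again nonempty (an irreducible semigroup has a Frobenius element, so $\CaH(S)\neq\emptyset$), it must itself be a singleton, say $\{m\}$. Invoking the first item, $\operatorname{F}_\preceq(S)=m$ for every term order $\preceq$, whence $\operatorname{Maximals}_{\leq_\CaC}(\CaH(S))=\{m\}=\{\operatorname{F}_\preceq(S)\}$ for all $\preceq$.

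No step presents a genuine obstacle: the only points requiring care are the nonemptiness arguments, i.e. guaranteeing that a Frobenius element exists so that both $\mathcal{F}(S)$ and the Maximals set are nonempty, and the elementary logic that a nonempty set contained in a singleton equals that singleton. The mathematical content is carried entirely by the inclusion chain of the quoted proposition and by the singleton description of $\operatorname{SG}(S)$ for irreducible semigroups.
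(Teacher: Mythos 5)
Your proposal is correct and follows exactly the route the paper intends: the corollary is stated there as an immediate consequence of the inclusion chain $\mathcal{F}(S)\subseteq \operatorname{Maximals}_{\leq_\CaC}(\CaH(S))\subseteq\operatorname{SG}(S)\subseteq\operatorname{PF}(S)$ together with the observation that $\operatorname{SG}(S)$ is a singleton for irreducible $S$, which is precisely the bookkeeping you carry out. Your explicit attention to the nonemptiness of $\mathcal{F}(S)$ and of the Maximals set is a welcome detail that the paper leaves implicit.
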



In the case where $S$ is irreducible $\CaC$-semigroup, the Frobenius element of $S$, as stated in the above corollary, is independent of the chosen term order. Therefore,  from this point forward, when there is no risk of ambiguity or misunderstanding, we will use the notation $\operatorname{F}(S)$ instead of $\operatorname{F}_{\preceq}(S)$.

\section{$k$-positioned  $\CaC$-semigroups}\label{Sec2}

A $\CaC$-semigroup $S$ is $k$-positioned for some $k\in \CaC$ if for every $h\in \CaH(S)$ we have that $k-h$ belongs to $S$. Notice that every $\CaC$-semigroup is $k$-positioned for a large enough $k\in \CaC$. 

This section discusses the concept of being $k$-positioned, providing several characterizations and key results. Our approach is inspired by the concept of being $k$-positioned, as discussed in \cite{Branco-Faria-Rosales} for numerical semigroups.

For a $\CaC$-semigroup $S$, consider the set 
\[X_S=\{x\in \CaC\mid h \leq_\CaC x\text{ for all }h\in \CaH(S)\}.\]
Observe that, if $S$ is a numerical semigroup, then $\mathrm{Minimals}_{\leq_\CaC}(X_S)=\{\operatorname{F}(S)\}$.   If $S$ is a GNS in $\mathbb{N}^d$, we have $\mathrm{Minimals}_{\leq_\CaC}(X_S)=\{\operatorname{Cr}(S)\}$, where each $i$-th coordinate of $\operatorname{Cr}(S)$ is defined as $\operatorname{Cr}(S)^{(i)}=\max\{h^{(i)} \mid h\in \CaH(S)\}$. 
Consider $\{e_1,\ldots, e_d\}$ the set of canonical basis vectors of $\mathbb{N}^d$, the invariant $\operatorname{Cr}(S) +\sum_{i=1}^d e_i$ is called the \textit{corner element} of $S$, first introduced in \cite{berbardini-et-all} (see also \cite{bernardini2024atoms} for some other properties related to it). If $S$ is not a GNS, then the cardinality of the set
$\mathrm{Minimals}_{\leq_\CaC}(X_S)$ can be greater than one, as shown in the following example.

\begin{example}
Consider the positive integer cone $\CaC$ spanned by $\{(4,1), (5,3)\}$, and the $\CaC$-semigroup $S$ minimally generated by 
\[\{(4,1), (4,2),(5,2),(5,3), (7,3), (7,4), (10,3), (11,3)\},\]
whose set of gaps is $\CaH(S)=\{(2,1), (3,1), (6,2), (6,3), (7,2)\}$. We obtain that $\mathrm{Minimals}_{\leq_\CaC}(X_S)=\{(12,5), (13,5), (14,5)\}$.
\end{example} 

From now on, suppose that $S$ is a $k$-positioned $\CaC$-semigroup for some $k\in \CaC$. It is not difficult to see that $k\in X_S$. In the context of numerical semigroups, it is known that $k=\operatorname{F}(S)$ if and only if $S$ is symmetric (see \cite{Branco-Faria-Rosales}). We generalize this property in a more general framework. 

\begin{proposition}\label{prop:k-gap}
Let $S$ be a $\CaC$-semigroup. Then $S$ is $k$-positioned for some $k\in \CaH(S)$ if and only if $S$ is symmetric and $\operatorname{F}(S)=k$.
\end{proposition}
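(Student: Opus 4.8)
The plan is to prove both implications by leveraging the characterization of symmetry in Proposition \ref{caracSymm}(2), together with the elementary interplay between the cone order $\leq_\CaC$ and an arbitrary term order $\preceq$.

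For the implication ($\Leftarrow$), suppose $S$ is symmetric with $\operatorname{F}(S)=k$. Since the Frobenius element is a gap, $k\in\CaH(S)$, so the required $k$ is indeed a gap. Because $S$ is irreducible, by Corollary \ref{unique-Fb} its Frobenius element does not depend on the chosen term order, so the condition in Proposition \ref{caracSymm}(2) reads $k-h\in S$ for every $h\in\CaH(S)$, which is exactly the definition of $S$ being $k$-positioned. This direction is essentially immediate once the right characterization is invoked.

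For the converse ($\Rightarrow$), suppose $S$ is $k$-positioned for some $k\in\CaH(S)$. First I would record that $k\in X_S$: for every $h\in\CaH(S)$ the difference $k-h$ lies in $S\subseteq\CaC$, so $h\leq_\CaC k$. Combined with $k\in\CaH(S)$, this shows $k$ is the $\leq_\CaC$-maximum of $\CaH(S)$, hence $\operatorname{Maximals}_{\leq_\CaC}(\CaH(S))=\{k\}$. The crucial step is to upgrade this to the statement that $k$ is the Frobenius element for every term order. For this I would use the basic fact that $x\leq_\CaC y$ implies $x\preceq y$: indeed $y-x\in\mathbb{N}^d$ forces $0\preceq y-x$, and adding $x$ preserves the term order, giving $x\preceq y$. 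Thus every gap $h$ satisfies $h\preceq k$, so $k=\max_\preceq\CaH(S)=\operatorname{F}_\preceq(S)$ for each term order $\preceq$; in particular $\operatorname{F}(S)=k$. Rewriting the $k$-positioned hypothesis as $\operatorname{F}_\preceq(S)-h\in S$ for all $h\in\CaH(S)$ and applying Proposition \ref{caracSymm}(2) then yields that $S$ is symmetric.

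The main obstacle, though a mild one, is the middle step of the converse: translating ``$k$ is the maximal gap in the coarse partial order $\leq_\CaC$'' into ``$k$ is the maximal gap in the total term order $\preceq$,'' which is what actually identifies $k$ with $\operatorname{F}_\preceq(S)$ and lets the symmetry characterization apply. Everything else is bookkeeping around the definitions of $k$-positioned, $X_S$, and the Frobenius element.
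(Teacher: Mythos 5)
Your proof is correct, and while the backward implication coincides with the paper's (Corollary \ref{unique-Fb} makes $\operatorname{F}(S)$ term-order independent, and Proposition \ref{caracSymm}(2) is then literally the $k$-positioned condition), your forward implication takes a genuinely different route. The paper's own proof of ($\Rightarrow$) constructs the map $\varphi_S\colon \CaH(S)\to \operatorname{I}_S(k)$, $h\mapsto k-h$, proves it is a bijection (surjectivity because $s\in\operatorname{I}_S(k)$ with $k-s\in S$ would force $k\in S$), thereby obtains the counting identity $\operatorname{g}(S)=|\operatorname{I}_S(k)|$, identifies $k=\operatorname{F}(S)$ via Corollary \ref{unique-Fb}, and concludes symmetry from the numerical characterization, Proposition \ref{caracSymm}(1). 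You bypass the bijection entirely: you prove the elementary fact that $\leq_\CaC$ refines every term order (from $y-x\in\CaC\subseteq\mathbb{N}^d$ one gets $0\preceq y-x$, hence $x\preceq y$), deduce that $k=\operatorname{F}_\preceq(S)$ for \emph{every} term order $\preceq$, and then note that the $k$-positioned hypothesis is verbatim the condition in Proposition \ref{caracSymm}(2). Both arguments share the same pivot---$k$ dominates all gaps in $\leq_\CaC$, hence is the Frobenius element independently of the term order---but your route through characterization (2) is shorter and self-contained, re-proving directly the order-theoretic fact that the paper delegates to Corollary \ref{unique-Fb}; the paper's route through characterization (1) costs the extra bijection argument but produces the genus identity $\operatorname{g}(S)=|\operatorname{I}_S(k)|$ as a by-product, a count whose spirit recurs later (e.g., in Propositions \ref{prop:upbound} and \ref{Prop:symm&k-positioned}).
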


\begin{proof}
Suppose that $S$ is $k$-positioned and $k\in \CaH(S)$. Consider the map
$$ \varphi_S: \CaH(S) \longrightarrow \operatorname{I}_S(k) \quad \text{ defined by } \quad h \longmapsto k-h.$$
Since $S$ is $k$-positioned, the map $\varphi$ is well-defined, and by construction, it follows that it is injective. Therefore $\operatorname{g}(S)\leq |\operatorname{I}_S(k)|$. Let $s\in  \operatorname{I}_S(k)$, it follows that $h=k-s$ belongs to $\CaH(S)$, otherwise $k\notin \CaH(S)$. In particular, $\varphi(h)=s$, and thus the map is bijective. So, $\operatorname{g}(S)=|\operatorname{I}_S(k)|$. Since $k-h\in S$ for all $h\in \CaH(S)$, we deduce that $h\leq_\CaC k$ for all $h\in \CaH(S)$ and using Corollary \ref{unique-Fb}  this implies that $k=\operatorname{F}(S)$ and by applying Proposition \ref{caracSymm} we conclude that $S$ is symmetric.

\noindent Conversely, since $k=\operatorname{F}(S)$, and by Proposition \ref{caracSymm} we obtain that $\operatorname{F}(S)-h\in S$ for all $h\in \CaH(S)$,
which is the definition of $k$-positioned.
\end{proof}



Suppose $S$ is a $k$-positioned $\CaC$-semigroup with $k\in \CaH(S)$. So, for every $x\in X_S$, we have $h\leq_\CaC x$ for all $h\in \CaH(S)$, in particular $k\leq_\CaC x$, and this fact implies that $\mathrm{Minimals}_{\leq_\CaC}(X_S)=\{k\}$. Assuming now that $S$ is $k$-positioned with $\mathrm{Minimals}_{\leq_\CaC}(X_S)=\{k\}$. It is natural to ask whether $k\in \CaH(S)$. While this holds for numerical semigroups, as we have explained above, but it is not generally the case for $\CaC$-semigroups, as we illustrate in the following example. 

\begin{example}
Let $S$ be the GNS in $\mathbb{N}^2$, defined in Figure \ref{FIG:positionedNOTsim}, which set of gaps is $$\CaH(S)=\{(0,1), (0,3), (1,0), (1,1), (1,2), (1,3), (2,0), (2,1), (4,1)\}.$$
So,  $\operatorname{Cr}(S)=\{(4,3)\}$ and it is easy to check that  $S$ is  $(4,3)$-positioned, but $S$ is not symmetric since $\operatorname{PF}(S)=\{ (0,3), (1,2), (1,3), (2,0), (2,1), (4,1)\}$.
\begin{figure}[h]
    \centering
    \includegraphics[width=0.45\linewidth]{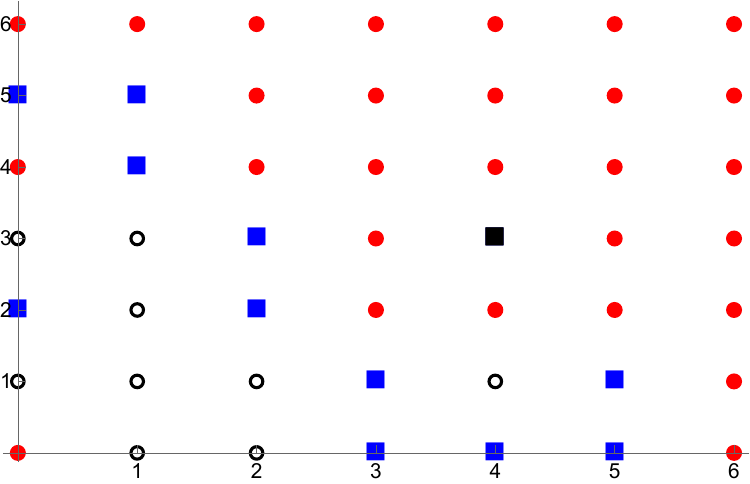}
    \captionsetup{width=0.62\textwidth}
    \caption{$\mathbb{N}^2$-semigroup $S$, $\circ\in\CaH(S)$; \textcolor{black}{\scalebox{0.6}{$\blacksquare$}} $\equiv\operatorname{Cr}(S)$; \textcolor{red}{$\bullet$} $\in S$; \textcolor{blue}{\scalebox{0.6}{$\blacksquare$}} $\in \operatorname{msg}(S)$}
    \label{FIG:positionedNOTsim}
\end{figure}
\end{example}

Based on the previous considerations, given a $k$-positioned $\CaC$-semigroup, we assume that $k$ is an element of the $\CaC$-semigroup.

If $S$ is a $k$-positioned numerical semigroup, it is known that $k\geq \operatorname{F}(S)+\operatorname{m}(S)$. In fact, if $\operatorname{F}(S)<k<\operatorname{F}(S)+\operatorname{m}(S)$, then $k$ can be expressed as $k=\operatorname{F}(S)+i$ with $i\in \{1,2,\ldots,\operatorname{m}(S)-1\}\subset \CaH(S)$ and $k-i\notin S$, whence there are no $k$-positioned numerical semigroups. This argument can be interpreted to $\CaC$-semigroups differently to obtain a generalization of the above inequality. 
To this purpose, for any $\CaC$-semigroup $S$, we introduce the following sets (inspired by similar definitions provided in \cite{cisto-generalization}).
\begin{itemize}
    \item[] $\operatorname{C}(S)= \{ x \in \CaC \mid x\leq_\CaC h, \text{ for some } h\in \CaH(S)\}$,
    \item[] $\operatorname{M}(S) = \big\{ h \in \CaH(S) \mid \operatorname{I}_S(h)=\{0\}\big\}\cup\{0\}$.
\end{itemize}

We present the following characterization.

\begin{proposition}\label{prop:M(S)=0}
    Let $S$ be a $\CaC$-semigroup. Then, $\operatorname{M}(S)=\{0\}$ if and only if $S=\CaC$.
\end{proposition}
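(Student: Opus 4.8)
The plan is to prove the biconditional by establishing both directions, with the nontrivial content lying in the forward implication.

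\medskip

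\emph{The easy direction.} First I would dispose of the implication $S=\CaC$ $\Rightarrow$ $\operatorname{M}(S)=\{0\}$. If $S=\CaC$, then $\CaH(S)=\CaC\setminus S=\emptyset$, so the defining set $\{h\in\CaH(S)\mid \operatorname{I}_S(h)=\{0\}\}$ is empty, and therefore $\operatorname{M}(S)=\emptyset\cup\{0\}=\{0\}$. This requires no work beyond unpacking the definition.

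\medskip

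\emph{The contrapositive of the hard direction.} For the forward implication $\operatorname{M}(S)=\{0\}$ $\Rightarrow$ $S=\CaC$, I would argue by contraposition: assume $S\neq\CaC$, i.e.\ $\CaH(S)\neq\emptyset$, and produce a nonzero element of $\operatorname{M}(S)$. The strategy is to pick a gap that is \emph{minimal} among all gaps with respect to the order $\leq_\CaC$. Concretely, take $h\in\operatorname{Minimals}_{\leq_\CaC}(\CaH(S))$, which exists and is nonempty since $\CaH(S)$ is a nonempty finite set and $\leq_\CaC$ is a partial order. I claim such an $h$ satisfies $\operatorname{I}_S(h)=\{0\}$, and hence $h\in\operatorname{M}(S)\setminus\{0\}$, contradicting $\operatorname{M}(S)=\{0\}$.

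\medskip

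\emph{Verifying the claim.} To show $\operatorname{I}_S(h)=\{0\}$, suppose toward a contradiction that there is some $s\in S$ with $0\neq s\leq_S h$, meaning $h-s\in S$ (so $h-s\in\CaC$ as well). Write $h=s+(h-s)$. The element $h-s$ lies in $\CaC$ and satisfies $h-s\leq_\CaC h$ (since $s\in\CaC$); moreover $h-s\neq h$ because $s\neq 0$. I would then argue that $h-s$ must be a gap: if $h-s\in S$, then since $s\in S$ and $S$ is a submonoid we would get $h=s+(h-s)\in S$, contradicting $h\in\CaH(S)$. Thus $h-s\in\CaH(S)$ is a gap strictly below $h$ in the order $\leq_\CaC$, contradicting the minimality of $h$. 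The main (and only genuine) obstacle is making sure the decomposition is legitimate — that $h-s\in\CaC$ and that $h-s\neq h$ — both of which follow directly from $s\in S\subseteq\CaC$ and $s\neq 0$. Once the claim is verified, $0\in\operatorname{I}_S(h)$ trivially (as $0\leq_S h$ since $h\in S$? — note $h$ is a gap, so instead $0\leq_S h$ holds because $h-0=h\in$? here one uses that $0\in S$ always, but the definition of $\operatorname{I}_S(h)$ requires $h\in$ the ambient set, so I would double-check the convention). With $\operatorname{I}_S(h)=\{0\}$ established, $h$ is a nonzero member of $\operatorname{M}(S)$, completing the contrapositive and the proof.
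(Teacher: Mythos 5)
Your proof is correct and takes a genuinely different, more direct route than the paper. The paper argues by infinite descent: assuming $\operatorname{M}(S)=\{0\}$ and picking any gap $h$, it uses $h\notin\operatorname{M}(S)$ to write $h=z_1+c_1$ with $z_1\in S\setminus\{0\}$ and $c_1\in\CaH(S)$, then iterates on $c_1$ to manufacture an infinite strictly $\leq_\CaC$-decreasing sequence of gaps, contradicting $|\CaH(S)|<\infty$. You instead spend the finiteness of $\CaH(S)$ once, up front, to choose $h\in\operatorname{Minimals}_{\leq_\CaC}(\CaH(S))$, and then apply the key decomposition step a single time: if a nonzero $s\in S$ sits below $h$, then $h-s$ is a nonzero gap strictly $\leq_\CaC$-below $h$, which is impossible. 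The underlying mechanism is identical in both proofs (the dichotomy ``$h-s\in S$ would force $h\in S$, hence $h-s\in\CaH(S)$'', plus finiteness of the gap set); yours packages it as a minimal-counterexample argument rather than a descent, which buys a shorter proof and avoids having to note that the constructed gaps $c_1,c_2,\ldots$ are pairwise distinct, a point the paper's version quietly relies on.

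Your closing hedge about whether $0\in\operatorname{I}_S(h)$ is well placed, and it points at a defect in the paper's definition rather than in your argument. Read literally, $\operatorname{I}_S(h)=\{x\in S\mid h-x\in S\}$ is empty for every gap $h$ (any such decomposition would put $h\in S$), so the condition $\operatorname{I}_S(h)=\{0\}$ would be unsatisfiable, $\operatorname{M}(S)$ would always equal $\{0\}$, and the proposition would be false. The reading the paper actually uses---visible in its own proof of this proposition, in the gloss of $\operatorname{M}(S)$ inside the proof of Lemma~\ref{lemma:technical}, and in the claim that $|\operatorname{M}(S)|=\operatorname{m}(S)$ for numerical semigroups---is that $h\in\operatorname{M}(S)$ if and only if no $s\in S\setminus\{0\}$ satisfies $s\leq_\CaC h$, i.e.\ $h-s\in\CaC$. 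Your verification step is in fact written for that reading: the case split you set up ($h-s\in S$ versus $h-s\in\CaH(S)$) only arises if the hypothesis is $h-s\in\CaC$, not $h-s\in S$ as your first line states. So the only correction needed is to open the verification with ``suppose $s\in S\setminus\{0\}$ and $h-s\in\CaC$''; everything else, including the conclusion that $h$ is a nonzero element of $\operatorname{M}(S)$, stands.
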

\begin{proof}
    If $S=\CaC$ trivially $\operatorname{M}(S)=\{0\}$. Conversely, assume $\operatorname{M}(S)=\{0\}$ and suppose that there exists $h\in \CaH(S)$. So, there exist $z_1\in S\setminus \{0\}$ and $c_1\in \CaC$ such that $z_1+c_1=h$. In particular, since $h\notin  S$, we deduce $c_1\in \CaH(S)$. Again, we have $c_1\notin \operatorname{M}(S)$ and applying the same argument we obtain $z_2\in S\setminus \{0\}$ and $c_2\in \CaH(S)$ such that $z_2+c_2=c_1$. Continuing this process, we obtain an infinite sequence of elements $\{c_i\}_{i\geq 1}\subseteq \CaH(S)$, which is impossible since $S$ is a $\CaC$-semigroup. Hence, $S=\CaC$.
\end{proof}

Observe that if $S$ is a numerical semigroup, then $|\operatorname{M}(S)|=\operatorname{m}(S)$  and $|\operatorname{C}(S)|=\operatorname{F}(S)+1$.  

\begin{proposition}\label{prop:map-MC}
Let $S$ be a $k$-positioned $\CaC$-semigroup with $k\in S$. Then, the following map
$$ \phi_S: \operatorname{M}(S) \longrightarrow \operatorname{I}_\CaC(k)\setminus \operatorname{C}(S) \quad \text{ defined by } \quad h \longmapsto k-h$$
is well-defined and injective. In particular, $|\operatorname{M}(S)|+|\operatorname{C}(S)|\leq |\operatorname{I}_\CaC(k)|$.
\end{proposition}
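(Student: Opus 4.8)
The plan is to prove the map $\phi_S$ is well-defined and injective, then derive the cardinality inequality as an immediate consequence.

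\medskip

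\textbf{Well-definedness.} Take $h\in \operatorname{M}(S)$ and set $x=k-h$. I must show $x\in \operatorname{I}_\CaC(k)\setminus \operatorname{C}(S)$. First I note that for $h=0$ we have $x=k$, and since $k\in S$ is $k$-positioned one checks $k\notin \operatorname{C}(S)$ (if $k\leq_\CaC h'$ for some gap $h'$ then $h'-k\in\CaC$, contradicting $h'\leq_\CaC k$ together with $h'\neq k$; the case $h'=k$ is excluded as $k\in S$). For $h\in \operatorname{M}(S)\cap\CaH(S)$, since $S$ is $k$-positioned we get $x=k-h\in S\subseteq\CaC$, and $x\leq_\CaC k$ because $k-x=h\in\CaC$, so $x\in \operatorname{I}_\CaC(k)$. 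The crux is showing $x\notin \operatorname{C}(S)$: suppose for contradiction that $x\leq_\CaC h'$ for some gap $h'\in\CaH(S)$, i.e.\ $h'-x=h'-(k-h)=h-(k-h')\in\CaC$. Since $S$ is $k$-positioned, $k-h'\in S$, and $h=(k-h')+(h'-x)$ exhibits $h$ as $(k-h')+w$ with $k-h'\in S$ and $w=h'-x\in\CaC$; I want to argue this forces $k-h'\in \operatorname{I}_S(h)\setminus\{0\}$, contradicting $\operatorname{I}_S(h)=\{0\}$ from $h\in\operatorname{M}(S)$. Indeed $(k-h')\leq_\CaC h$ since $h-(k-h')=w\in\CaC$, so $k-h'\in\operatorname{I}_S(h)$; and $k-h'\neq 0$ since otherwise $h'=k$, impossible as $k\in S$ but $h'\in\CaH(S)$. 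This is exactly the contradiction sought, so $x\notin\operatorname{C}(S)$.

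\medskip

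\textbf{Injectivity.} This is immediate: if $\phi_S(h_1)=\phi_S(h_2)$ then $k-h_1=k-h_2$, hence $h_1=h_2$.

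\medskip

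\textbf{The inequality.} Since $\phi_S$ is an injection from $\operatorname{M}(S)$ into $\operatorname{I}_\CaC(k)\setminus\operatorname{C}(S)$, I get $|\operatorname{M}(S)|\leq |\operatorname{I}_\CaC(k)|-|\operatorname{C}(S)|$, equivalently $|\operatorname{M}(S)|+|\operatorname{C}(S)|\leq|\operatorname{I}_\CaC(k)|$, provided $\operatorname{C}(S)\subseteq\operatorname{I}_\CaC(k)$ so that the set difference has the expected cardinality; this containment holds because every gap $h'$ satisfies $h'\leq_\CaC k$ (as $S$ is $k$-positioned with $k\in X_S$), so any $x\leq_\CaC h'\leq_\CaC k$ lies in $\operatorname{I}_\CaC(k)$.

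\medskip

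\textbf{Main obstacle.} The delicate step is verifying $x=k-h\notin\operatorname{C}(S)$ for $h\in\operatorname{M}(S)$; the algebra of translating "$x\leq_\CaC$ some gap" into "some nonzero element of $S$ lies below $h$" and landing precisely on the defining property $\operatorname{I}_S(h)=\{0\}$ of $\operatorname{M}(S)$ is where the argument really lives. Everything else (injectivity, and assembling the counting bound, using that $\operatorname{C}(S)\subseteq\operatorname{I}_\CaC(k)$) is routine.
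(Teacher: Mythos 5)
Your proof is correct and takes essentially the same approach as the paper's: the identical exchange argument (from $k-h\leq_\CaC h'$ with $h'$ a gap, deduce via $k$-positionedness that $k-h'$ is a nonzero element of $S$ lying $\leq_\CaC$-below $h$, contradicting $h\in\operatorname{M}(S)$). The extra bookkeeping you include --- the separate $h=0$ case, the verification that $k-h\in\operatorname{I}_\CaC(k)$, and the containment $\operatorname{C}(S)\subseteq\operatorname{I}_\CaC(k)$ needed to justify the cardinality count --- is glossed over in the paper but does not change the underlying argument.
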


\begin{proof}
Since $S$ is $k$-positioned, if $h\in \operatorname{M}(S)$, then $k-h\notin  \operatorname{C}(S)$. Otherwise, there exists at least one element $y\in \CaH(S)$ such that $k-h\leq_\CaC y$. By using that $\leq_\CaC$ is compatible with the addition, $k-y\leq_\CaC h$. If $k-y\in \CaH(S)$ then $S$ is not $k$-positioned, therefore $k-y\in S$. Taking into account that $h\in \operatorname{M}(S)$, we have  $k-y=0$ and thus $k=y\in \CaH(S)$, which is a contradiction. By construction $\phi_S$ is injective, and thus $|\operatorname{M}(S)|+|\operatorname{C}(S)|\leq |\operatorname{I}_\CaC(k)|$.
\end{proof}

In line with Proposition \ref{prop:map-MC}, we say that a $\CaC$-semigroup is \textit{primary positioned} if it is $k$-positioned for some $k\in \CaC$, and $|\operatorname{M}(S)| + |\operatorname{C}(S)| = |\operatorname{I}_\CaC(k)|$. 
The property for a $\CaC$-semigroup $S$ of be primary positioned  for some $k$ implies that $k\in S$. Otherwise $\operatorname{C}(S)=\operatorname{I}_\CaC(k)$, and since $|\operatorname{M}(S)|\geq 1$ we have $|\operatorname{M}(S)| + |\operatorname{C}(S)| > |\operatorname{I}_\CaC(k)|$.

In the context of numerical semigroups, that is, if we assume that $S$ is a primary positioned numerical semigroup, the equality $|\operatorname{M}(S)|+|\operatorname{C}(S)|=|\operatorname{I}_\CaC(k)|$ specializes to $\operatorname{m}(S)+\operatorname{F}(S)+1= k+1$. So, it is worth mentioning that the primary positioned numerical semigroups correspond with the \textit{positioned} numerical semigroups introduced in \cite{Branco-Faria-Rosales}.

The following lemma presents a characterization relating the maximal gaps of $S$ and the minimal non-zero elements of $S$, assuming that $S$ is a $k$-positioned $\CaC$-semigroup.

\begin{lemma}\label{lemma:min-max}
Let $S$ be a primary positioned $\CaC$-semigroup for $k\in S$, and let $s\in \operatorname{I}_S(k)$. Then,  $k-s\in \operatorname{Maximals}_{\leq_\CaC}(\CaH(S))$ if and only if $s\in \operatorname{Minimals}_{\leq_\CaC}(S\setminus\{0\})$.
\end{lemma}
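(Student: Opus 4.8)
The plan is to prove both directions by contrapositive-style arguments that exploit the defining property of a $k$-positioned semigroup: $k-h\in S$ for every gap $h$, and (equivalently, by bijectivity arguments like the one in Proposition~\ref{prop:k-gap}) that $k-s$ is a gap whenever $s\in\operatorname{I}_S(k)\setminus\{k\}$ is an element. The key structural tool is that the reflection map $x\mapsto k-x$ is an order-reversing involution on $\operatorname{I}_\CaC(k)$: for $x,y\leq_\CaC k$ we have $x\leq_\CaC y$ if and only if $k-y\leq_\CaC k-x$, which follows immediately from the translation-invariance of $\leq_\CaC$. I expect this reflection to swap the roles of ``maximal among gaps'' and ``minimal among nonzero elements.''

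First I would establish the two set-level facts that make the reflection well-behaved. Since $S$ is primary positioned, the injective map $\phi_S$ of Proposition~\ref{prop:map-MC} is a bijection, and more basically the restricted map $h\mapsto k-h$ sends $\CaH(S)$ into $\operatorname{I}_S(k)$ (because $S$ is $k$-positioned and $h\leq_\CaC k$), while $s\mapsto k-s$ sends $\operatorname{I}_S(k)\setminus\{k\}$ into $\CaH(S)$: indeed if $k-s\in S$ then $k=s+(k-s)\in S$ is fine, so one must argue that whenever $s$ is a \emph{proper} part of $k$ with $k-s\in S$ one can still detect gaps correctly. The cleanest route is to note that under the primary positioned hypothesis the map $h\mapsto k-h$ is a bijection $\CaH(S)\to\operatorname{I}_S(k)$ exactly as in the proof of Proposition~\ref{prop:k-gap}; I would either cite that bijectivity or re-derive it via the genus identity $|\operatorname{I}_\CaC(k)|=\operatorname{g}(S)+|\operatorname{I}_S(k)|$ recorded in Section~\ref{Sec1}.

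For the forward direction, I assume $k-s\in\operatorname{Maximals}_{\leq_\CaC}(\CaH(S))$ and show $s\in\operatorname{Minimals}_{\leq_\CaC}(S\setminus\{0\})$. Suppose not: then there is $s'\in S\setminus\{0\}$ with $s'\leq_\CaC s$ and $s'\neq s$, i.e.\ $s'\lneq_\CaC s$. Reflecting, $k-s\lneq_\CaC k-s'$; since $s'\leq_\CaC s\leq_\CaC k$ we have $s'\in\operatorname{I}_S(k)$, and because the reflection carries elements of $\operatorname{I}_S(k)$ (other than $k$) to gaps, $k-s'\in\CaH(S)$ with $k-s'\gneq_\CaC k-s$, contradicting maximality of $k-s$. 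I must also rule out the degenerate case $s'=0$ separately, which is excluded since we reflect nonzero elements; and handle the edge case $k-s'=k$ (i.e.\ $s'=0$) consistently. The reverse direction is the mirror image: assuming $s$ minimal in $S\setminus\{0\}$, any gap $h'\gneq_\CaC k-s$ reflects to an element $k-h'\lneq_\CaC s$ lying in $S\setminus\{0\}$, contradicting minimality, so $k-s$ is maximal among gaps.

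The main obstacle I anticipate is the careful bookkeeping around the reflection's endpoints and the nonzero/proper-part conditions: one must be sure that reflecting a nonzero element of $\operatorname{I}_S(k)$ lands in $\CaH(S)$ rather than accidentally in $S$, which is precisely where the primary positioned (not merely $k$-positioned) hypothesis and the resulting bijection $\CaH(S)\leftrightarrow\operatorname{I}_S(k)\setminus\{k\}$ do the real work; a naive $k$-positioned assumption alone would only give one inclusion of the correspondence. Once that bijection is in hand, the order-reversing property of the reflection makes both implications essentially automatic, so the proof is short modulo this verification.
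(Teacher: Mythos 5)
Your proof rests on a claim that is false for the class of semigroups in the statement: namely, that for a primary positioned $S$ (with $k\in S$) the reflection $x\mapsto k-x$ gives a bijection between $\CaH(S)$ and $\operatorname{I}_S(k)\setminus\{0,k\}$, equivalently that $k-s'\in\CaH(S)$ for every $s'\in\operatorname{I}_S(k)\setminus\{0,k\}$. By Proposition \ref{Prop:symm&k-positioned}, that bijectivity is equivalent to $S$ being a UESY-semigroup, and being primary positioned is strictly weaker: a PEPSY-semigroup as in Theorem \ref{theo:PEPSY} is primary positioned, yet $k=\frac{k}{2}+\frac{k}{2}$ with $\frac{k}{2}\in S$ exhibits a nonzero $s'=\frac{k}{2}\in\operatorname{I}_S(k)\setminus\{0,k\}$ whose reflection lies in $S$, not in $\CaH(S)$; the non-root vertices of the trees of Section \ref{section-tree} are further examples, since their genus falls strictly below $\frac{|\operatorname{I}_\CaC(k)|-2}{2}$, so the injection $\psi_S$ cannot be surjective. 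Citing Proposition \ref{prop:k-gap} does not repair this: that result concerns $k\in\CaH(S)$, whereas here $k\in S$; and the identity $|\operatorname{I}_\CaC(k)|=\operatorname{g}(S)+|\operatorname{I}_S(k)|$ holds for any $S$ with $\CaH(S)\subseteq\operatorname{I}_\CaC(k)$ and yields no surjectivity. Concretely, your forward direction collapses whenever the witness $s'\lneq_\CaC s$ satisfies $k-s'\in S$ (e.g.\ $s'=\frac{k}{2}$): no gap above $k-s$ is produced, so no contradiction with maximality arises. Your reverse direction proves only maximality of $k-s$ \emph{conditional} on $k-s$ being a gap; establishing $k-s\in\CaH(S)$ at all is the nontrivial half, and it is exactly where you invoke the same false claim.

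What the primary positioned hypothesis actually provides is bijectivity of a different map, $\phi_S:\operatorname{M}(S)\to\operatorname{I}_\CaC(k)\setminus\operatorname{C}(S)$ from Proposition \ref{prop:map-MC}, and the paper's proof pivots on it. Forward: if $x\in S\setminus\{0\}$ with $x<_\CaC s$, then maximality of $k-s$ forces $k-x\notin\operatorname{C}(S)$ (any element of $\operatorname{C}(S)$ lies below some gap, which would then strictly dominate the maximal gap $k-s$), and bijectivity of $\phi_S$ yields $x=k-(k-x)\in\operatorname{M}(S)$, so $x$ is a gap or $0$, a contradiction. Reverse: if $k-s\notin\operatorname{C}(S)$, bijectivity gives $s\in\operatorname{M}(S)$, impossible since $s\in S\setminus\{0\}$; hence $k-s\leq_\CaC h$ for some $h\in\operatorname{Maximals}_{\leq_\CaC}(\CaH(S))$, and then $k-h\in S\setminus\{0\}$ with $k-h\leq_\CaC s$, so minimality of $s$ forces $k-h=s$, i.e.\ $k-s=h$ is a maximal gap. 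Your order-reversal observation is correct and is implicitly at work there, but the passage between ``gap'' and ``element'' must go through the sets $\operatorname{M}(S)$ and $\operatorname{C}(S)$, not through $\CaH(S)$ and $\operatorname{I}_S(k)$.
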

\begin{proof}
Suppose $k-s\in \operatorname{Maximals}_{\leq_\CaC}(\CaH(S))$. If $s\notin \operatorname{Minimals}_{\leq_\CaC}(S\setminus \{0\})$, then there exists $x\in S\setminus \{0\}$ such that $x<_\CaC s$. So, $k-s <_\CaC k-x$. By the maximality of $k-s$, we deduce $k-x\notin \operatorname{C}(S)$. Consider the injective map $\phi_S$ in Proposition~\ref{prop:map-MC}, since $S$ is primary positioned $\phi_S$ is bijective, then $x=k-(k-x)\in \operatorname{M}(S)\subseteq \CaH(S)$, a contradiction.  Now, suppose $s\in \operatorname{Minimals}_{\leq_\CaC}(S\setminus\{0\})$ such that $s\leq_\CaC k$, let us see that $k-s\in \CaH(S)$. Trivially, if $k-s\notin\operatorname{C}(S)$, by the property of being primary positioned, the map $\phi_S$ given in Proposition \ref{prop:map-MC} is bijective, and thus we deduce that $k-(k-s)=s\in \operatorname{M}(S)\subseteq \CaH(S)$, which is impossible. So, we assume that $k-s\in\operatorname{C}(S)$. By definition, take $h\in\operatorname{Maximals}_{\leq_\CaC}(\CaH(S))$ such that $k-s\leq_\CaC h$. In particular, $k-h\leq_\CaC s$ and $k-h\in S\setminus \{0\}$. Therefore, by the minimality of $s$, we have $k-h=s$. So, $k-s=h\in \CaH(S)$.
\end{proof}
The result below provides an upper bound on the genus of a $k$-positioned $\CaC$-semigroup.

\begin{proposition}\label{prop:upbound}
Let $S$ be a $k$-positioned $\CaC$-semigroup for some $k\in S$. Then, 
\begin{equation}\label{boundGenus}
    \operatorname{g}(S)\leq\dfrac{|\operatorname{I}_\CaC(k)|-2}{2}.
\end{equation}
\end{proposition}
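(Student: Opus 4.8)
The plan is to clear the denominator and reduce the claim to a clean counting statement. The inequality \eqref{boundGenus} is equivalent to $2\operatorname{g}(S)+2\le |\operatorname{I}_\CaC(k)|$. Since $S$ is $k$-positioned, every gap $h\in\CaH(S)$ satisfies $k-h\in S\subseteq\CaC$, hence $h\le_\CaC k$, so $\CaH(S)\subseteq\operatorname{I}_\CaC(k)$. This is exactly the hypothesis needed to invoke the cardinality identity recorded in Section~\ref{Sec1}, namely $|\operatorname{I}_\CaC(k)|=\operatorname{g}(S)+|\operatorname{I}_S(k)|$. Substituting this into the target inequality, it becomes $\operatorname{g}(S)+2\le|\operatorname{I}_S(k)|$, so it suffices to show that $\operatorname{I}_S(k)$ contains at least $\operatorname{g}(S)+2$ elements.

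First I would reuse the injection from Proposition~\ref{prop:k-gap}: the map $\varphi_S\colon\CaH(S)\to\operatorname{I}_S(k)$, $h\mapsto k-h$, is well defined (because $S$ is $k$-positioned) and injective, so its image has exactly $\operatorname{g}(S)$ elements, all of the form $k-h$ with $h\in\CaH(S)$. The crux is then to exhibit two elements of $\operatorname{I}_S(k)$ lying outside this image. The natural candidates are $0$ and $k$: both lie in $S$ and in $\operatorname{I}_S(k)$ (here one uses $k\in S$), and an image element $k-h$ equals $0$ only when $h=k$ and equals $k$ only when $h=0$. Since $k\in S$ and $0\in S$, neither $k$ nor $0$ is a gap, so neither is attained by $\varphi_S$.

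Provided $k\neq 0$, the elements $0$ and $k$ are distinct, giving $|\operatorname{I}_S(k)|\ge\operatorname{g}(S)+2$; combining this with the identity above yields $|\operatorname{I}_\CaC(k)|=\operatorname{g}(S)+|\operatorname{I}_S(k)|\ge 2\operatorname{g}(S)+2$, which rearranges to the desired bound. The one delicate point—and the main obstacle—is precisely this nondegeneracy, i.e. ruling out $0=k$. If $k=0$, then $k$-positioning forces $\CaH(S)=\varnothing$ and hence $S=\CaC$, a degenerate case in which $|\operatorname{I}_\CaC(k)|=1$ and the stated inequality is not meaningful; since the $k$-positioned semigroups under study have $k\in S$ a genuine nonzero position, we may assume $k\neq 0$ and the argument closes. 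Everything else is bookkeeping that leans on two already-established facts: the partition identity $|\operatorname{I}_\CaC(k)|=\operatorname{g}(S)+|\operatorname{I}_S(k)|$ and the injectivity of $\varphi_S$.
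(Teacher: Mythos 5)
Your proposal is correct and takes essentially the same route as the paper: the paper simply packages your two extra elements $0$ and $k$ into the target of the map, defining $\psi_S:\CaH(S)\to \operatorname{I}_S(k)\setminus\{0,k\}$, $h\mapsto k-h$, and then applies the same partition identity $|\operatorname{I}_\CaC(k)|=\operatorname{g}(S)+|\operatorname{I}_S(k)|$, so your "image avoids $0$ and $k$" step is exactly the paper's well-definedness of $\psi_S$. The one point you flag explicitly, the degenerate case $k=0$ (where $0$ and $k$ coincide and the stated bound actually fails), is left implicit in the paper's proof, so this is a shared caveat rather than a gap in your argument.
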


\begin{proof}
Since $S$ be a $k$-positioned the map $\psi_S: \CaH(S) \longrightarrow \operatorname{I}_S(k)\setminus\{0,k\}$ with $\psi_S(h)=k-h$ is well-defined and injective. Therefore, $\operatorname{g}(S)\leq |\operatorname{I}_S(k)|-2$ and since $\operatorname{I}_\CaC(k)=\CaH(S)\sqcup \operatorname{I}_S(k)$, we have $|\operatorname{I}_\CaC(k)|=\operatorname{g}(S)+|\operatorname{I}_S(k)|\geq 2\operatorname{g}(S)+2$. So, we obtain the desired inequality.
\end{proof}

As an immediate consequence of the above proposition, we obtain the following result.

\begin{corollary}
Let $S$ be a $k$-positioned GNS in $\mathbb{N}^d$ for some $k\in S$. Then, 
$$\operatorname{g}(S)\leq\dfrac{\prod_{i=1}^d(k^{(i)}+1)-2}{2}.$$
\end{corollary}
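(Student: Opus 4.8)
The corollary follows directly from the preceding Proposition~\ref{prop:upbound} by specializing the general bound to the case $\CaC=\mathbb{N}^d$. The plan is to show that the quantity $|\operatorname{I}_\CaC(k)|$, which appears in inequality~\eqref{boundGenus}, admits an explicit product formula when $S$ is a GNS, and then substitute this formula into the bound.

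First I would recall that for a GNS we have $\CaC=\mathbb{N}^d$, so the induced order $\leq_\CaC$ is precisely the componentwise (product) order on $\mathbb{N}^d$: for $x,y\in\mathbb{N}^d$, we have $x\leq_{\mathbb{N}^d} y$ if and only if $y-x\in\mathbb{N}^d$, that is, $x^{(i)}\leq y^{(i)}$ for every $i\in\{1,\ldots,d\}$. Consequently, by the definition of $\operatorname{I}_\CaC(k)$ given earlier in the excerpt,
\[
\operatorname{I}_{\mathbb{N}^d}(k)=\{x\in\mathbb{N}^d\mid x\leq_{\mathbb{N}^d} k\}=\{x\in\mathbb{N}^d\mid 0\leq x^{(i)}\leq k^{(i)}\text{ for all }i\}=\prod_{i=1}^d\{0,1,\ldots,k^{(i)}\}.
\]

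The key step is the cardinality count: since $\operatorname{I}_{\mathbb{N}^d}(k)$ is a product of the discrete intervals $\{0,1,\ldots,k^{(i)}\}$, and the $i$-th such interval contains exactly $k^{(i)}+1$ elements, the cardinality of the product is the product of the cardinalities, giving
\[
|\operatorname{I}_{\mathbb{N}^d}(k)|=\prod_{i=1}^d\bigl(k^{(i)}+1\bigr).
\]
Substituting this into the inequality $\operatorname{g}(S)\leq\bigl(|\operatorname{I}_\CaC(k)|-2\bigr)/2$ from Proposition~\ref{prop:upbound} yields the asserted bound.

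There is essentially no obstacle here: the entire content is the observation that $\operatorname{I}_{\mathbb{N}^d}(k)$ is a combinatorial box and that counting lattice points in a box factors as a product over coordinates. The only point requiring a (minor) remark is that the general bound of Proposition~\ref{prop:upbound} applies verbatim, which it does because a $k$-positioned GNS is in particular a $k$-positioned $\CaC$-semigroup with $\CaC=\mathbb{N}^d$ and $k\in S$; thus all hypotheses of the proposition are met and no additional argument is needed.
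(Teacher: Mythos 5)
Your proof is correct and matches the paper's intended argument: the paper presents this corollary as an immediate consequence of Proposition~\ref{prop:upbound}, with the only content being exactly your observation that $|\operatorname{I}_{\mathbb{N}^d}(k)|=\prod_{i=1}^d\bigl(k^{(i)}+1\bigr)$ because $\operatorname{I}_{\mathbb{N}^d}(k)$ is the box of lattice points componentwise below $k$. Nothing is missing.
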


\section{UESY-semigroups and PEPSY-semigroups}\label{Sec3}

We say that any $\CaC$-semigroup $S$ is a \textit{UESY- semigroup} (unitary extension of a symmetric $\CaC$-semigroup) if there exists a symmetric $\CaC$-semigroup $T$ such that $S=T\cup\{\operatorname{F}(T)\}$. Similarly, we say that $S$ is a \textit{PEPSY-semigroup} if there exists a pseudo-symmetric $\CaC$-semigroup $T$ such that $S=T\cup\{\frac{\operatorname{F}(T)}{2},\operatorname{F}(T)\}$. These definitions are the natural extension of the concept of UESY-semigroups and PEPSY-semigroups given in \cite{UESY} and \cite{PEPSY}, respectively, for numerical semigroups. 

The main objective of this section is to study the connection between the two previous classes and the property of being primary positioned.

The following proposition, which extends \cite[Theorem 1.8]{UESY},  establishes the equivalence between UESY-semigroups and $k$-positioned $\CaC$-semigroups with the maximum possible number of gaps inside $\operatorname{I}_\CaC(k)$. 

\begin{proposition}\label{Prop:symm&k-positioned}
Let $S$ be a $\CaC$-semigroup and let $k\in S$. The following properties are equivalent:
\begin{enumerate}
    \item[1)] $S$ is $k$-positioned and $\operatorname{g}(S)=\dfrac{|\operatorname{I}_\CaC(k)|-2}{2}$.
    \item[2)] $S$ is $k$-positioned and $k\in \operatorname{msg}(S)$.
    \item[3)] $S$ is a UESY-semigroup.
\end{enumerate}
\end{proposition}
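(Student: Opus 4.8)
The plan is to prove the cyclic chain of implications $1)\Rightarrow 2)\Rightarrow 3)\Rightarrow 1)$, using the symmetric-characterization machinery (Propositions~\ref{caracSymm} and \ref{prop:k-gap}) as the main engine. The conceptual heart of the argument is the observation that adjoining a single pseudo-Frobenius element to a symmetric $\CaC$-semigroup $T$ produces exactly a $k$-positioned $\CaC$-semigroup in which $k=\operatorname{F}(T)+m$ for some minimal generator $m$; so the genus equality in $1)$ and the generator condition in $2)$ are two faces of the extremal case of the bound in Proposition~\ref{prop:upbound}.

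First I would handle $3)\Rightarrow 1)$, which is the most transparent direction. Assume $S=T\cup\{\operatorname{F}(T)\}$ for a symmetric $\CaC$-semigroup $T$. By Proposition~\ref{prop:k-gap} (applied to $T$ with $k=\operatorname{F}(T)$), the semigroup $T$ is $\operatorname{F}(T)$-positioned, so $\operatorname{F}(T)-h\in T$ for every $h\in\CaH(T)$. The gaps of $S$ are $\CaH(S)=\CaH(T)\setminus\{\operatorname{F}(T)\}$. I would then fix a suitable $k$ — the natural candidate is $k=\operatorname{F}(T)+m$ where $m$ is a minimal generator of $S$ sitting ``just above'' $\operatorname{F}(T)$; more robustly, one checks that $S$ is $k$-positioned for $k=2\operatorname{F}(T)+m$ or, following the numerical-semigroup template, $k=\operatorname{F}(T)+\operatorname{m}$-type value. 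The cleaner route is to verify directly that for $h\in\CaH(S)$ one has $k-h\in S$, using that $k-h=(\operatorname{F}(T)-h)+m$ lies in $T+T\subseteq S$, and then to compute the genus: since $|\operatorname{I}_\CaC(k)|=\operatorname{g}(S)+|\operatorname{I}_S(k)|$ and the injection $\psi_S$ of Proposition~\ref{prop:upbound} becomes a bijection onto $\operatorname{I}_S(k)\setminus\{0,k\}$ precisely when the symmetry of $T$ forces every non-extreme element of $\operatorname{I}_S(k)$ to be hit, I obtain $\operatorname{g}(S)=|\operatorname{I}_S(k)|-2$, hence $\operatorname{g}(S)=(|\operatorname{I}_\CaC(k)|-2)/2$.

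Next, for $1)\Rightarrow 2)$, the extremal genus says the injection $\psi_S\colon\CaH(S)\to\operatorname{I}_S(k)\setminus\{0,k\}$ from the proof of Proposition~\ref{prop:upbound} is a \emph{bijection}. I would exploit this surjectivity to show $k$ cannot be decomposed as a sum of two nonzero elements of $S$: if $k=s_1+s_2$ with $s_i\in S\setminus\{0\}$, then each $s_i\in\operatorname{I}_S(k)\setminus\{0,k\}$, so by bijectivity $k-s_i\in\CaH(S)$; but $k-s_1=s_2\in S$, contradicting $k-s_1\in\CaH(S)$. Hence $k\in\operatorname{msg}(S)$. Finally, for $2)\Rightarrow 3)$, I set $T=S\setminus\{k\}$; since $k$ is a minimal generator, $T$ is still a $\CaC$-semigroup (removing a minimal generator from a $\CaC$-semigroup yields a $\CaC$-semigroup), and $\operatorname{F}(T)=k$ because $k$ becomes the maximal gap. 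I then show $T$ is symmetric via Proposition~\ref{caracSymm}(2): for $h\in\CaH(T)=\CaH(S)\cup\{k\}$, I verify $k-h=\operatorname{F}(T)-h\in T$, using the $k$-positioned property of $S$ on the old gaps and the trivial case $h=k$. Writing $S=T\cup\{\operatorname{F}(T)\}$ exhibits $S$ as a UESY-semigroup.

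I expect the main obstacle to be the bookkeeping around the \emph{choice of $k$} in $3)\Rightarrow 1)$ and the verification that $T=S\setminus\{k\}$ is genuinely symmetric in $2)\Rightarrow 3)$: in the $\CaC$-semigroup setting one must confirm that the Frobenius element is well-defined and term-order-independent (which is guaranteed by Corollary~\ref{unique-Fb} once $\operatorname{Maximals}_{\leq_\CaC}(\CaH(T))$ is a singleton), and that removing $k$ does not accidentally create new maximal gaps incomparable to $k$. Ensuring $\operatorname{Maximals}_{\leq_\CaC}(\CaH(T))=\{k\}$ is the delicate point, and I would lean on Lemma~\ref{lemma:min-max} together with the bijectivity of $\phi_S$ to control the maximal gaps precisely.
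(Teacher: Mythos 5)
Your directions $1)\Rightarrow 2)$ and $2)\Rightarrow 3)$ are correct and essentially identical to the paper's: bijectivity of $\psi_S$ rules out any decomposition $k=s_1+s_2$ with $s_1,s_2\in S\setminus\{0\}$, and $T=S\setminus\{k\}$ is a $\CaC$-semigroup that is $k$-positioned with $k\in\CaH(T)$, hence symmetric with $\operatorname{F}(T)=k$. (The paper gets this last step by citing Proposition~\ref{prop:k-gap}, which already packages the Frobenius-element bookkeeping you worry about at the end; you do not need Lemma~\ref{lemma:min-max} or the map $\phi_S$, which concern \emph{primary} positioned semigroups. The relevant elementary fact is that every gap $h$ of a $k$-positioned semigroup satisfies $k-h\in S\subseteq\CaC$, i.e.\ $h\leq_\CaC k$, which forces $\operatorname{Maximals}_{\leq_\CaC}(\CaH(T))=\{k\}$.)

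The genuine gap is in $3)\Rightarrow 1)$: your choice of $k$ is wrong. Writing $S=T\cup\{\operatorname{F}(T)\}$ with $T$ symmetric, the correct choice --- and the one the paper makes --- is simply $k=\operatorname{F}(T)$, which lies in $S$ by construction; then symmetry of $T$ gives $k-h\in T\subseteq S$ for every $h\in\CaH(S)=\CaH(T)\setminus\{k\}$, and Proposition~\ref{caracSymm} gives $\operatorname{g}(T)=\frac{|\operatorname{I}_\CaC(k)|}{2}$, whence $\operatorname{g}(S)=\operatorname{g}(T)-1=\frac{|\operatorname{I}_\CaC(k)|-2}{2}$. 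Your candidates $k=\operatorname{F}(T)+m$ (or $2\operatorname{F}(T)+m$), imported from the positioned-numerical-semigroup template $\operatorname{F}(S)+\operatorname{m}(S)$, do make $S$ $k$-positioned (exactly as your computation $k-h=(\operatorname{F}(T)-h)+m$ shows), but the genus equality in $1)$ then \emph{fails}: since $\operatorname{I}_\CaC(\operatorname{F}(T))\subsetneq\operatorname{I}_\CaC(\operatorname{F}(T)+m)$, one has $\operatorname{g}(S)=\frac{|\operatorname{I}_\CaC(\operatorname{F}(T))|-2}{2}<\frac{|\operatorname{I}_\CaC(\operatorname{F}(T)+m)|-2}{2}$. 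Concretely, for your $k$ the map $\psi_S$ is not surjective: the element $m\in\operatorname{I}_S(k)\setminus\{0,k\}$ satisfies $k-m=\operatorname{F}(T)\in S$, so $m$ is not in the image, contrary to what your ``cleaner route'' asserts. The $\operatorname{F}+\operatorname{m}$ intuition belongs to the notion of primary positioned (Theorem~\ref{Theo:UESY} and the discussion preceding it), not to this proposition, where the extremal genus bound is attained precisely at $k=\operatorname{F}(T)$.
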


\begin{proof}
$1)$ implies $2)$. Consider the injective map $\psi_S$ given in the proof of Proposition \ref{prop:upbound}. By hypothesis, it follows that $\psi_S$ is bijective. Suppose that $k\notin \operatorname{msg}(S)$ so $k=s_1+s_2$ for some $s_1,s_2\in S\setminus\{0\}$. In particular, $s_1\in \operatorname{I}_S(k)\setminus\{0,k\}$ and therefore, there exists $h\in \CaH(S)$ such that $\psi_S(h)=s_1$. So, $k-h=s_1=k-s_2$ which implies that $h=s_2\in S$, a contradiction.

\noindent $2)$ implies $3)$. Since $k\in \operatorname{msg}(S)$, then $S\setminus\{k\}=T$ is a $\CaC$-semigroup, so $k\in \CaH(T)$. To prove that $S$ is UESY-semigroup, let us show that $T$ is symmetric, and according to the result given in Proposition \ref{prop:k-gap}, it suffices to check that $T$ is $k$-positioned. Let $h\in \CaH(T)$. If $h=k$ then $k-h=0\in T$. Otherwise $h\in \CaH(S)$ and since $S$ is $k$-positioned it follows that $k-h\in S\subset T$, which completes the proof. 

\noindent $3)$ implies $1)$. If $S$ is a UESY-semigroup, then $S=T\cup\{\operatorname{F}(T)\}$, where $T$ is symmetric. Take $k=\operatorname{F}(T)$. By applying Proposition~\ref{caracSymm}, it follows that $\operatorname{g}(T)=\frac{|\operatorname{I}_\CaC(k)|}{2}$. Observe that $\operatorname{g}(S)=\operatorname{g}(T)-1$, so $\operatorname{g}(S)=\frac{|\operatorname{I}_\CaC(k)|-2}{2}$. Now, let $h\in \CaH(S)=\CaH(T)\cup \{k\}$. By the symmetry of $T$ we have $k-h\in T\setminus \{k\}=S$. Thus, $S$ is $k$-positioned.
\end{proof}

From the previous proposition, we state the next result.

\begin{theorem}\label{Theo:UESY}
If $S$ is an UESY-semigroup, then is primary positioned.
\end{theorem}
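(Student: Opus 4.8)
The plan is to prove that any UESY-semigroup $S$ is primary positioned by combining the characterization just established in Proposition \ref{Prop:symm&k-positioned} with the equivalences for symmetric semigroups from Proposition \ref{caracSymm}. Since $S$ is a UESY-semigroup, we have $S = T \cup \{\operatorname{F}(T)\}$ for some symmetric $\CaC$-semigroup $T$; set $k = \operatorname{F}(T)$. By Proposition \ref{Prop:symm&k-positioned}, $S$ is $k$-positioned with $k \in S$, so the injective map $\phi_S$ of Proposition \ref{prop:map-MC} exists and gives the inequality $|\operatorname{M}(S)| + |\operatorname{C}(S)| \leq |\operatorname{I}_\CaC(k)|$. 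The goal is precisely to upgrade this to equality, i.e.\ to show $\phi_S$ is surjective.

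\medskip

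First I would record that, by the implication $3) \Rightarrow 1)$ of Proposition \ref{Prop:symm&k-positioned}, we already know $\operatorname{g}(S) = \tfrac{|\operatorname{I}_\CaC(k)|-2}{2}$, equivalently $|\operatorname{I}_\CaC(k)| = 2\operatorname{g}(S) + 2$. The natural strategy is then to compute $|\operatorname{M}(S)|$ and $|\operatorname{C}(S)|$ separately and verify their sum equals $2\operatorname{g}(S)+2$. For $\operatorname{C}(S) = \{x \in \CaC \mid x \leq_\CaC h \text{ for some } h \in \CaH(S)\}$, I would exploit the $k$-positioned property together with symmetry: the map sending $x \mapsto k - x$ should set up a correspondence between $\operatorname{C}(S)$ and a suitable subset of $\operatorname{I}_S(k)$, so that $|\operatorname{C}(S)|$ can be expressed in terms of $\operatorname{g}(S)$ and $|\operatorname{I}_S(k)|$. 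A cleaner route, however, is to observe that $\operatorname{I}_\CaC(k)$ is the disjoint union of $\operatorname{C}(S)$ (those elements lying below some gap) and the complementary set $\operatorname{I}_\CaC(k) \setminus \operatorname{C}(S)$, and that $\phi_S$ maps $\operatorname{M}(S)$ injectively into the latter; surjectivity of $\phi_S$ is then equivalent to the identity $|\operatorname{M}(S)| + |\operatorname{C}(S)| = |\operatorname{I}_\CaC(k)|$, which is what we want.

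\medskip

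The main work is therefore to prove $\phi_S$ is onto. Take any $y \in \operatorname{I}_\CaC(k) \setminus \operatorname{C}(S)$; I must produce $h \in \operatorname{M}(S)$ with $k - h = y$, i.e.\ show that $h := k - y$ lies in $\operatorname{M}(S)$. Since $y \notin \operatorname{C}(S)$, the element $y$ is not below any gap, which (using that $T$ is symmetric and $k = \operatorname{F}(T)$, via Proposition \ref{caracSymm}) should force $h = k - y$ either to be $0$ or to be a gap of $S$ with $\operatorname{I}_S(h) = \{0\}$. Here I would use the symmetry characterization $\operatorname{F}(S)-h' \in S$ for gaps $h'$ to control membership, and the minimality structure via Lemma \ref{lemma:min-max}, which directly links $\operatorname{Maximals}_{\leq_\CaC}(\CaH(S))$ with $\operatorname{Minimals}_{\leq_\CaC}(S \setminus \{0\})$. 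I anticipate the delicate step to be verifying that such $y$ yields $h$ with $\operatorname{I}_S(h) = \{0\}$ rather than merely $h \in \CaH(S)$: I would argue that if some nonzero $s \in S$ satisfied $s \leq_\CaC h$, then $y = k - h \leq_\CaC k - s$, and one must then derive that $y$ is below a gap, contradicting $y \notin \operatorname{C}(S)$, again invoking the $k$-positioned hypothesis and symmetry of $T$.

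\medskip

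The main obstacle I foresee is cleanly establishing surjectivity of $\phi_S$ directly; the disjointness decomposition $\operatorname{I}_\CaC(k) = \operatorname{C}(S) \sqcup (\operatorname{I}_\CaC(k)\setminus\operatorname{C}(S))$ is trivial, but identifying the image of $\phi_S$ with the whole second block requires carefully tracking when $k-y$ has trivial lower set. A possibly shorter alternative, which I would try first, sidesteps the direct computation entirely: by Proposition \ref{Prop:symm&k-positioned} we have $S$ is $k$-positioned with $\operatorname{g}(S) = \tfrac{|\operatorname{I}_\CaC(k)|-2}{2}$, and this is exactly the equality case of the genus bound in Proposition \ref{prop:upbound}; if the forthcoming results of the paper (for instance the converse direction relating the sharp genus bound to primary positioned-ness, promised in Section \ref{Sec3}) can be arranged to precede this theorem, then primary positioned-ness follows at once. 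Failing that, the self-contained argument via the two counting identities for $|\operatorname{M}(S)|$ and $|\operatorname{C}(S)|$ is the robust fallback, and I would present that.
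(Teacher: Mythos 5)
Your core argument --- proving surjectivity of $\phi_S$ by taking $y \in \operatorname{I}_\CaC(k)\setminus\operatorname{C}(S)$, using the symmetry of $T$ to force $k-y \in \CaH(S)$, and then showing $k-y\in\operatorname{M}(S)$ by deriving from any nonzero $s \leq_\CaC k-y$ that $y \leq_\CaC k-s$ with $k-s$ a gap, contradicting $y \notin \operatorname{C}(S)$ --- is exactly the paper's proof of this theorem. One caution: the shortcut you say you would try first (invoking a forthcoming ``sharp genus bound implies primary positioned'' result) would be circular, since in the paper that statement is nothing other than Proposition \ref{Prop:symm&k-positioned} combined with this very theorem, so your self-contained fallback is the correct (and the paper's) route.
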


\begin{proof}
By definition of be UESY-semigroup, $S=T\cup\{\operatorname{F}(T)\}$ for some symmetric $\CaC$-semigroup $T$. By applying Proposition \ref{Prop:symm&k-positioned}, $S$ is $k$-positioned with $k=\operatorname{F}(T)$. Consider the injective map $\varphi_S$ given in Proposition \ref{prop:map-MC}, to show that $S$ is primary positioned, it suffices to prove that $\varphi_S$ is bijective. Trivially, if $x=k$ then $k-x=0\in\operatorname{M}(S)$. So, let $x\in \operatorname{I}_\CaC(k)\setminus (\operatorname{C}(S)\cup\{k\})$. Observe that if $k-x\in S\setminus\{k\}=T$, by the symmetry of $T$ it follows that $x\in \CaH(T)=\CaH(S)\cup\{k\}$, and thus $x\in \CaH(S)\subset \operatorname{C}(S)$, which it is not possible. Therefore, $k-x\in \CaH(S)$. Suppose now that $k-x\notin \operatorname{M}(S)$, that is, there exists $z\in S\setminus\{0\}$ such that $z\leq_\CaC k-x$, which implies that $x\leq_\CaC k-z$. 
Since $x\neq 0$, we have $z\neq k$. In particular, $z\in T$ and thus $k-z\in \CaH(T)$, otherwise, if $k-z$ belongs to $T$, we have $k=(k-z)+z\in T$, which contradicts $k=\operatorname{F}(T)$. So, $k-z\in \CaH(S)\setminus\{k\}\subset \operatorname{C}(S)$ and consequently, we obtain $x\in\operatorname{C}(S)$, which is false. We conclude that $k-x\in \operatorname{M}(S)$ for every $x\in \operatorname{I}_\CaC(k)\setminus \operatorname{C}(S)$, which means that $\varphi_S$ is bijective.
\end{proof}

Returning to the case where $S$ is a numerical semigroup, we know that there exists a unique natural number $k = \operatorname{F}(S) + \operatorname{m}(S)$ such that $S$ is primary positioned, as previously shown. Motivated by this fact, we may inquire whether there exist two distinct values $k_1, k_2 \in \CaC$ for which $S$ is primary positioned. Assuming  that $S$ is an UESY-semigroup, the answer is given in the following example.

\begin{example}\label{ex:2UESY}
Let $\CaC$ be the cone spanned by $\{(1,0),(1,1)\}$ and let $S=\CaC \setminus \{(1,0),(1,1),(2,0),(2,1),(2,2),(3,1),(3,2),(4,2)\}$ be the $\CaC$-semigroup given in Figure \ref{fig:Countraex.UESY}. Notice that $T_1=S\setminus\{(7,5)\}$ and $T_2=S\setminus\{(7,2)\}$ are two symmetric $\CaC$-semigroups. So, $S$ is a UESY-semigroup for $T_1, T_2$. By Theorem \ref{Theo:UESY}, we conclude that $S$ is primary positioned for two different $k_1=(7,5)$ and $k_2=(7,2)$.
\begin{figure}[h]
    \centering
    \includegraphics[width=0.5\linewidth]{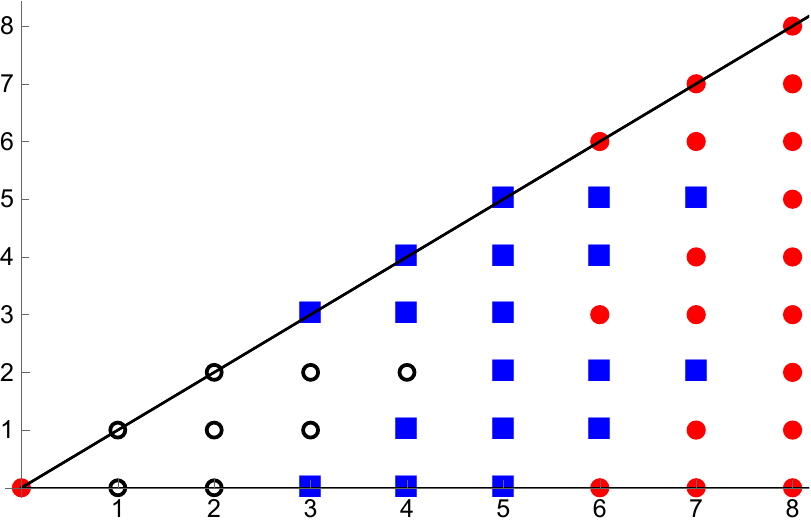}
    \captionsetup{width=0.56\textwidth}
    \caption{Primary positioned $\CaC$-semigroup $S$, $\circ\in\CaH(S)$;  \textcolor{red}{$\bullet$} $\in S$; \textcolor{blue}{\scalebox{0.6}{$\blacksquare$}} $\in \operatorname{msg}(S)$}
    \label{fig:Countraex.UESY}
\end{figure}
\end{example}

Next, we focus on obtaining similar results to those presented in Proposition \ref{Prop:symm&k-positioned} and Theorem \ref{Theo:UESY}, for the case of PEPSY-semigroups. The following result is inspired by \cite[Theorem 33]{PEPSY}, which is formulated for numerical semigroups, and it extends to $\CaC$- semigroups. We say that an element $x\in S$ has a unique expression in $S$ if there does not exist two different pairs $(s_1,s_2),(s_3,s_4)\in S^*\times S^*$  such that $x=s_1+s_2=s_3+s_4$.

\begin{proposition}\label{Prop:PEPSY&k-positioned}
Let $S$ be a $\CaC$-semigroup and let $k\in S$. The following properties are equivalent:
\begin{enumerate}
    \item[1)] $S$ is $k$-positioned, and $\operatorname{g}(S)=\dfrac{|\operatorname{I}_\CaC(k)|-3}{2}$.
    \item[2)] $S$ is $k$-positioned, and $k$ has a unique expression in $S$.
    \item[3)]  $S$ is a PEPSY-semigroup.
\end{enumerate}
\end{proposition}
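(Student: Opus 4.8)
The plan is to establish the cycle of implications $1)\Rightarrow 2)\Rightarrow 3)\Rightarrow 1)$, paralleling the structure of Proposition \ref{Prop:symm&k-positioned}. Throughout I will use the injective map $\psi_S:\CaH(S)\to \operatorname{I}_S(k)\setminus\{0,k\}$, $h\mapsto k-h$, from the proof of Proposition \ref{prop:upbound}, together with the identity $|\operatorname{I}_\CaC(k)|=\operatorname{g}(S)+|\operatorname{I}_S(k)|$ (valid since $S$ being $k$-positioned forces $\CaH(S)\subseteq \operatorname{I}_\CaC(k)$). The crucial observation is that an element $s\in\operatorname{I}_S(k)\setminus\{0,k\}$ fails to lie in the image of $\psi_S$ precisely when $k-s\in S$, i.e. precisely when $(s,k-s)$ is an expression of $k$ as a sum of two nonzero elements of $S$.

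For $1)\Rightarrow 2)$: combining $\operatorname{g}(S)=\frac{|\operatorname{I}_\CaC(k)|-3}{2}$ with the identity above gives $|\operatorname{I}_S(k)|=\operatorname{g}(S)+3$, hence $|\operatorname{I}_S(k)\setminus\{0,k\}|=\operatorname{g}(S)+1$. Since $\psi_S$ is injective with domain of size $\operatorname{g}(S)$, exactly one element $s_0\in\operatorname{I}_S(k)\setminus\{0,k\}$ is omitted from its image, so $k-s_0\in S$. If $s_0\neq k-s_0$, then $k-s_0$ would be a second omitted element (as $k-(k-s_0)=s_0\in S$), a contradiction; therefore $s_0=k-s_0=\frac{k}{2}\in S^*$, and $(\tfrac k2,\tfrac k2)$ is the only pair in $S^*\times S^*$ summing to $k$. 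Thus $k$ has a unique expression.

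For $2)\Rightarrow 3)$: the unique expression forces its two summands to coincide (otherwise the swapped pair would be a second expression), so $\frac{k}{2}\in S^*$ and $k=\frac k2+\frac k2$. Put $T=S\setminus\{\frac k2,k\}$. The main step is to verify that $T$ is a submonoid: for nonzero $s_1,s_2\in T$ one has $s_1+s_2\neq k$ (else $s_1=s_2=\frac k2\notin T$) and $s_1+s_2\neq \frac k2$ (else $(s_1,\,s_2+\frac k2)$ would be a second expression of $k$ distinct from $(\frac k2,\frac k2)$), hence $s_1+s_2\in T$. As $\CaC\setminus T=\CaH(S)\cup\{\frac k2,k\}$ is finite, $T$ is a $\CaC$-semigroup. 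Because $S$ is $k$-positioned, every gap of $T$ satisfies $\leq_\CaC k$, so $k$ is the unique $\leq_\CaC$-maximal gap of $T$; by Corollary \ref{unique-Fb} this gives $\operatorname{F}(T)=k$ for every term order. Finally, for each $h\in\CaH(T)\setminus\{\frac k2\}=\CaH(S)\cup\{k\}$ one checks $k-h\in T$ (for $h=k$ it equals $0$; for $h\in\CaH(S)$ it equals $k-h\in S$, which avoids $\frac k2$ and $k$ because $h\neq\frac k2,0$), so by Proposition \ref{caracPesudo} the semigroup $T$ is pseudo-symmetric with $\frac{\operatorname{F}(T)}{2}=\frac k2\in\CaH(T)\cap\mathbb{N}^d$. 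Hence $S=T\cup\{\frac{\operatorname{F}(T)}{2},\operatorname{F}(T)\}$ is a PEPSY-semigroup.

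For $3)\Rightarrow 1)$: write $S=T\cup\{\frac{\operatorname{F}(T)}{2},\operatorname{F}(T)\}$ with $T$ pseudo-symmetric, and set $k=\operatorname{F}(T)\in S$. Since $\frac{\operatorname{F}(T)}{2},\operatorname{F}(T)\in\operatorname{PF}(T)\subseteq\CaH(T)$, we get $\operatorname{g}(S)=\operatorname{g}(T)-2$. Irreducibility of $T$ makes $k$ the unique maximal gap, so $|\operatorname{I}_\CaC(k)|=\operatorname{g}(T)+|\operatorname{I}_T(k)|$; Proposition \ref{caracPesudo}(1) gives $|\operatorname{I}_T(k)|=\operatorname{g}(T)-1$, whence $|\operatorname{I}_\CaC(k)|=2\operatorname{g}(T)-1$ and $\operatorname{g}(S)=\frac{|\operatorname{I}_\CaC(k)|-3}{2}$. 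For $k$-positionedness, any $h\in\CaH(S)\subseteq\CaH(T)\setminus\{\frac k2\}$ satisfies $k-h\in T\subseteq S$ by pseudo-symmetry. I expect the only genuinely delicate point to be the closure of $T$ under addition in $2)\Rightarrow 3)$, where the uniqueness of the expression of $k$ is exactly what prevents a sum of two elements of $T$ from landing on the removed element $\frac k2$.
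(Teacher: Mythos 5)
Your proof is correct and follows essentially the same route as the paper's: the same cycle $1)\Rightarrow 2)\Rightarrow 3)\Rightarrow 1)$, the same counting argument with the injective map $\psi_S$ to identify $\frac{k}{2}$ as the unique element of $\operatorname{I}_S(k)\setminus\{0,k\}$ missed by its image, the same passage to $T=S\setminus\{\frac{k}{2},k\}$ justified by the unique expression of $k$, and the same invocation of Proposition~\ref{caracPesudo} and Corollary~\ref{unique-Fb}. The only (harmless) deviations are that you verify closure of $T$ under addition directly rather than via $\frac{k}{2}\in\operatorname{msg}(S)$, you obtain $\frac{k}{2}\in S$ from the expression itself rather than from positionedness (which avoids implicitly assuming $\frac{k}{2}\in\CaC$), and you correctly use $\operatorname{g}(S)=\operatorname{g}(T)-2$ where the paper's text contains the slip $\operatorname{g}(S)=\operatorname{g}(T)+2$.
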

\begin{proof}
$1)$ implies $2)$. From the definition of $\operatorname{I}_\CaC(k)$ we have that $\operatorname{g}(S)=|\operatorname{I}_S(k)|-3$. By the inequality~\eqref{boundGenus} and by the arguments contained in the proof of Proposition~\ref{prop:upbound}, we deduce that the injective map $\psi_S$ is not bijective, and there exists a unique element $x\in \operatorname{I}_S(k)\setminus\{k,0\}$ such that $k-x\notin \CaH(S)$. If $x\ne\frac{k}{2}$ then $k-x\in \operatorname{I}_S(k)\setminus\{0,x,k\} $ and this implies that there exists $h\in \CaH(S)$ such that $\psi_S(h)=k-x$. Therefore, $x=h\in \CaH(S)$ which is a contradiction. Hence, $x=\frac{k}{2}$ and $\psi'_{S}:\CaH(S)\longrightarrow \operatorname{I}_S(k)\setminus\{0,\frac{k}{2},k\}$  defined via $\psi'_{S}(h)=k-h$ is bijective. We prove that $k$ has a unique expression in $S$. Observe that, $k=\frac{k}{2}+\frac{k}{2}$ and we assume $k=s_1+s_2$ with $s_1,s_2\in S\setminus\{0,\frac{k}{2}\}$. Since $s_1\in \operatorname{I}_S(k)\setminus\{0,\frac{k}{2},k\}$ there exists $h\in \CaH(S)$ such that $\psi'_{S}(h)=s_1$. So, $k-s_2=s_1=k-h$, and this implies that $h=s_2\in S$, which is false.

\noindent $2)$ implies $3)$. 
Since $S$ is $k$-positioned necessary $\frac{k}{2}\in S$. Otherwise, i.e. $\frac{k}{2}\in\CaH(S)$, it follows that $k-\frac{k}{2}=\frac{k}{2}\in S$, which is false. Notice $k=\frac{k}{2}+\frac{k}{2}$ and by hypothesis is the unique expression in $S$. Let us show that this fact implies that $\frac{k}{2}\in \operatorname{msg}(S)$. Suppose that  
$\frac{k}{2}=s_1+s_2$ with $s_1,s_2\in S\setminus\{0,\frac{k}{2},k\}$. Therefore, $k=\frac{k}{2}+\frac{k}{2}=s_1+(s_2+s_1+s_2)$, contradicting the unique expression of $k$ in $S$. So, $\frac{k}{2}\in \operatorname{msg}(S)$ and thus $S\setminus\{\frac{k}{2}, k\}=T$ is a $\CaC$-semigroup. From the definition of be $k$-positioned, all the gaps of $S$ are smaller than $k$ with respect to $\leq_\CaC$, so we deduce that $\operatorname{F}(T)=k$. Let $h\in \CaH(T)\setminus\{\frac{k}{2}\}$, it follows that $h\in \CaH(S)$ and since $S$ is $\operatorname{F}(T)$-positioned then $\operatorname{F}(T)-h\in S\subset T$. Hence, $T$ is pseudo-symmetric and we conclude that $S=T\cup\{\operatorname{F}(T), \frac{\operatorname{F}(T)}{2}\}$ is a PEPSY-semigroup.

\noindent $3)$ implies $1)$.  Let $S=T\cup\{\operatorname{F}(T), \frac{\operatorname{F}(T)}{2}\}$, where $T$ is a pseudo-symmetric $\CaC$-semigroup and fix $k=\operatorname{F}(T)$. By using Proposition~\ref{caracPesudo}, we obtain that $\operatorname{g}(T)=\frac{|\operatorname{I}_\CaC(k)|+1}{2}$. Since $\operatorname{g}(S)=\operatorname{g}(T)-2$, then $\operatorname{g}(S)=\frac{|\operatorname{I}_\CaC(k)|-3}{2}$. Considering that $T$ is pseudo-symmetric, we have $k-h\in T\subset S $ for all $h\in \CaH(S)\subset \CaH(T)\setminus\{\frac{k}{2}\}$, which proves that $S$ is $k$-positioned.
\end{proof}

Based on the preceding proposition, we present this result.

\begin{theorem}\label{theo:PEPSY}
If $S=T\cup\{\operatorname{F}(T),\frac{\operatorname{F}(T)}{2}\}$ is a PEPSY-semigroup and $\frac{\operatorname{F}(T)}{2}\in \operatorname{C}(S)$, then $S$ is primary positioned.
\end{theorem}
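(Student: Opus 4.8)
The plan is to prove that the injective map $\phi_S\colon \operatorname{M}(S)\longrightarrow \operatorname{I}_\CaC(k)\setminus\operatorname{C}(S)$, $h\mapsto k-h$, furnished by Proposition~\ref{prop:map-MC} is in fact \emph{surjective}, where I set $k=\operatorname{F}(T)$; surjectivity is exactly the equality $|\operatorname{M}(S)|+|\operatorname{C}(S)|=|\operatorname{I}_\CaC(k)|$ that defines primary positioned. Two preliminary facts make $\phi_S$ available and set the stage: first, $S$ is $k$-positioned by Proposition~\ref{Prop:PEPSY&k-positioned}; second, since $k$ and $\tfrac{k}{2}$ are the pseudo-Frobenius elements of $T$ that are adjoined to form $S$, one has the disjoint decomposition $\CaH(T)=\CaH(S)\sqcup\{k,\tfrac{k}{2}\}$. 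The engine of the whole argument is one elementary observation, valid for any $\CaC$-semigroup: if $y\in T$ and $k-y\in\CaC$, then $k-y\in\CaH(T)$, for otherwise $k=y+(k-y)\in T$, contradicting $k=\operatorname{F}(T)\notin T$. This is the pseudo-symmetric analogue of the symmetric dichotomy used in Theorem~\ref{Theo:UESY}, and the proof will run closely parallel to that one.

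First I would fix $x\in\operatorname{I}_\CaC(k)\setminus\operatorname{C}(S)$ and record the constraints it satisfies. Since $\CaH(S)\subseteq\operatorname{C}(S)$, the element $x$ lies in $S$; since $\tfrac{k}{2}\in\operatorname{C}(S)$ (our hypothesis) there is at least one gap, hence $0\in\operatorname{C}(S)$ and so $x\neq 0$; and, crucially, the hypothesis $\tfrac{k}{2}\in\operatorname{C}(S)$ forces $x\neq\tfrac{k}{2}$. If $x=k$ then $k-x=0\in\operatorname{M}(S)$ and we are done, so assume $x\neq k$. Then $x\in S\setminus\{k,\tfrac{k}{2}\}=T$, and $k-x\in\CaC$ because $x\in\operatorname{I}_\CaC(k)$; the observation gives $k-x\in\CaH(T)=\CaH(S)\cup\{k,\tfrac{k}{2}\}$. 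The two exceptional values are excluded, since $k-x=k$ would give $x=0$ and $k-x=\tfrac{k}{2}$ would give $x=\tfrac{k}{2}$, both already ruled out. Hence $k-x\in\CaH(S)$.

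It then remains to upgrade $k-x\in\CaH(S)$ to $k-x\in\operatorname{M}(S)$, i.e. to check $\operatorname{I}_S(k-x)=\{0\}$. If not, I pick $z\in S\setminus\{0\}$ with $z\leq_\CaC k-x$; compatibility of $\leq_\CaC$ with addition yields $x\leq_\CaC k-z$ with $k-z\in\CaC$. The goal is to show $k-z\in\operatorname{C}(S)$, because transitivity of $\leq_\CaC$ then propagates $x\leq_\CaC k-z\leq_\CaC(\text{a gap})$ to $x\in\operatorname{C}(S)$, contradicting $x\notin\operatorname{C}(S)$. I locate $k-z$ by splitting on $z$: the case $z=k$ forces $x=0$ and is excluded; the case $z=\tfrac{k}{2}$ gives $k-z=\tfrac{k}{2}\in\operatorname{C}(S)$ directly by the hypothesis; and otherwise $z\in T\setminus\{0\}$, so the same observation yields $k-z\in\CaH(T)$, and excluding the values $k,\tfrac{k}{2}$ as before gives $k-z\in\CaH(S)\subseteq\operatorname{C}(S)$. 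In every case $x\in\operatorname{C}(S)$, a contradiction; hence $k-x\in\operatorname{M}(S)$ and $\phi_S(k-x)=x$, which establishes surjectivity.

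This shows $\phi_S$ is a bijection, so $S$ is primary positioned. The main obstacle, and precisely the reason the hypothesis $\tfrac{\operatorname{F}(T)}{2}\in\operatorname{C}(S)$ is imposed, is the interference of the adjoined element $\tfrac{k}{2}$: it is exactly what the hypothesis neutralizes, both in the step forcing $k-x\in\CaH(S)$ (where $x=\tfrac{k}{2}$ would otherwise escape into $\operatorname{I}_\CaC(k)\setminus\operatorname{C}(S)$ with no preimage) and in the subcase $z=\tfrac{k}{2}$ of the minimality argument. Indeed, when $\tfrac{k}{2}\notin\operatorname{C}(S)$ one expects $S$ may fail to be primary positioned, so the hypothesis is doing genuine work rather than being cosmetic. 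Apart from this, pseudo-symmetry of $T$ enters only to guarantee, via Proposition~\ref{Prop:PEPSY&k-positioned}, that $S$ is $k$-positioned and that $k,\tfrac{k}{2}$ are exactly the two adjoined gaps; the remainder is a faithful transcription of the UESY argument through the elementary $\operatorname{F}(T)$-observation.
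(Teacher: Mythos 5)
Your proof is correct and takes essentially the same route as the paper's: both show that the injective map $\phi_S$ of Proposition~\ref{prop:map-MC} is bijective by taking $x\in\operatorname{I}_\CaC(k)\setminus(\operatorname{C}(S)\cup\{k\})$, proving first that $k-x\in\CaH(S)$ and then that $k-x\in\operatorname{M}(S)$ via the same case analysis on $z\in\{k,\tfrac{k}{2}\}$ versus $z\in T\setminus\{0\}$, with the hypothesis $\tfrac{k}{2}\in\operatorname{C}(S)$ invoked in exactly the same two places. The only cosmetic difference is that you isolate the elementary fact $k=\operatorname{F}(T)\notin T$ as the engine of the argument, where the paper loosely attributes the corresponding steps to pseudo-symmetry.
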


\begin{proof}
Let $S=T \cup \{k,\frac{k}{2}\}$ where $T$ is pseudo-symmetric. By the proof of Proposition \ref{Prop:PEPSY&k-positioned} we have $S$ is $k$-positioned for $k=\operatorname{F}(T)$. Arguing as in Theorem \ref{Theo:UESY}, consider the injective map $\varphi_S$ given in Proposition \ref{prop:map-MC} and let us see that $\varphi_S$ is bijective. Trivially, if $x=k$ then $k-x=0\in\operatorname{M}(S)$. So, let $x\in \operatorname{I}_\CaC(k)\setminus (\operatorname{C}(S)\cup\{k\})$. We have to prove that $k-x\in \operatorname{M(S)}$. First, we show that $k-x\in \CaH(S)$. Assume $k-x\in S$, and we can distinguish three cases. If $k-x=k$, then $x=0$ which implies $x\in \operatorname{C}(S)$, a contradiction. If $k-x=\frac{k}{2}$, then $x=\frac{k}{2}$ and by hypothesis, we deduce that  $x\in \operatorname{C}(S)$, which is not possible. If $k-x\in T$, then $x$ belongs to $\CaH(T)\setminus\{k,\frac{k}{2}\}$, since $T$ is pseudo-symmetric. So, $x\in\CaH(S)\subset\operatorname{C}(S)$, obtaining again a contradiction. Therefore, $k-x\in \CaH(S)$. So, if we suppose $k-x\notin \operatorname{M}(S)$ it means that, there exists $z\in S\setminus\{0\}$ such that $z\leq_\CaC k-x$. In particular $x\leq_\CaC k-z$. Observe that if $z\in\{k,\frac{k}{2}\}$, then $x\in \operatorname{C}(S)$. So, $z\in T\setminus\{0\}$ and by the pseudo-symmetry property of $T$, we have $k-z\in \CaH(T)\setminus \{k,\frac{k}{2}\}=\CaH(S)$. Consequently, we obtain $x\in \operatorname{C}(S)$ which is false. This completes the proof.
\end{proof}

We notice in Example~\ref{ex:2UESY} that it is possible to find an UESY-semigroup that is primary positioned for two different elements $k_1,k_2\in \CaC$. A natural question that arises is whether we have the same result for some PEPSY- semigroups. That is, we ask if there exists a PEPSY-semigroup that is primary positioned for two different elements $k_1,k_2\in \CaC$. As far as the authors are aware, no examples are known. We leave the question as an open problem.

\section{Existence of primary positioned $\CaC$-semigroups}\label{Sec4}

This section aims to establish the existence of a GNS primary positioned  for a given $k \in \CaC$ such that $|\operatorname{I}_\CaC(k)| \notin \{3, 5,7\}$, while also showing that given some $k\in\CaC$ there does not exist a non-GNS primary positioned. To achieve these goals, we begin by analyzing the parity of $\operatorname{I}_\CaC(k)$.

\begin{proposition}\label{prop:even-pp}
Let $\CaC$ be a positive integer cone and let $k\in\CaC$. If $|\operatorname{I}_\CaC(k)|$ is even, then there exists a primary positioned $\CaC$-semigroup for $k$.
\end{proposition}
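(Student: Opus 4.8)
The plan is to reduce the statement to the construction of a symmetric $\CaC$-semigroup whose Frobenius element is exactly $k$, and then to let the machinery of the previous section do the rest. Indeed, if I can produce a symmetric $\CaC$-semigroup $T$ with $\operatorname{F}(T)=k$, then $S=T\cup\{k\}$ is a UESY-semigroup by Proposition~\ref{Prop:symm&k-positioned}, and Theorem~\ref{Theo:UESY} (whose proof verifies the bijectivity of the map $\phi_S$ precisely for the value $k=\operatorname{F}(T)$) shows that $S$ is primary positioned for this very $k$. So the whole problem becomes: build a symmetric $T$ with prescribed Frobenius element $k$.

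First I would record a parity dichotomy. The map $\sigma\colon \operatorname{I}_\CaC(k)\to \operatorname{I}_\CaC(k)$ given by $\sigma(x)=k-x$ is a well-defined involution, so $|\operatorname{I}_\CaC(k)|$ has the parity of the number of its fixed points, and the only candidate fixed point is $k/2$. The key observation, and the point that makes the even case uniform, is that $k/2\in\operatorname{I}_\CaC(k)$ as soon as every coordinate of $k$ is even: writing $k=\sum q_ia_i\in\operatorname{cone}(A)$ gives $k/2=\sum(q_i/2)a_i\in\operatorname{cone}(A)$, and $k/2\in\mathbb{N}^d$, whence $k/2\in\CaC$ with $k/2\leq_\CaC k$. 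Thus if $|\operatorname{I}_\CaC(k)|$ is even there is no fixed point, and therefore some coordinate $k^{(i)}$ must be odd. This is the step I expect to be the conceptual crux: one might fear that a separate, harder construction is needed when $k\in 2\mathbb{Z}^d$, but the closure of the cone under halving shows that this situation never yields an even $|\operatorname{I}_\CaC(k)|$, so a single parity-based construction will suffice.

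With an index $i$ such that $k^{(i)}$ is odd in hand, I would define the homomorphism $\chi\colon\mathbb{Z}^d\to\mathbb{Z}/2\mathbb{Z}$ by $\chi(x)=x^{(i)}\bmod 2$, so that $\chi(k)=1$, and set
$$T=\{x\in\CaC : \chi(x)=0\}\cup\{x\in\CaC : x\not\leq_\CaC k\},$$
whose complement in $\CaC$ is $\CaH(T)=\{x\in\operatorname{I}_\CaC(k) : \chi(x)=1\}$. The verifications are then routine. For closure under addition, take $a,b\in T$: if $a+b\not\leq_\CaC k$ then $a+b\in T$ by definition, while if $a+b\leq_\CaC k$ then necessarily $a,b\leq_\CaC k$, forcing $\chi(a)=\chi(b)=0$ and hence $\chi(a+b)=0$, so again $a+b\in T$. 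Since $0\in T$ and $\CaH(T)\subseteq\operatorname{I}_\CaC(k)$ is finite, $T$ is a $\CaC$-semigroup; moreover $k\in\CaH(T)$ and every gap is $\leq_\CaC k$, hence $\preceq k$ for any term order, so $\operatorname{F}_\preceq(T)=k$ for every term order.

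Finally I would confirm symmetry via Proposition~\ref{caracSymm}(2): for $h\in\CaH(T)$ we have $k-h\in\CaC$ and $\chi(k-h)=\chi(k)+\chi(h)=0$, so $k-h\in T$. Thus $T$ is a symmetric $\CaC$-semigroup with $\operatorname{F}(T)=k$, and $S=T\cup\{k\}$ is the desired primary positioned $\CaC$-semigroup for $k$. The only genuinely new ingredient is the parity observation of the second paragraph; everything after it is a direct, if careful, check that the explicit $T$ does what is required.
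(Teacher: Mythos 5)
Your proof is correct, and its core is genuinely different from the paper's. The paper gets the symmetric semigroup $T$ with $\operatorname{F}(T)=k$ by citing an external result (\cite[Algorithm 1]{garcia-on-some}), which produces an \emph{irreducible} $\CaC$-semigroup with Frobenius element $k$; the even parity of $|\operatorname{I}_\CaC(k)|$ then rules out the pseudo-symmetric alternative (via Proposition~\ref{caracPesudo}, whose genus formula would force $|\operatorname{I}_\CaC(k)|$ odd), so $T$ must be symmetric, and the conclusion follows from Theorem~\ref{Theo:UESY} exactly as in your last step. You replace the black-box citation by two self-contained ingredients: (i) the involution $x\mapsto k-x$ on $\operatorname{I}_\CaC(k)$, whose unique possible fixed point is $k/2$, combined with the observation that rational cones are closed under halving, shows that even $|\operatorname{I}_\CaC(k)|$ forces some coordinate $k^{(i)}$ to be odd --- this is in fact a two-sided sharpening of the paper's Lemma~\ref{lem:k/2}, proved there only in the direction needed for the odd case; and (ii) the explicit parity-character semigroup $T=\{x\in\CaC:\chi(x)=0\}\cup\{x\in\CaC:x\not\leq_\CaC k\}$, whose closure under addition, finiteness of gaps, Frobenius element, and symmetry you verify directly (all checks are sound; in particular the step ``$a+b\leq_\CaC k$ forces $a,b\leq_\CaC k$'' is justified because $k-a=(k-a-b)+b\in\CaC$). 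What the paper's route buys is brevity and uniformity with the rest of its toolkit; what yours buys is a fully constructive, citation-free proof that exhibits a concrete witness, and it makes visible \emph{why} evenness is exactly the right hypothesis, rather than delegating that to the genus formulas behind the irreducibility machinery.
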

\begin{proof}
Suppose $|\operatorname{I}_{\CaC}(k)|$ is even. By \cite[Algorithm 1]{garcia-on-some} it is possible to produce an irreducible $\CaC$-semigroup $T$ such that $\operatorname{F}(T)=k$. Furthermore, applying Proposition \ref{caracSymm} we have that $T$ is symmetric. Consider the UESY-semigroup $S=T\cup\{\operatorname{F}(T)\}$, using Theorem \ref{Theo:UESY} it follows that $S$ is primary positioned for $k$.
\end{proof}

Before addressing the case when $ |\operatorname{I}_\CaC(k)|$ is an odd number, we first present the following remark regarding the case when $|\operatorname{I}_{\CaC}(k)| = 1$, and a technical lemma.

\begin{remark}\label{rem:k=0}
 If $|\operatorname{I}_{\CaC}(k)|=1$ then $k=0\in\CaC$. Consider $S=\CaC$. Since $|\operatorname{M}(S)|=1$ and $|\operatorname{C}(S)|=0$, we conclude that $S$ is primary positioned for $k=0$.
\end{remark}

\begin{lemma}\label{lem:k/2}
    Let $\CaC$ be an integer cone and let $k\in \CaC$. If 
    $|\operatorname{I}_{\CaC}(k)|$ is odd, then $\frac{k}{2}\in \CaC$.
\end{lemma}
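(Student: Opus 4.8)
The plan is to exploit the natural symmetry of the set $\operatorname{I}_\CaC(k)$ under the reflection $x\mapsto k-x$ and combine it with a parity (orbit-counting) argument. Recall that $\operatorname{I}_\CaC(k)=\{x\in\CaC\mid x\leq_\CaC k\}$, and that $x\leq_\CaC k$ means precisely $k-x\in\CaC$. So membership in $\operatorname{I}_\CaC(k)$ is a symmetric condition on the pair $(x,k-x)$: it records exactly those $x\in\CaC$ whose ``complement'' $k-x$ also lies in $\CaC$.

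First I would define the map $\sigma\colon\operatorname{I}_\CaC(k)\to\operatorname{I}_\CaC(k)$ by $\sigma(x)=k-x$ and check it is well-defined: if $x\in\operatorname{I}_\CaC(k)$, then $x\in\CaC$ and $k-x\in\CaC$, so $\sigma(x)=k-x\in\CaC$ and $k-\sigma(x)=x\in\CaC$, whence $\sigma(x)\in\operatorname{I}_\CaC(k)$. Clearly $\sigma(\sigma(x))=x$, so $\sigma$ is an involution on the finite set $\operatorname{I}_\CaC(k)$. Since an involution on a finite set decomposes that set into fixed points together with two-element orbits $\{x,\sigma(x)\}$ (with $x\neq\sigma(x)$), the cardinality $|\operatorname{I}_\CaC(k)|$ is congruent modulo $2$ to the number of fixed points of $\sigma$.

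Next I would use the hypothesis that $|\operatorname{I}_\CaC(k)|$ is odd. By the parity count above, the number of fixed points of $\sigma$ is also odd, and in particular $\sigma$ has at least one fixed point. A fixed point is an element $x\in\operatorname{I}_\CaC(k)$ with $\sigma(x)=x$, that is, $k-x=x$, equivalently $2x=k$. Such an $x$ therefore satisfies $x=\frac{k}{2}$, and since $x\in\operatorname{I}_\CaC(k)\subseteq\CaC$, we conclude $\frac{k}{2}\in\CaC$, as required.

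I do not expect a genuine obstacle here; the argument is short and essentially combinatorial once the involution is identified. The only points warranting care are the verification that $\sigma$ maps $\operatorname{I}_\CaC(k)$ into itself (which relies on the symmetric defining condition $k-x\in\CaC$) and the elementary observation that a fixed-point-free involution forces an even cardinality, so that odd cardinality guarantees a fixed point. The fixed point is then forced to be $\frac{k}{2}$, which automatically lies in $\CaC$ because it is a member of $\operatorname{I}_\CaC(k)$.
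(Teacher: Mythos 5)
Your proof is correct, and it takes a genuinely more elementary route than the paper's. Both arguments hinge on the same underlying symmetry---the map $x \mapsto k-x$ is a well-defined involution of $\operatorname{I}_{\CaC}(k)$---but you extract the fixed point by pure orbit counting: two-element orbits pair up, so odd cardinality forces a fixed point, which is necessarily $\frac{k}{2}$, and it lies in $\CaC$ because it lies in $\operatorname{I}_{\CaC}(k)$. The paper instead fixes a term order $\preceq$, invokes \cite[Lemma 3.1]{cisto-tenorio} to conclude that, writing $\operatorname{I}_{\CaC}(k)=\{0=x_1\prec x_2\prec\cdots\prec x_r=k\}$, one has $x_i=k-x_{r+1-i}$ for all $i$ (the involution reverses the term order), and then observes that the middle element of this chain must be its own complement. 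Your argument is shorter and self-contained: it needs neither a term order nor the external citation, and it absorbs the degenerate case $|\operatorname{I}_{\CaC}(k)|=1$ without separate treatment. What the paper's approach buys is a stronger structural by-product: the explicit order-reversing pairing of $\operatorname{I}_{\CaC}(k)$, in particular that $\frac{k}{2}$ is exactly the $\preceq$-middle element and that $h\prec\frac{k}{2}\prec k-h$ for every non-fixed pair. This extra information is reused later---the proofs of Proposition \ref{prop:primary-odd} and Theorem \ref{Thrm:FirstStep} appeal to ``the proof of Lemma \ref{lem:k/2}'' for this ordering, not merely to its statement---so if your proof were substituted in, those later arguments would need to re-derive the order-reversal fact separately.
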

\begin{proof}
Trivially, if $|\operatorname{I}_{\CaC}(k)|=1$ then $k=0\in \CaC$ and thus $\frac{k}{2}=0\in \CaC$. Assume that $|\operatorname{I}_{\CaC}(k)|=r$ where $r\geq 3$ is an odd number and fix a term order $\preceq$. So, 
    \[
    \operatorname{I}_{\CaC}(k)=\big\{0=x_1\prec\ldots \prec x_{\frac{r-1}{2}-1}\prec x_{\frac{r-1}{2}}\prec x_{\frac{r-1}{2}+1} \prec \ldots \prec x_r=k\big\}.
    \]
    By definition,  $x\in \operatorname{I}_{\CaC}(k)$ if and only if $x+z=k$, for some $z\in \CaC$, equivalently $z\in \operatorname{I}_{\CaC}(k)$. Since we are under the hypothesis of 
    \cite[Lemma 3.1]{cisto-tenorio}, we obtain that,
\begin{multline*}
    \operatorname{I}_{\CaC}(k)= \left \lbrace 0=x_1\prec x_2\prec\ldots 
    \prec x_{\frac{r-1}{2}-1}
    \prec x_{\frac{r-1}{2}}=k-x_{\frac{r-1}{2}}\prec \right.\\
    \left. x_{\frac{r-1}{2}+1}=k- x_{\frac{r-1}{2}-1} \prec \ldots 
    \prec x_{r-1}=k- x_1
    \prec x_r=k \right \rbrace.
\end{multline*}
In particular, $x_{\frac{r-1}{2}}=k-x_{\frac{r-1}{2}}$, so $x_{\frac{r-1}{2}}=\frac{k}2$. Hence, $\frac{k}{2}\in \CaC$.
\end{proof}

\begin{proposition}\label{prop:primary-odd}
There not exists any primary positioned $\CaC$-semigroup for $k\in \CaC$ such that $|\operatorname{I}_\CaC(k)|\in\{3,5,7\}$.    
\end{proposition}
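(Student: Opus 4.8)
The plan is to prove a non-existence statement, so I would argue by contradiction: suppose $S$ is primary positioned for some $k\in\CaC$ with $|\operatorname{I}_\CaC(k)|\in\{3,5\}$, and derive an impossibility. The fundamental tool is the upper bound on the genus from Proposition~\ref{prop:upbound}, namely $\operatorname{g}(S)\le \frac{|\operatorname{I}_\CaC(k)|-2}{2}$, combined with the defining equality $|\operatorname{M}(S)|+|\operatorname{C}(S)|=|\operatorname{I}_\CaC(k)|$ for primary positioned semigroups. A central observation I would use is that $|\operatorname{C}(S)|\ge \operatorname{g}(S)$ always holds, since $\CaH(S)\subseteq\operatorname{C}(S)$ by definition of $\operatorname{C}(S)$ (every gap is $\leq_\CaC$ itself, hence lies in $\operatorname{C}(S)$). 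Likewise $|\operatorname{M}(S)|\ge 2$ whenever $S\ne\CaC$, by Proposition~\ref{prop:M(S)=0} (since $\operatorname{M}(S)\supseteq\{0\}$ and $\operatorname{M}(S)\ne\{0\}$ forces at least one further element). These two lower bounds will clash with the smallness of $|\operatorname{I}_\CaC(k)|$.

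First I would dispose of the degenerate possibility $S=\CaC$. If $S=\CaC$ then $\CaH(S)=\emptyset$, so $\operatorname{C}(S)=\emptyset$ and $\operatorname{M}(S)=\{0\}$; then $|\operatorname{M}(S)|+|\operatorname{C}(S)|=1$, which forces $|\operatorname{I}_\CaC(k)|=1$, contradicting $|\operatorname{I}_\CaC(k)|\in\{3,5\}$. Hence $S\ne\CaC$, so $\operatorname{g}(S)\ge 1$ and, by Proposition~\ref{prop:M(S)=0}, $|\operatorname{M}(S)|\ge 2$. Now for the case $|\operatorname{I}_\CaC(k)|=3$: Proposition~\ref{prop:upbound} gives $\operatorname{g}(S)\le \frac{3-2}{2}=\frac12$, so $\operatorname{g}(S)=0$, i.e. $S=\CaC$, contradicting what we just established. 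This settles the value $3$ cleanly.

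The remaining case $|\operatorname{I}_\CaC(k)|=5$ is the one I expect to require the real work. Here Proposition~\ref{prop:upbound} only gives $\operatorname{g}(S)\le \frac{3}{2}$, so $\operatorname{g}(S)=1$ (using $S\ne\CaC$). The primary-positioned equality reads $|\operatorname{M}(S)|+|\operatorname{C}(S)|=5$, and since $\CaH(S)\subseteq\operatorname{C}(S)$ we have $|\operatorname{C}(S)|\ge\operatorname{g}(S)=1$, while $|\operatorname{M}(S)|\ge 2$. With $\operatorname{g}(S)=1$, write $\CaH(S)=\{h\}$ for the single gap. Then $h$ is the unique maximal gap, so by Corollary~\ref{unique-Fb} it is the Frobenius element independent of the term order, and $\operatorname{C}(S)=\{x\in\CaC : x\leq_\CaC h\}=\operatorname{I}_\CaC(h)$. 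Moreover $\operatorname{M}(S)=\{0,h\}$ because $h$ is the only gap and $\operatorname{I}_S(h)=\{0\}$ (any nonzero $s\in S$ with $s\leq_\CaC h$ would give $h-s\in\CaC\setminus\{0\}$; if $h-s\in S$ this expresses the single gap reducibly, and one checks this forces $h\in S$). Thus $|\operatorname{M}(S)|=2$, and the equality forces $|\operatorname{C}(S)|=3$, i.e. $|\operatorname{I}_\CaC(h)|=3$.

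The contradiction then comes from a parity argument. Since $|\operatorname{I}_\CaC(k)|=5$ is odd, Lemma~\ref{lem:k/2} gives $\frac{k}{2}\in\CaC$; moreover, as $S$ is $k$-positioned we must have $\frac{k}{2}\in S$ (otherwise $\frac{k}{2}\in\CaH(S)$ would force $k-\frac{k}{2}=\frac{k}{2}\in S$, a contradiction), so in fact $\frac{k}{2}=s$ for some $s\in\operatorname{I}_S(k)$. The hard part will be pinning down the structure of $\operatorname{I}_\CaC(k)$ when $|\operatorname{I}_\CaC(k)|=5$: writing $\operatorname{I}_\CaC(k)=\{0,a,\tfrac{k}{2},k-a,k\}$ via the symmetric pairing from the proof of Lemma~\ref{lem:k/2}, I would analyze which of $a,\frac{k}{2},k-a$ lie in $S$ and which are gaps, using that $S$ is $k$-positioned (so the gap set is closed under $h\mapsto$ its complement in the pairing) and that $\operatorname{g}(S)=1$. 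Since $\operatorname{g}(S)=1$ means exactly one of these three interior elements is a gap, and the $k$-positioned condition pairs each gap $h$ with $k-h\in S$, the unique gap $h$ satisfies $k-h\in S$; combined with $|\operatorname{I}_\CaC(h)|=3$ derived above and the fact that $h\leq_\CaC k$ places $\operatorname{I}_\CaC(h)$ inside the five-element chain, I would show the cardinalities cannot be reconciled — concretely, $|\operatorname{I}_\CaC(h)|=3$ together with $h$ being an interior element of a length-$5$ symmetric chain forces $h=\frac{k}{2}$, but $\frac{k}{2}\in S$, so $h$ cannot be the gap, exhausting all cases. This final bookkeeping is the main obstacle and is where I would spend the most care, but it is a finite case check rather than a deep difficulty.
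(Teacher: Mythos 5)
Your setup is sound, and almost everything you establish is correct: the disposal of $S=\CaC$, the case $|\operatorname{I}_\CaC(k)|=3$ via Proposition~\ref{prop:upbound}, and, in the case $|\operatorname{I}_\CaC(k)|=5$, the facts $\operatorname{g}(S)=1$, $\operatorname{M}(S)=\{0,h\}$ (your argument that no nonzero $s\in S$ can satisfy $s\leq_\CaC h$ is correct), $\operatorname{C}(S)=\operatorname{I}_\CaC(h)$, and hence that the primary positioned equality forces $|\operatorname{I}_\CaC(h)|=3$. The genuine gap is the final paragraph, precisely the step you yourself flag as ``the main obstacle'': the claim that $|\operatorname{I}_\CaC(h)|=3$ together with $h$ being an interior element of the chain $\{0,a,\tfrac{k}{2},k-a,k\}$ ``forces $h=\tfrac{k}{2}$'' is asserted, not proved, and it is not pure bookkeeping. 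For $h=a$ the count $|\operatorname{I}_\CaC(a)|\leq 2$ is immediate, but for $h=k-a$ nothing in the chain structure alone excludes $|\operatorname{I}_\CaC(k-a)|=3$, which would occur exactly when $a\leq_\CaC k-a$ but $\tfrac{k}{2}\not\leq_\CaC k-a$; ruling that configuration out needs an actual argument (for instance: $k-2a\in\CaC$ together with the evenness of all coordinates of $k$, guaranteed by Lemma~\ref{lem:k/2}, gives $\tfrac{k}{2}-a=\tfrac{1}{2}(k-2a)\in\mathbb{N}^d\cap\mathrm{cone}(A)=\CaC$, so the two conditions are equivalent and $|\operatorname{I}_\CaC(k-a)|\in\{2,4\}$, never $3$). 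As written, the proof is incomplete at its decisive step.

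The irony is that you do not need that paragraph at all: the contradiction is already contained in what you proved. Your argument for $\operatorname{M}(S)=\{0,h\}$ shows that no nonzero element of $S$ lies $\leq_\CaC$-below $h$; since the only gap is $h$ itself, this gives $\operatorname{C}(S)=\operatorname{I}_\CaC(h)=\{0,h\}$, i.e.\ $|\operatorname{C}(S)|=2$, flatly contradicting the forced value $|\operatorname{C}(S)|=3$ (equivalently, $|\operatorname{M}(S)|+|\operatorname{C}(S)|=4\neq 5$). With that two-line repair your proof closes, and it is organized somewhat differently from the paper's: the paper never isolates $\operatorname{g}(S)=1$, but instead analyses which elements of the chain $\{0\prec h\prec\tfrac{k}{2}\prec k-h\prec k\}$ can be gaps and derives contradictions case by case from the bijectivity of the map $\phi_S$ of Proposition~\ref{prop:map-MC}; your route through $\operatorname{M}(S)$ and $\operatorname{C}(S)$ is arguably cleaner once the final step is fixed, but as submitted it has a real hole exactly where the work had to happen.
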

\begin{proof}
Suppose that $S$ is primary positioned and $|\operatorname{I}_\CaC(k)|=n \in\{3,5,7\}$. In particular, $|\operatorname{M}(S)|+|\operatorname{C}(S)|=n$. By Lemma \ref{lem:k/2}, it follows that $\frac{k}{2}\in \CaC$. Observe  that $\frac{k}{2}\in S$, since $k-\frac{k}{2}=\frac{k}{2}$ and $S$ is $k$-positioned.   First, suppose $|\operatorname{M}(S)|=1$. By Proposition~\ref{prop:M(S)=0} we obtain $S=\mathcal{C}$, which implies $|\operatorname{C}(S)|=0$, a contradiction. So, we can assume  $|\operatorname{M}(S)|>1$. In this case $\mathcal{H}(S)\neq \emptyset$ and $\operatorname{M}(S)\subseteq \operatorname{C}(S)$, which implies $|\operatorname{M}(S)|\leq |\operatorname{C}(S)|$. Notice also that $\CaH(S)\subseteq\operatorname{I}_\CaC(k)\setminus\{0,\frac{k}{2}, k\}$. Hence, the case $n=3$ is not possible. 

Suppose $n=5$. Let $\preceq$ be a term order, by the proof of Lemma \ref{lem:k/2} it follows that $\operatorname{I}_\CaC(k)=\{0\prec h\prec\frac{k}{2}\prec k-h\prec k\}$, where $h\in \CaH(S)$ or $k-h\in \CaH(S)$. Additionally, note that it is not possible that $\{h,k-h\}=\CaH(S)$, since $S$ is $k$-positioned. Hence, the only possibility is $\operatorname{M}(S)=\{0,g\}$, with $g\in \{h,k-h\}$ and $\mathcal{H}(S)=\{g\}$. In particular, if there exists $x\in \operatorname{C}(S)$ such that $x\notin \operatorname{M}(S)$, then $x\leq_\mathcal{C} g$ and $x\in S\{0\}$, that contradicts $g\in \operatorname{M}(S)$. It follows $\operatorname{M}(S)=\operatorname{C}(S)$ and $|\operatorname{M}(S)|+|\operatorname{C}(S)|=2+2\neq 5$, a contradiction. Hence, also the case $n=5$ is not possible.

Suppose $n=7$. Let $\preceq$ be a term order, by the proof of Lemma \ref{lem:k/2} it follows that $\operatorname{I}_\CaC(k)=\{0\prec h_1 \prec h_2 \prec\frac{k}{2}\prec k-h_2 \prec k-h_1\prec k\}$, where $h_i\in \CaH(S)$ or $k-h_i\in \CaH(S)$, for $i\in \{1,2\}$. Additionally, since $S$ is $k$-positioned, it is not possible that $\{h_i,k-h_i\}\subseteq \CaH(S)$ for each $i\in \{1,2\}$. So, we deduce that $\operatorname{g}(S)\leq 2$ and thus $|\operatorname{M}(S)|\in \{2,3\}$. Suppose $\operatorname{M}(S)=\{0,g_1,g_2\}$, then $\mathcal{H}(S)=\{g_1,g_2\}$. Using a similar argument as before, if there exists $x\in \operatorname{C}(S)$ such that $x\notin \operatorname{M}(S)$, then $x\leq_\mathcal{C} g_i$, for some $i\in \{1,2\}$, and $x\in S\setminus\{0\}$. This fact contradicts $g_i\in \operatorname{M}(S)$. Therefore, the only possibility is $|\operatorname{M}(S)|=2$. Assume $\operatorname{M}(S)=\{0,g_1\}$. Observe that the case $\operatorname{g}(S)=1$ is not possible in this situation, otherwise $\operatorname{M}(S)=\operatorname{C}(S)=\{0,g_1\}$. So, we can state $\mathcal{H}(S)=\{g_1,g_2\}$. Since $g_2\notin \operatorname{M}(S)$ and $|\operatorname{C}(S)|=5$ then, there exists $s\in S\setminus \{0\}$ such that $s\leq_\mathcal{C} g_2$. In particular, $s\in \operatorname{C}(S)\subseteq \operatorname{I}_\mathcal{C}(k)$. So, $s\in \{\frac{k}{2},k-g_1,k-g_2\}$. If $s=k-g_1$, then $k-g_1\leq_\mathcal{C} g_2$, obtaining $k-g_2\leq_\mathcal{C} g_1$, a contradiction by $k-g_2\in S\setminus \{0\}$ and $g_1\in \operatorname{M}(S)$. If $s=\frac{k}{2}$, by $\frac{k}{2}\leq_\mathcal{C} g_2$ we obtain $k-g_2\leq_\mathcal{C} \frac{k}{2}\leq_\mathcal{C} g_2$, that is, $k-g_2\in \operatorname{C}(S)$. If $s=k-g_2$, by $k-g_2\leq_\mathcal{C} g_2$ we obtain $ \frac{k}{2} \leq_\mathcal{C} g_2$, that is, $\frac{k}{2}\in \operatorname{C}(S)$. So, since $|\operatorname{C}(S)|=5$, in every case, the only possibility is $\operatorname{C}(S)=\{0,g_1,g_2, \frac{k}{2},k-g_2\}$. Now, suppose $g_1\in \operatorname{SG}(S)$. Then $T=S\cup \{g_1\}=\mathcal{C}\setminus \{g_2\}$ is a $\mathcal{C}$-semigroup with $\operatorname{M}(T)=\{0\}$ and $T\neq \mathcal{C}$, a contradiction by Proposition~\ref{prop:M(S)=0}. Therefore, $g_1\notin \operatorname{SG}(S)$. It follows, $2g_1=g_2$ or $g_1+t=g_2$ for some $t\in \{k-g_2, \frac{k}{2}\}$. So, $\operatorname{I}_\mathcal{C}(g_2)=\{0,g_1, k-g_2,\frac{k}{2},g_2\}$. First, observe that if $g_1+k-g_2=g_2$, then $g_2-\frac{k}{2}=\frac{g_1}{2}$ and since $\frac{k}{2}\leq _\mathcal{C} g_2$, we obtain $\frac{g_1}{2}\in \mathcal{C}$. But this implies $0 \neq \frac{g_1}{2}\leq_ \mathcal{C} g_1$, which contradicts $g_1\in \operatorname{M}(S)$ and the fact that there is no gap smaller than $g_1$. So, we have to examine the following cases: 
\begin{itemize}
\item Suppose $g_2=2g_1$. In this case, by the proof of Lemma~\ref{lem:k/2}, we deduce $\operatorname{I}_\mathcal{C}(g_2)=\{0 \prec k-g_2 \prec g_1 \prec \frac{k}{2} \prec g_2\}$ or $\operatorname{I}_\mathcal{C}(g_2)=\{0 \prec \frac{k}{2} \prec g_1 \prec k-g_2 \prec g_2\}$. Hence, in any case we have $g_2-\frac{k}{2}=k-g_2$, that is, $g_2=\frac{3k}{4}$. As a consequence $k-g_2=\frac{k}{4}\in S$, obtaining $\frac{k}{4}+\frac{k}{4}+\frac{k}{4}=g_2\in S$, a contradiction.
\item Suppose $g_2=g_1+\frac{k}{2}$. In this case, by the proof of Lemma~\ref{lem:k/2}, we deduce $\operatorname{I}_\mathcal{C}(g_2)=\{0 \prec g_1 \prec k-g_2 \prec \frac{k}{2} \prec g_2\}$ or $\operatorname{I}_\mathcal{C}(g_2)=\{0 \prec \frac{k}{2} \prec k-g_2 \prec g_1 \prec g_2\}$. Hence, in any case we have $g_2=2(k-g_2)$, that is, $g_2=\frac{2k}{3}$. As a consequence $k-g_2=\frac{k}{3}\in S$, obtaining $\frac{k}{3}+\frac{k}{3}=g_2\in S$, a contradiction.
\end{itemize}
We obtained a contradiction in every case, so also the equality $|\operatorname{M}(S)|=2$ is not possible. This concludes the proof.
\end{proof}

As seen in the previous proposition, we have considered the case when $|\operatorname{I}_\CaC(k)|$ takes values in $\{3, 5,7\}$. However, for $|\operatorname{I}_\CaC(k)| > 7$ the situation is still unsolved. To discuss this case, we study the following class of $\CaC$-semigroups given in the example below.

\begin{example}
    Let $\lambda$ be an even positive integer and $\CaC_\lambda \subseteq \mathbb{N}^2$ be the positive integer cone spanned by the set $\{(1,0),(1,\lambda)\}$. Consider $k=(2,\lambda)$. So, 
    \[\operatorname{I}_{\CaC_\lambda}(k)=\{(1,i)\in \mathbb{N}^2\mid 0\leq i\leq \lambda\} \cup \{(0,0),(2,\lambda)\},\]
    and thus $|\operatorname{I}_{\CaC_\lambda}(k)|=\lambda+3$. Observe that for every $i,j\in \{0,\ldots,\lambda\}$ such that $i<j$ we have $(1,i)\nleq_{\CaC_\lambda} (1,j)$. 
    Suppose that $S$ is a primary positioned $\CaC_\lambda$-semigroup for $k$. By definition $\operatorname{M}(S)\subseteq \operatorname{C}(S)$. If $h\in \operatorname{C}(S)\setminus \operatorname{M}(S)$, then there exists $z\in S\setminus \{0\}$ such that $z<_{\CaC_\lambda} h$. By construction, $h,z\in \operatorname{I}_{\CaC_\lambda}(k)\setminus \{0,k\}$, which is impossible since as we have seen all the elements in the set $\operatorname{I}_{\CaC_\lambda}(k)\setminus \{0,k\}$ are not comparable each other with respect to the partial order $\leq_{\CaC_\lambda}$. As a consequence,  $\operatorname{C}(S)=\operatorname{M}(S)$, and thus $|\operatorname{I}_{\CaC_\lambda}(k)|$ is even, which contradicts the parity of the cardinality of the set. 
\end{example}

Given any positive integer cone $\CaC$ and $k \in \CaC$, as we have observed in the example, it is not possible to construct in general a $\CaC$-semigroup that is primary positioned for $k$. The key reason why this is not feasible lies in the properties of the set $\operatorname{I}_{\CaC_\lambda}(k)$. Specifically, for some $x, y \in \operatorname{I}_{\CaC_\lambda}(k) \setminus \{0, k\}$, it must hold that $x \leq_\CaC y$ or $y \leq_\CaC x$. Consequently, if we take the cone $\CaC = \mathbb{N}^d$, this property is satisfied, allowing us to state the following theorem.

\begin{theorem}\label{Theo:GNSprimarypositioned}
Let $k\in \mathbb{N}^d$. There exists a primary positioned GNS for $k$ if and only if $k\notin \{2e_i, 4e_i, 6e_i\mid i \in \{1,\ldots,d\}\}$.
\end{theorem}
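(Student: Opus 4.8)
The plan is to pass from the geometric description $\{2e_i,4e_i\}$ to a numerical one. Since $\operatorname{I}_\CaC(k)$ is the box $\prod_{i=1}^d\{0,1,\dots,k^{(i)}\}$, we have $|\operatorname{I}_\CaC(k)|=\prod_{i=1}^d(k^{(i)}+1)$. As $3$ and $5$ are prime, a product of the integers $k^{(i)}+1$ equals $3$ (resp. $5$) exactly when one factor is $3$ (resp. $5$) and the others are $1$, that is, exactly when $k=2e_i$ (resp. $k=4e_i$) for some $i$. Hence $k\in\{2e_i,4e_i\mid i\in\{1,\dots,d\}\}$ if and only if $|\operatorname{I}_\CaC(k)|\in\{3,5\}$, and the statement becomes: a primary positioned GNS for $k$ exists if and only if $|\operatorname{I}_\CaC(k)|\notin\{3,5\}$. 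The necessity is then immediate, since if $|\operatorname{I}_\CaC(k)|\in\{3,5\}$ then Proposition \ref{prop:primary-odd} already forbids any primary positioned $\CaC$-semigroup, in particular any GNS.

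For sufficiency I would split on the parity and size of $|\operatorname{I}_\CaC(k)|$. If $|\operatorname{I}_\CaC(k)|$ is even, Proposition \ref{prop:even-pp} produces a primary positioned $\mathbb{N}^d$-semigroup, i.e. a GNS, and nothing more is needed. If $|\operatorname{I}_\CaC(k)|=1$ then $k=0$ and Remark \ref{rem:k=0} applies. This leaves the genuine case: $|\operatorname{I}_\CaC(k)|$ odd with $|\operatorname{I}_\CaC(k)|\ge 7$.

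Here the natural route is through PEPSY-semigroups and Theorem \ref{theo:PEPSY}. Since $|\operatorname{I}_\CaC(k)|$ is odd, Lemma \ref{lem:k/2} gives $\frac k2\in\mathbb{N}^d$, and an irreducible GNS with Frobenius element $k$ is forced to be pseudo-symmetric (a symmetric one would make $|\operatorname{I}_\CaC(k)|=2\operatorname{g}$ even, by Proposition \ref{caracPesudo} and the count $|\operatorname{I}_\CaC(k)|=\operatorname{g}+|\operatorname{I}_S(k)|$). So I would first build a pseudo-symmetric GNS $T$ with $\operatorname{F}(T)=k$ (for instance via \cite[Algorithm 1]{garcia-on-some}) and then set $S=T\cup\{k,\frac k2\}$. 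By Theorem \ref{theo:PEPSY} it suffices to arrange that $\frac k2\in\operatorname{C}(S)$, i.e. that $T$ has a gap $h$ with $\frac k2<_\CaC h<_\CaC k$. That such an $h$ can exist in $\operatorname{I}_\CaC(k)$ at all follows from $|\operatorname{I}_\CaC(k)|\ge 7$: writing $k^{(i)}=2a_i$, one has $\prod_i(a_i+1)=|\operatorname{I}_\CaC(\frac k2)|\ge 3$, because the values $1$ and $2$ occur only for $k=0$ and $k=2e_i$, which are excluded here. Thus there is $w\in\mathbb{N}^d$ with $0<_\CaC w<_\CaC\frac k2$, and $h:=\frac k2+w$ satisfies $\frac k2<_\CaC h<_\CaC k$.

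The step I expect to be the main obstacle is realizing such an $h$ as a gap of a \emph{pseudo-symmetric} $T$. The difficulty is that the involution $x\mapsto k-x$ on $\operatorname{I}_\CaC(k)$ forces, for a pseudo-symmetric $T$, exactly one of $\{x,k-x\}$ to be a gap for every pair $x\ne\frac k2$; and for the specific pair $\{h,k-h\}=\{\frac k2+w,\;\frac k2-w\}$ one has $k-h\prec h$ for \emph{every} term order, since $h-(k-h)=2w\in\mathbb{N}^d\setminus\{0\}$. Hence the ``canonical'' term-order construction of a pseudo-symmetric semigroup always places $h$ inside $T$ and makes $k-h$ the gap, never the other way around. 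So I would have to produce a pseudo-symmetric $T$ that deviates from the term-order default on this pair while simultaneously preserving the closure of $T$ under addition and the exact pseudo-symmetric gap count $\operatorname{g}(T)=\frac{|\operatorname{I}_\CaC(k)|+1}{2}$. Verifying that such a $T$ exists is the crux, and it is exactly where the closure requirement can clash with forcing $h$ to be a gap; this tension, together with the fact that in $\mathbb{N}^d$ the elements of $\operatorname{I}_\CaC(k)\setminus\{0,k\}$ admit comparable pairs (unlike the degenerate cones of the preceding example), is the part of the argument I would develop most carefully.
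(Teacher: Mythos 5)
Your reduction to the cardinality condition $|\operatorname{I}_{\mathbb{N}^d}(k)|\notin\{3,5\}$, the necessity via Proposition \ref{prop:primary-odd}, and the treatment of the even and $|\operatorname{I}_{\mathbb{N}^d}(k)|=1$ cases coincide with the paper's proof. You have also diagnosed correctly where the work lies when $|\operatorname{I}_{\mathbb{N}^d}(k)|$ is odd and at least $7$: one needs a PEPSY-semigroup $S=T\cup\{k,\tfrac{k}{2}\}$ with $\tfrac{k}{2}\in\operatorname{C}(S)$, and since any candidate gap $h$ with $\tfrac{k}{2}<_{\mathbb{N}^d}h$ satisfies $k-h\prec h$ for every term order, the canonical term-order construction of a pseudo-symmetric semigroup never supplies one; some pair $\{h,k-h\}$ must be flipped against the term-order default without destroying closure under addition. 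But at exactly this point you stop: you declare that producing such a semigroup is ``the crux'' you ``would develop most carefully,'' and you neither construct it nor verify closure. Since this existence claim is the entire content of the sufficiency direction in the only nontrivial case, the proposal has a genuine gap: it is a correct plan, not a proof.

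For comparison, the paper resolves the crux by an explicit swap. It takes the default $\mathcal{S}_k=\{0\}\cup\{x\in\mathbb{N}^d\mid x\succ\tfrac{k}{2}\}\setminus\{k\}$, which is pseudo-symmetric by \cite[Lemma 12]{garcia-on-some}, forms $T=\mathcal{S}_k\cup\{k,\tfrac{k}{2}\}$, chooses $x=\tfrac{k}{2}+e_{i_0}=\min_\preceq\{\tfrac{k}{2}+e_i\in\operatorname{I}_{\mathbb{N}^d}(k)\mid i\in\{1,\ldots,d\}\}$, and sets $S=(T\setminus\{x\})\cup\{k-x\}$, exchanging $x$ with its mirror $k-x=\tfrac{k}{2}-e_{i_0}$. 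The part your proposal lacks is precisely the closure check for $S$: a three-case analysis in which both exclusions $k\neq 2e_{i_0}$ and $k\neq 4e_{i_0}$ enter essentially (the first guarantees $e_{i_0}\in\CaH(T)$, which rules out $s_1+s_2=x$ with $s_1,s_2\in T\setminus\{0,x\}$; the second, together with the $\preceq$-minimality of $\tfrac{k}{2}+e_{i_0}$, rules out $s_1+(k-x)=x$, i.e.\ $s_1=2e_{i_0}$). This is exactly the ``tension'' you anticipated, and it is dissolved not by finding a cleverer pseudo-symmetric semigroup from scratch but by perturbing the canonical one at a single, minimally chosen pair. Once closure holds, $S$ is $k$-positioned with $\operatorname{g}(S)=\operatorname{g}(T)=\frac{|\operatorname{I}_{\mathbb{N}^d}(k)|-3}{2}$, hence a PEPSY-semigroup by Proposition \ref{Prop:PEPSY&k-positioned}, and $\tfrac{k}{2}\leq_{\mathbb{N}^d}x\in\CaH(S)$ gives $\tfrac{k}{2}\in\operatorname{C}(S)$, so Theorem \ref{theo:PEPSY} concludes. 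Without this construction and its verification, your argument does not establish the theorem.
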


\begin{proof}
\textit{Necessary:} If $k_1=2e_i$, $k_2=4e_i$ or $k_3=6e_i$ then $|\operatorname{I}_{\mathbb{N}^d}(k_1)|=3$, $|\operatorname{I}_{\mathbb{N}^d}(k_2)|=5$ and $|\operatorname{I}_{\mathbb{N}^d}(k_3)|=7$, respectively. By Proposition \ref{prop:primary-odd}, we conclude that no primary positioned GNS exists. 

\noindent\textit{Sufficiency:} Let $k\in \mathbb{N}^d\setminus\{2e_i, 4e_i, 6e_i \mid i \in \{1,\ldots,d\}\}$.  Observe that $|\operatorname{I}_{\mathbb{N}^d}(k)|\notin \{3,5,7\}$. We distinguish two cases, depending on the parity of $|\operatorname{I}_{\mathbb{N}^d}(k)|$. If $|\operatorname{I}_{\mathbb{N}^d}(k)|$ is even, we are under the hypothesis of Proposition \ref{prop:even-pp}, and we conclude that there exists a primary positioned GNS. Suppose $|\operatorname{I}_{\mathbb{N}^d}(k)|$ is an odd number different to $3,5$ and $7$. If $|\operatorname{I}_{\mathbb{N}^d}(k)|=1$, then $k=0\in\mathbb{N}^d$, and by Remark \ref{rem:k=0} we obtain that $S=\mathbb{N}^d$ is primary positioned for $k\in\CaC$. Now we examine the case $|\operatorname{I}_{\mathbb{N}^d}(k)|$ is an odd number greater than 7.

Let $\preceq$ be a term order and define $\mathcal{S}_k=\CaC\setminus\bigl(\{x\in \operatorname{I}_{\mathbb{N}^d}(k) \mid 0\ne x\preceq \frac{k}{2}\}\cup\{k\}\bigr)$. By Lemma \ref{lem:k/2} we have that $\frac{k}{2}\in \operatorname{I}_{\mathbb{N}^d}(k)$, and by \cite[Lemma 12]{garcia-on-some} we know that $\mathcal{S}_k$ is pseudo-symmetric, and thus $T=\mathcal{S}_k\cup\{k, \frac{k}{2}\}$ is a PEPSY-semigroup. Consider $x=\frac{k}{2}+e_{i_0}=\min_\preceq\{\frac{k}{2}+e_i\in \operatorname{I}_{\mathbb{N}^d}(k)\mid i\in \{1,\ldots,d\}\}$ and  $S=T\setminus\{x\}\cup\{k-x\}$. In particular, observe that $k-x=\frac{k}{2}-e_{i_0}\in \mathbb{N}^d$ and $e_{i_0}\lneq_{\mathbb{N}^d} \frac{k}{2}$, the equality cannot be reached since by hypothesis $k\ne 2e_{i_0}$. Therefore $e_{i_0}\in \CaH(T)$. Let us prove that $S$ is GNS. It suffices to show that $S$ is closed under addition, since the complement of $S$ in $\mathbb{N}^d$ is finite. Take $s_1,s_2\in S\setminus \{0\}$ and we distinguish the following cases: 
\begin{itemize}
    \item If $s_1,s_2\in T\setminus\{0,x\}$, then $s_1+s_2\in T$, since $T$ is a GNS. Suppose  $s_1+s_2=x=\frac{k}{2}+e_{i_0}$, therefore $s_1+s_2-e_{i_0}=\frac{k}{2}$ and without loss of generality, we can assume that $s_2-e_{i_0}\in \mathbb{N}^d$. It implies that $s_1\leq_{\mathbb{N}^d}\frac{k}{2}$. By definition of $T$ we have $s_1=\frac{k}{2}$, and thus we deduce that $s_2=e_{i_0}$ which is a contradiction, since $e_{i_0}\in \CaH(T)$. So, $s_1+s_2\in T\setminus\{x\}\subset S$.
    \item Suppose $s_1\in T\setminus\{0,x\}$, $s_2=k-x=\frac{k}{2}-e_{i_0}$. First, by construction, $T+y\subseteq T$ for every $y\in \mathbb{N}^d$. So, $s_1+(k-x)\in T$.  Assume $s_1+(k-x)\notin S$. The only possibility is $s_1+(k-x)=x$ and by simple operations we arrive to $s_1=2e_{i_0}\in T\setminus \{0\}$. So, by definition of $T$, $2e_{i_0}\succ \frac{k}{2}$ and equality is not possible since $k\neq 4e_{i_0}$. By the properties of the  term order, we obtain $\frac{k}{2}\lneq k \prec \frac{k}{2}+2e_{i_0}$. In terms of the $i_0$-coordinates, we deduce that $\frac{k^{(i_0)}}{2}\leq 2$. If the rest of the coordinates $j\neq i_0$ of $k$ are $0$,  then $k\in \{2e_{i_0},4e_{i_0}\}$, which contradicts our hypothesis. If there exists $j\neq i_0$ such that the $j$-coordinate is $k^{(j)}\neq 0$, we deduce that $k^{(j)}\geq2$ since all the coordinates of $k$ have to be even. So, $\frac{k^{(j)}}{2}\geq 1$, and thus
    $\frac{k}{2}+e_j\leq k$, that is $\frac{k}{2}+e_j\in \operatorname{I}_{\mathbb{N}^d}(k)$. On one hand, since $\frac{k^{(i_0)}}{2}\leq 2$, we obtain that $\frac{k}{2}+e_j+e_{i_0}\leq_{\mathbb{N}^d}k\prec \frac{k}{2}+2e_{i_0}$. On the other hand, by the minimality of $e_{i_0}$, we conclude that $\frac{k}{2}+2e_{i_0}\prec \frac{k}{2}+e_j+e_{i_0}$, since $\frac{k}{2}+e_{i_0}\prec \frac{k}{2}+e_j$, 
    contradicting the previous inequality. Hence, $s_1+s_2\in T\setminus\{x\}\subset S$.
    \item If $s_1=s_2=k-x=\frac{k}{2}-e_{i_0}$, then $s_1+s_2=k-2e_{i_0}$. Trivially, if $2(k-x)=x$ then $k=\frac{3}{2}x$. It is not difficult to argue that in this case we obtain $k=6e_{i_0}$, which is not possible by hypothesis. Moreover, $k-2e_{i_0}\succ\frac{k}{2}$, otherwise $k\preceq\frac{k}{2}+2e_{i_0}$, a contradiction as we have shown in the previous case. We conclude that $s_1+s_2\in S$.
\end{itemize}

Since $T$ is a PEPSY-semigroup, by Proposition~\ref{Prop:PEPSY&k-positioned}, we know that it is $k$-positioned and $\operatorname{g}(T)=\frac{|\operatorname{I}_\CaC(k)|-3}{2}$. So, by construction, we obtain that $S$ is $k$-positioned and $\operatorname{g}(S)=\operatorname{g}(T)=\frac{|\operatorname{I}_\CaC(k)|-3}{2}$. Using Proposition~\ref{Prop:PEPSY&k-positioned} again, we conclude $S$ is a PEPSY-semigroup. Finally, since $\frac{k}{2}\leq_{\mathbb{N}^d} x$ and $x\in \CaH(S)$, we have $\frac{k}{2}\in \operatorname{C}(S)$. By Theorem~\ref{theo:PEPSY}, we obtain $S$ is primary positioned for $k$.
\end{proof}
 
\section{An arrangement for the set of primary positioned $\CaC$-semigroup for a fixed $k\in \CaC$} \label{section-tree}

From now on, let $\CaC\subseteq \mathbb{N}^d$ be a positive integer cone, $\preceq$ a term order on  
$\mathbb{N}^d$, and let $k\in\CaC$. The set of all $\CaC$-semigroup that are primary positioned for $k\in \CaC$ is denoted by $\mathcal{P}(k)$. Hereafter,  $\mathcal{P}(k)$ is a non-empty set. The aim of this section is to provide a graphical classification of the set $\mathcal{P}(k)$. To achieve this, we begin by introducing the following notation. For any $\CaC$-semigroup $S$, we consider the set
\[
\operatorname{B}(S)=\left \lbrace x\in \operatorname{msg}(S)\mid x \in \operatorname{C}(S)\setminus \left(\operatorname{Minimals}_{\leq_\CaC}(S\setminus\{0\})\right), k-x\in S, x\neq \tfrac{k}{2}\right \rbrace, 
\]

Let us denote by $\mathcal{I}(k)$ the set of irreducible $\CaC$-semigroup having Frobenius element equal to $k$. We define \[
\operatorname{EI}(k)=\biggl \lbrace S\in \mathcal{P}(k)\mid S\in \Bigl \lbrace T\cup \{k\}, T\cup \left \lbrace \tfrac{k}{2} ,k\right \rbrace \Bigr \rbrace,\, T\in \mathcal{I}(k)\biggr \rbrace.
\]
Observe that if $S\in \operatorname{EI}(k)$, we have two possibilities depending on the parity of $|\operatorname{I}_\CaC(k)|$. If it is even, then $S$ is a UESY-semigroup. If it is odd, then $S$ is a PEPSY-semigroup. Now, let $T$ be an irreducible $\CaC$-semigroup having Frobenius element equal to $k$. If $T$ is symmetric, then $T\cup \{k\}$ is primary positioned by Theorem~\ref{Theo:UESY}. If $T$ is pseudo-symmetric, observe that $T\cup \{k\}$ is not $k$-positioned since $k=\frac{k}{2}+\frac{k}{2}$. Consider $S=T\cup \{\frac{k}{2},k\}$. 
From Theorem~\ref{theo:PEPSY} we know that if $\frac{k}{2}\in \operatorname{C}(S)$, then $S$ is primary positioned. Conversely, if
$\frac{k}{2}\notin \operatorname{C}(S)$, then $k-\frac{k}{2}=\frac{k}{2}\in \operatorname{M}(S)\subseteq \CaH(S)$, a contradiction. So, $S$ is primary positioned if and only if $\frac{k}{2}\in \operatorname{C}(S)$. From this discussion, we obtain the following:
\begin{remark}\label{rem:EI(k)}
The structure of \( \operatorname{EI}(k) \) depends on the parity of \( |\operatorname{I}_\CaC(k)| \):
\begin{itemize}
    \item If \( |\operatorname{I}_\CaC(k)| \) is even, then $
    \operatorname{EI}(k) = \{ T \cup \{k\} \mid T \in \mathcal{I}(k) \}.$
    \item If \( |\operatorname{I}_\CaC(k)| \) is odd,  then
    \[
    \operatorname{EI}(k) = \Big\{ T \cup \{\tfrac{k}{2}, k\} \mid T \in \mathcal{I}(k) \text{ and } \tfrac{k}{2} \in \operatorname{C}\left(T \cup \{\tfrac{k}{2}, k\}\right)\Big\}.
    \]
\end{itemize}
\end{remark}

The next result establishes a relationship between the sets $\operatorname{EI}(k)$ and $\operatorname{B}(T)$ for any $T\in \mathcal{P}(k)$.

\begin{proposition}\label{prop:empty}
    Let $T\in \mathcal{P}(k)$. Then, $\operatorname{B}(T)=\emptyset$ if and only if $T\in \operatorname{EI}(k)$.
\end{proposition}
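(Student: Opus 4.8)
The plan is to prove both implications by analyzing the structure forced by the definition of $\operatorname{B}(T)$, leaning heavily on the characterization of $\operatorname{EI}(k)$ given in Remark~\ref{rem:EI(k)} and on Lemma~\ref{lemma:min-max}. Recall that $\operatorname{B}(T)$ collects minimal generators $x$ that lie in $\operatorname{C}(T)$ but are \emph{not} minimal elements of $T\setminus\{0\}$, satisfy $k-x\in T$, and differ from $\tfrac{k}{2}$. The key intuition is that $\operatorname{B}(T)=\emptyset$ should force $T$ to be as ``tight'' as possible, namely obtained from an irreducible semigroup by adjoining only $k$ (and possibly $\tfrac{k}{2}$), so that removing $k$ (resp. $\tfrac{k}{2}$) yields a symmetric (resp. pseudo-symmetric) semigroup.

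First I would prove the direction $T\in\operatorname{EI}(k)\Rightarrow\operatorname{B}(T)=\emptyset$, which I expect to be the more routine one. If $T=T'\cup\{k\}$ with $T'$ symmetric (the even case), or $T=T'\cup\{\tfrac{k}{2},k\}$ with $T'$ pseudo-symmetric (the odd case), I would take an arbitrary candidate $x\in\operatorname{msg}(T)$ and show it cannot satisfy all four defining conditions of $\operatorname{B}(T)$ simultaneously. The strategy is: the minimal generators of $T$ essentially come from $\operatorname{Minimals}_{\leq_\CaC}(T\setminus\{0\})$ together with the adjoined elements $k$ (and $\tfrac{k}{2}$). Since $\operatorname{B}(T)$ explicitly excludes elements of $\operatorname{Minimals}_{\leq_\CaC}(T\setminus\{0\})$, excludes $\tfrac{k}{2}$, and since $x=k$ fails the condition $k-x\in T$ (as $k-k=0\in T$ would need rechecking—actually $0\in T$, so I must instead use that $k\notin\operatorname{C}(T)$ as $k\in S$ is maximal, or that $k$ is not of the required form), the only possible members are ruled out. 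The careful bookkeeping here is to confirm that every minimal generator not in $\operatorname{Minimals}_{\leq_\CaC}(T\setminus\{0\})$ is precisely one of the adjoined elements, which I would extract from the UESY/PEPSY construction.

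The harder direction is $\operatorname{B}(T)=\emptyset\Rightarrow T\in\operatorname{EI}(k)$, and this is where I expect the main obstacle. Here I would argue by contraposition or directly: assuming $T\in\mathcal{P}(k)\setminus\operatorname{EI}(k)$, I must produce an element of $\operatorname{B}(T)$. Since $T$ is primary positioned, $k\in\operatorname{msg}(T)$ in the even case and $\tfrac{k}{2}\in\operatorname{msg}(T)$ in the odd case (by Propositions~\ref{Prop:symm&k-positioned} and \ref{Prop:PEPSY&k-positioned}). The idea is that $T\notin\operatorname{EI}(k)$ means that removing the top element(s) does \emph{not} yield an irreducible semigroup, equivalently $T$ has ``extra'' gaps, equivalently $|\CaH(T)|$ is strictly below the irreducible threshold. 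This surplus should manifest as a minimal generator $x$ of $T$ that is comparable above some nonzero element of $T$ (so $x\notin\operatorname{Minimals}_{\leq_\CaC}(T\setminus\{0\})$) while still lying in $\operatorname{C}(T)$ with $k-x\in T$. The crux is to locate this witness: I would use Lemma~\ref{lemma:min-max} to translate between maximal gaps and minimal nonzero elements, and exploit that if every minimal generator in $\operatorname{C}(T)$ were minimal in $T\setminus\{0\}$ or equal to $\tfrac{k}{2}$, then $T$ would be forced into the rigid UESY/PEPSY form.

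The main technical difficulty will be showing that the negation of membership in $\operatorname{EI}(k)$ genuinely yields a generator satisfying the non-minimality condition $x\in\operatorname{C}(T)\setminus\operatorname{Minimals}_{\leq_\CaC}(T\setminus\{0\})$ together with $k-x\in T$; one must rule out the degenerate possibilities where such an $x$ coincides with $\tfrac{k}{2}$ or fails to be a minimal generator. I anticipate needing to split into the even and odd parity cases throughout, handling $\tfrac{k}{2}$ separately in the odd case since it is deliberately excluded from $\operatorname{B}(T)$, and to invoke Corollary~\ref{unique-Fb} to guarantee that the Frobenius element of the irreducible core is well-defined and equal to $k$ independent of the chosen term order.
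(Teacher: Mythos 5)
Both directions of your plan have genuine gaps, and they share a common cause: you never exploit the clause $k-x\in T$ in the definition of $\operatorname{B}(T)$, which is what the paper's proof converts into statements about \emph{expressions of $k$ as sums} via Propositions~\ref{Prop:symm&k-positioned} and~\ref{Prop:PEPSY&k-positioned}. In the direction $T\in\operatorname{EI}(k)\Rightarrow\operatorname{B}(T)=\emptyset$, your bookkeeping claim --- that every minimal generator of $T$ outside $\operatorname{Minimals}_{\leq_\CaC}(T\setminus\{0\})$ is one of the adjoined elements $k$, $\tfrac{k}{2}$ --- is false. Take $\CaC=\mathbb{N}$, $T'=\langle 3,5\rangle$ (symmetric, $\operatorname{F}(T')=7$) and $T=T'\cup\{7\}=\langle 3,5,7\rangle\in\operatorname{EI}(7)$: then $5\in\operatorname{msg}(T)$, yet $\operatorname{Minimals}_{\leq_\CaC}(T\setminus\{0\})=\{3\}$ and $5\notin\{7,\tfrac{7}{2}\}$. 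Here $\operatorname{B}(T)$ is still empty, but only because $5\notin\operatorname{C}(T)$ and $7-5\notin T$, conditions your argument never invokes. The paper's argument needs no such classification: if $x\in\operatorname{B}(T)$, then $k=x+(k-x)$ with both summands in $T\setminus\{0,\tfrac{k}{2},k\}$, contradicting $k\in\operatorname{msg}(T)$ in the UESY case (Proposition~\ref{Prop:symm&k-positioned}) or the uniqueness of the expression $k=\tfrac{k}{2}+\tfrac{k}{2}$ in the PEPSY case (Proposition~\ref{Prop:PEPSY&k-positioned}).

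In the converse direction your opening assertion is exactly backwards: primary positionedness does \emph{not} give $k\in\operatorname{msg}(T)$; rather, by Proposition~\ref{Prop:symm&k-positioned} a $k$-positioned semigroup with $k\in\operatorname{msg}(T)$ already \emph{is} a UESY-semigroup, so for $T\in\mathcal{P}(k)\setminus\operatorname{EI}(k)$ one has $k\notin\operatorname{msg}(T)$ and, by Proposition~\ref{Prop:PEPSY&k-positioned}, $k$ has more than one expression in $T$. This reversed fact is precisely the witness-producing mechanism your sketch lacks: it yields an expression $k=t_1+t_2$ with $t_1,t_2\in T\setminus\{0,\tfrac{k}{2},k\}$ and $t_1\in\operatorname{msg}(T)$; bijectivity of the map $\phi_T$ of Proposition~\ref{prop:map-MC} (this is where being primary positioned enters) forces $t_1\in\operatorname{C}(T)$, since otherwise $t_2=k-t_1\in\operatorname{M}(T)\subseteq\CaH(T)$; and then $\operatorname{B}(T)=\emptyset$ would force $t_1\in\operatorname{Minimals}_{\leq_\CaC}(T\setminus\{0\})$, whence Lemma~\ref{lemma:min-max} gives $t_2=k-t_1\in\CaH(T)$, a contradiction. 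You correctly anticipated that Lemma~\ref{lemma:min-max}, the set $\operatorname{C}(T)$, and a parity split would appear, but without the multiple-expression argument there is no way to locate the required element of $\operatorname{B}(T)$, so the proof does not close.
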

\begin{proof}
     Suppose $T\in \operatorname{EI}(k)$. Therefore, $T$ is a UESY-semigroup or a PEPSY-semigroup such that $k$ is the Frobenius element of $T\setminus \{k\}$ or $T\setminus \{\frac{k}{2},k\}$. 
     If there exists $x\in \operatorname{B}(T)$, then $k=x+t$ with $t\in T\setminus \{0,\frac{k}{2},k\}$. In particular, if $T$ is a UESY-semigroup, we obtain that $k$ is not a minimal generator and if $T$ is a PEPSY-semigroup, we obtain that $k$, together with $k=\frac{k}{2}+\frac{k}{2}$, has more than one expression. These facts contradict Propositions~\ref{Prop:symm&k-positioned} and \ref{Prop:PEPSY&k-positioned}. So, $\operatorname{B}(T)=\emptyset$. 
     Conversely, let $T\in \mathcal{P}(k)$. Suppose that $T$ is neither a UESY-semigroup nor a PEPSY-semigroup. By Propositions \ref{Prop:symm&k-positioned} and \ref{Prop:PEPSY&k-positioned}, we know that $k$ is not a minimal generator and $k$ has more than one expression in $T$. 
     Since $k\notin \operatorname{msg}(S)$, $k=t_1+t_2$ with $t_1,t_2\in T\setminus\{0\}$. In case where $\frac{k}{2}\in \mathbb{N}^p$, we have $k=\frac{k}{2}+\frac{k}{2}$ is an expression of $k$ in $T$, since by the property of be $k$-positioned, we know that $\frac{k}{2}\in T$. Therefore, without loss of generality, we assume $t_1,t_2\notin \{0,\frac{k}{2},k\}$ and $t_1\in \operatorname{msg}(T)$. It follows that $t_1\in \operatorname{C}(T)$, otherwise, by the bijection of the map $\phi_T$ defined in Proposition \ref{prop:map-MC}, we obtain that  $k-t_1=t_2\in\CaH(T)$, a contradiction. By hypothesis $\operatorname{B}(T)=\emptyset$, so necessarily $t_1\in \operatorname{Minimals}_{\leq_\CaC}(T\setminus\{0\})$, and by applying Lemma \ref{lemma:min-max}, we have that $k-t_1=t_2\in\CaH(T)$, which is false. So, we conclude that $T$ is a UESY-semigroup or a PEPSY-semigorup.    
\end{proof}

Given a $\CaC$-semigroup $S$, the definition of $\operatorname{SG}(S)$ directly implies that its elements are precisely the gaps of $S$ such that $S \cup \{x\}$ remains a $\CaC$-semigroup. Accordingly, the following lemma shows that adding a special gap $x$ of $S$ to it does not affect the invariants $\operatorname{C}(S \cup \{x\})$, $\operatorname{Minimals}{\leq_\CaC}\bigl((S \cup \{x\}) \setminus \{0\}\bigr)$, and $\operatorname{M}(S \cup \{x\})$ under specific conditions.

\begin{lemma}\label{lemma:C-and-min-minus-x}
    Let $S$ be a $\CaC$-semigroup and let $x\in \operatorname{SG}(S)$ such that  $x \notin \operatorname{M}(S) \cup \operatorname{Maximals}_{\leq_\CaC}(\CaH(S))$. Then the following holds:
    \begin{enumerate}
        \item $\operatorname{C}(S \cup \{x\})=\operatorname{C}(S)$.
        \item $\operatorname{Minimals}_{\leq_\CaC}\bigl((S  \cup \{x\})\setminus \{0\}\bigr)=\operatorname{Minimals}_{\leq_\CaC}\bigl(S\setminus \{0\}\bigr)$. 
        \item $\operatorname{M}(S\cup\{x\})=\operatorname{M}(S)$.
    \end{enumerate}
\end{lemma}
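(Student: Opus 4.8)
The plan is to write $S' = S \cup \{x\}$; since $x \in \operatorname{SG}(S)$, the set $S'$ is again a $\CaC$-semigroup and $\CaH(S') = \CaH(S) \setminus \{x\}$. The whole argument rests on one preliminary observation extracted from the hypothesis $x \notin \operatorname{M}(S)$: as $x \in \CaH(S)$ and $x \neq 0$, the failure of $x$ to lie in $\operatorname{M}(S)$ means $\operatorname{I}_S(x) \neq \{0\}$, so there is a nonzero $s_0 \in S$ with $x - s_0 \in S$, and in particular $s_0 <_\CaC x$. This element $s_0$, together with the special-gap property $2x \in S$, will do all the work.

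For item (1) the inclusion $\operatorname{C}(S') \subseteq \operatorname{C}(S)$ is immediate from $\CaH(S') \subseteq \CaH(S)$. For the reverse inclusion I would take $y \in \operatorname{C}(S)$ witnessed by $y \leq_\CaC h$ with $h \in \CaH(S)$. If $h \neq x$ we are done; if $h = x$, I invoke $x \notin \operatorname{Maximals}_{\leq_\CaC}(\CaH(S))$ to find a gap $h'$ with $x <_\CaC h'$, whence $y \leq_\CaC x <_\CaC h'$ with $h' \in \CaH(S')$, so $y \in \operatorname{C}(S')$.

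For item (2) I first show $x \notin \operatorname{Minimals}_{\leq_\CaC}(S' \setminus \{0\})$: the element $s_0 \in S \setminus \{0\}$ with $s_0 <_\CaC x$ witnesses non-minimality. Hence every minimal of $S' \setminus \{0\}$ lies in $S \setminus \{0\}$, and being minimal in the larger set it is a fortiori minimal in $S \setminus \{0\}$. Conversely, if some $z \in \operatorname{Minimals}_{\leq_\CaC}(S \setminus \{0\})$ failed to be minimal in $S' \setminus \{0\}$, the only possible new witness below it is $x$, i.e.\ $x <_\CaC z$; but then $s_0 <_\CaC x <_\CaC z$ contradicts the minimality of $z$ in $S \setminus \{0\}$. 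Thus the two sets of minimals coincide.

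Item (3) is the delicate one. The inclusion $\operatorname{M}(S') \subseteq \operatorname{M}(S)$ is routine, since $S \subseteq S'$ gives $\operatorname{I}_S(h) \subseteq \operatorname{I}_{S'}(h)$ for every $h \in \CaH(S') \subseteq \CaH(S)$. The hard direction is $\operatorname{M}(S) \subseteq \operatorname{M}(S')$. Fixing $h \in \operatorname{M}(S) \setminus \{0\}$ (note $h \neq x$, so $h \in \CaH(S')$), it remains to prove $\operatorname{I}_{S'}(h) = \{0\}$. Suppose for contradiction that some nonzero $s \in S'$ has $h - s \in S'$. If both $s$ and $h - s$ lie in $S$, this already contradicts $\operatorname{I}_S(h) = \{0\}$; otherwise the new point $x$ occurs as $s$ or as $h - s$, so in either case $h - x \in S'$. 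Excluding $h - x = x$ (which would give $h = 2x \in S$, impossible for a gap), we get $h - x \in S \setminus \{0\}$. Now I absorb $s_0$: the element $(h - x) + s_0$ lies in $S \setminus \{0\}$ and satisfies $h - \big((h - x) + s_0\big) = x - s_0 \in S$, hence it is a nonzero member of $\operatorname{I}_S(h)$, contradicting $h \in \operatorname{M}(S)$. Therefore $\operatorname{I}_{S'}(h) = \{0\}$ and $h \in \operatorname{M}(S')$. I expect this combination step — turning a forbidden $x$-decomposition of $h$ into a genuine $S$-decomposition by absorbing $s_0$ — to be the main obstacle, since it is the only point where the two hypotheses on $x$, namely $x \notin \operatorname{M}(S)$ and $2x \in S$, actually interact.
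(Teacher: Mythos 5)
Your proof has a genuine gap, and it sits in the preliminary observation that everything else leans on. You claim that $x\notin\operatorname{M}(S)$ produces a nonzero $s_0\in S$ with $x-s_0\in S$. No such $s_0$ can exist for any gap: since $S$ is closed under addition, $x=s_0+(x-s_0)$ would lie in $S$, contradicting $x\in\CaH(S)$. The root cause is the definition of $\operatorname{M}(S)$. Read literally, $\operatorname{I}_S(h)=\{z\in S\mid h-z\in S\}$ is \emph{empty} for every gap $h$ (same closure argument), so the literal definition would force $\operatorname{M}(S)=\{0\}$ for every $S$, which contradicts Proposition~\ref{prop:M(S)=0} and the assertion $|\operatorname{M}(S)|=\operatorname{m}(S)$ for numerical semigroups. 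The reading the paper actually uses — including in its own proof of this lemma — is $\operatorname{M}(S)=\{h\in\CaH(S)\mid s\nleq_\CaC h \text{ for all } s\in S\setminus\{0\}\}\cup\{0\}$, equivalently $\operatorname{M}(S)=\{y\in\CaC\mid s\nleq_\CaC y \text{ for all } s\in\operatorname{Minimals}_{\leq_\CaC}(S\setminus\{0\})\}$. Under that reading, $x\notin\operatorname{M}(S)$ gives a nonzero $s_0\in S$ with $s_0<_\CaC x$, but only $x-s_0\in\CaC$, not $x-s_0\in S$. Note the hypothesis is genuinely needed in this form: under your vacuous reading of $x\notin\operatorname{M}(S)$, item (2) of the lemma is simply false (take a special gap $x$ with no nonzero element of $S$ below it; then $x$ becomes a new minimal element of $(S\cup\{x\})\setminus\{0\}$).

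What survives and what breaks: your item (1) never uses $s_0$ and is exactly the paper's argument; your item (2) uses only the facts that $s_0\in S\setminus\{0\}$ and $s_0<_\CaC x$, which the corrected reading does supply, so it also goes through and matches the paper. But item (3) collapses at precisely the ``absorption'' step you flagged as the crux: the element $(h-x)+s_0$ satisfies $h-\bigl((h-x)+s_0\bigr)=x-s_0\in\CaC$, which is not membership in $S$, so it is not an element of $\operatorname{I}_S(h)$ in your sense; moreover, under the correct reading the condition to rule out for $h\in\operatorname{M}(S\cup\{x\})$ is $s\leq_\CaC h$ for nonzero $s\in S\cup\{x\}$, i.e.\ $h-s\in\CaC$, not $h-s\in S\cup\{x\}$, so your case analysis is testing the wrong predicate. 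Once the definition is straightened out, item (3) needs no cleverness at all: if $x\leq_\CaC h$ then $s_0\leq_\CaC x\leq_\CaC h$ by transitivity of $\leq_\CaC$, contradicting $h\in\operatorname{M}(S)$; alternatively, as the paper does, observe that $\operatorname{M}$ is determined by $\operatorname{Minimals}_{\leq_\CaC}(\cdot\setminus\{0\})$ and quote item (2). In particular, the interaction you anticipated between $x\notin\operatorname{M}(S)$ and $2x\in S$ does not occur: the special-gap hypothesis plays no role in the three identities and is only there so that $S\cup\{x\}$ is again a $\CaC$-semigroup.
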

\begin{proof}
    Let us denote $T=S\cup \{x\}$. In particular, $\CaH(S)=\CaH(T)\cup \{x\}$.
    
    \noindent We start by proving the first claim. If $y\in \operatorname{C}(T)$, then there exists $h\in \CaH(T)\subset \CaH(S)$ such that $y\leq_\CaC h$. That means $y\in \operatorname{C}(S)$. So, $\operatorname{C}(T)\subseteq \operatorname{C}(S)$. Conversely, if $y\in \operatorname{C}(S)$, then there exists $h\in \CaH(S)= \CaH(T)\cup \{x\}$ such that $y\leq_\CaC h$. If $h\in \CaH(T)$, then $y\in \operatorname{C}(T)$. Assuming $h=x$, by hypothesis there exists $h'\in \CaH(S)\setminus \{x\}$ such that $x \leq_\CaC h'$. In particular, $h'\in \CaH(T)$ and we have $y\leq_\CaC x\leq_\CaC h'$, which implies $y\in \operatorname{C}(T)$. So, $\operatorname{C}(S)\subseteq \operatorname{C}(T)$ and the first claim holds.
    
   \noindent Now we prove the second claim. Suppose $y\in \operatorname{Minimals}_{\leq_\CaC}\bigl(T\setminus \{0\})$. Hence, for all $z\in \CaC\setminus \{0\}$ such that $z<_\CaC y$, we have $z\in\CaH(T)\subset \CaH(S)$, that implies $y\in \operatorname{Minimals}_{\leq_\CaC}\bigl(S\setminus \{0\})$. Conversely, let $y\in \operatorname{Minimals}_{\leq_\CaC}\bigl(S\setminus \{0\})$ and assume $y\notin \operatorname{Minimals}_{\leq_\CaC}\bigl(T\setminus \{0\})$. This means that $x\leq_\CaC y$. Now, let $z \in \operatorname{I}_S(x)$. Since $z<_\CaC y$, by the minimality of $y$ in $S\setminus \{0\}$ we have $z=0$. This implies that $\operatorname{I}_S(x)=0$, which means $x\in \operatorname{M}(S)$, a contradiction. So, $y\in \operatorname{Minimals}_{\leq_\CaC}\bigl(T\setminus \{0\})$.    
   
   \noindent Finally, since $\operatorname{M}(S)=\{y\in \CaC\mid s\nleq_\CaC y\text{ for all }s\in \operatorname{Minimals}_{\leq_\CaC}(S\setminus \{0\})\}$, and applying the second claim, we conclude that $\operatorname{M}(T)=\operatorname{M}(S)$.
\end{proof}

The condition established in Lemma \ref{lemma:C-and-min-minus-x} can be reformulated as follows.

\begin{lemma}\label{lemma:technical}
    Let $S$ be a $\CaC$-semigroup and let $x\in \operatorname{SG}(S)$. Then,

  \begin{align*}
        x \notin \operatorname{M}(S) &\cup \operatorname{Maximals}_{\leq_\CaC}(\CaH(S))
        \notag\\
        \Leftrightarrow 
         x &\in \operatorname{C}(S \cup \{x\}) 
        \setminus 
        \operatorname{Minimals}_{\leq_\CaC}\bigl((S  \cup \{x\})\setminus \{0\}\bigr).
    \end{align*}
\end{lemma}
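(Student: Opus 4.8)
The plan is to prove the equivalence by splitting the set-difference on the right-hand side into its two defining conditions and matching each one to a corresponding condition on the left. Write $T=S\cup\{x\}$. Since $x\in\operatorname{SG}(S)$, the set $T$ is again a $\CaC$-semigroup, and because $x\in\CaH(S)$ we have the set identities $\CaH(T)=\CaH(S)\setminus\{x\}$, $x\in T\setminus\{0\}$, and $x\notin S$. The last of these is what makes the argument clean: any comparison $s\leq_\CaC x$ with $s\in S$ is automatically strict, since $x\notin S$ forces $s\neq x$, so $s<_\CaC x$.

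First I would show $x\in\operatorname{C}(T)\iff x\notin\operatorname{Maximals}_{\leq_\CaC}(\CaH(S))$. Unfolding the definition, $x\in\operatorname{C}(T)$ means there is $h\in\CaH(T)=\CaH(S)\setminus\{x\}$ with $x\leq_\CaC h$; as $h\neq x$ this reads $x<_\CaC h$ for some gap $h$ of $S$, which is exactly the statement that $x$ is \emph{not} a maximal gap of $S$. Conversely, any gap of $S$ lying strictly above $x$ is distinct from $x$, hence belongs to $\CaH(T)$, giving $x\in\operatorname{C}(T)$.

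Next I would show $x\notin\operatorname{Minimals}_{\leq_\CaC}(T\setminus\{0\})\iff x\notin\operatorname{M}(S)$. Non-minimality of $x$ in $T\setminus\{0\}$ means some $z\in T\setminus\{0\}$ satisfies $z<_\CaC x$; since $z\neq x$ and $T\setminus\{0\}=(S\setminus\{0\})\cup\{x\}$, such a $z$ lies in $S\setminus\{0\}$, so the condition is precisely the existence of $s\in S\setminus\{0\}$ with $s<_\CaC x$. On the other hand, $x\in\operatorname{M}(S)$ is equivalent to $\operatorname{I}_S(x)=\{0\}$ (equivalently, using the characterization $\operatorname{M}(S)=\{y\in\CaC\mid s\nleq_\CaC y \text{ for all }s\in\operatorname{Minimals}_{\leq_\CaC}(S\setminus\{0\})\}$ recorded in the proof of Lemma~\ref{lemma:C-and-min-minus-x}), which says exactly that no $s\in S\setminus\{0\}$ satisfies $s\leq_\CaC x$; here I use $x\neq0$, which holds since $x\in\CaH(S)$. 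Taking contrapositives matches the two conditions.

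Combining the two equivalences, the right-hand condition $x\in\operatorname{C}(T)\setminus\operatorname{Minimals}_{\leq_\CaC}(T\setminus\{0\})$ is the conjunction ``$x\notin\operatorname{Maximals}_{\leq_\CaC}(\CaH(S))$ and $x\notin\operatorname{M}(S)$'', i.e. $x\notin\operatorname{M}(S)\cup\operatorname{Maximals}_{\leq_\CaC}(\CaH(S))$, which is the left-hand side. The statement is therefore a direct unfolding of the definitions, and the only real care needed --- the closest thing to an obstacle --- is the bookkeeping around strict versus non-strict order: one must repeatedly invoke $x\notin S$ to upgrade $\leq_\CaC$ to $<_\CaC$ when comparing $x$ with elements of $S$, and use $\CaH(T)=\CaH(S)\setminus\{x\}$ to pass from ``maximal gap of $S$'' to the condition defining $\operatorname{C}(T)$. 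The special-gap hypothesis $x\in\operatorname{SG}(S)$ is used only to guarantee that $T$ is genuinely a $\CaC$-semigroup, so that the invariants $\operatorname{C}(T)$, $\operatorname{M}(T)$, and $\operatorname{Minimals}_{\leq_\CaC}(T\setminus\{0\})$ are the intended ones.
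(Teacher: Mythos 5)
Your proof is correct and matches the paper's own argument essentially step for step: both decompose the right-hand side into its two defining conditions, identify $x\in\operatorname{C}(S\cup\{x\})$ with non-maximality of $x$ in $\CaH(S)$ (using $\CaH(S\cup\{x\})=\CaH(S)\setminus\{x\}$ to pass between strict and non-strict comparisons), and identify minimality of $x$ in $(S\cup\{x\})\setminus\{0\}$ with membership in $\operatorname{M}(S)$, all by unfolding definitions. The one point where you had to make a choice --- reading $x\in\operatorname{M}(S)$ via the $\leq_\CaC$-based characterization rather than the literal $\operatorname{I}_S$-based wording --- is exactly the reading the paper itself uses in its proofs.
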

\begin{proof}
    
    Let $x\in \operatorname{SG}(S)$. First, observe that $x\in \operatorname{Minimals}_{\leq_\CaC}\bigl((S  \cup \{x\})\setminus \{0\}\bigr)$ if and only if for every $y \in \CaC\setminus\{0\}$ such that $y<_\CaC x$ we have that $y\in \CaH(S)$, and this is the definition of $x \in \operatorname{M}(S)$. Therefore, $x\notin \operatorname{M}(S)$ if and only if $x\notin \operatorname{Minimals}_{\leq_\CaC}\bigl((S  \cup \{x\})\setminus \{0\}\bigr)$. Now, suppose that $x\notin \operatorname{Maximals}_{\leq_\CaC}(\CaH(S))$. Then $x\in \operatorname{C}(S\cup\{x\})$, since $x\in \CaH(S)\subseteq \operatorname{C}(S)$. Conversely, suppose that $x\in \operatorname{C}(S\cup\{x\})$, that is, there exists $h\in \CaH(S\cup\{x\})=\CaH(S)\setminus\{x\}$ such that $x<_\CaC h$. So, $x\notin \operatorname{Maximals}_{\leq_\CaC}(\CaH(S))$.
\end{proof}

Let $S\in \mathcal{P}(k)\setminus \operatorname{EI}(k)$. In this case we know that $\operatorname{B}(S)\neq \emptyset$ and we define $\beta(S)=\max_{\preceq} \operatorname{B}(S)$. From Lemmas \ref{lemma:C-and-min-minus-x} and \ref{lemma:technical}, we present the following result. 

\begin{proposition}
    If $S\in\mathcal{P}(k)\setminus \operatorname{EI}(k)$, then $S\setminus\{\beta(S)\}\in \mathcal{P}(k)$.
  
\end{proposition}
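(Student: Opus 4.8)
The plan is to set $x=\beta(S)$ and $T=S\setminus\{x\}$, and to show that $T$ inherits both the $k$-positioned property and the equality defining primary positionedness. Since $x=\beta(S)\in\operatorname{B}(S)$, the definition of $\operatorname{B}(S)$ gives me four facts to exploit: $x\in\operatorname{msg}(S)$, $x\in\operatorname{C}(S)\setminus\operatorname{Minimals}_{\leq_\CaC}(S\setminus\{0\})$, $k-x\in S$, and $x\neq\frac{k}{2}$. Because $x$ is a minimal generator, $T$ is again a $\CaC$-semigroup, with $\CaH(T)=\CaH(S)\cup\{x\}$; and because $T\cup\{x\}=S$ is a $\CaC$-semigroup, the characterization of special gaps recalled before Lemma~\ref{lemma:C-and-min-minus-x} tells me $x\in\operatorname{SG}(T)$. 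This is the configuration to which the two preceding lemmas apply, so the whole argument reduces to checking their hypotheses and reading off their conclusions.

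The bridge between the hypothesis $S\in\mathcal{P}(k)\setminus\operatorname{EI}(k)$ and the lemmas is the observation that the condition $x\in\operatorname{C}(S)\setminus\operatorname{Minimals}_{\leq_\CaC}(S\setminus\{0\})$ coming from $\operatorname{B}(S)$ is \emph{exactly} the right-hand side of Lemma~\ref{lemma:technical} applied with $T$ in place of $S$, since $T\cup\{x\}=S$. Thus Lemma~\ref{lemma:technical} yields $x\notin\operatorname{M}(T)\cup\operatorname{Maximals}_{\leq_\CaC}(\CaH(T))$. With this in hand, Lemma~\ref{lemma:C-and-min-minus-x} (again with $T$ playing the role of $S$, and $x\in\operatorname{SG}(T)$) delivers the two crucial invariance statements $\operatorname{C}(T)=\operatorname{C}(T\cup\{x\})=\operatorname{C}(S)$ and $\operatorname{M}(T)=\operatorname{M}(T\cup\{x\})=\operatorname{M}(S)$.

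It remains to verify that $T$ is $k$-positioned, i.e.\ that $k-h\in T$ for every $h\in\CaH(T)=\CaH(S)\cup\{x\}$. For $h\in\CaH(S)$, the $k$-positionedness of $S$ gives $k-h\in S$; I must only rule out $k-h=x$, which would force $h=k-x$, and since $k-x\in S$ this contradicts $h\in\CaH(S)$, so $k-h\in S\setminus\{x\}=T$. For the new gap $h=x$, I use $k-x\in S$ together with $x\neq\frac{k}{2}$ (hence $k-x\neq x$) to conclude $k-x\in S\setminus\{x\}=T$. Both clauses of the definition of $\operatorname{B}(S)$ are therefore used precisely here.

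Finally, combining the pieces: $S\in\mathcal{P}(k)$ gives $|\operatorname{M}(S)|+|\operatorname{C}(S)|=|\operatorname{I}_\CaC(k)|$, and the invariance $\operatorname{M}(T)=\operatorname{M}(S)$, $\operatorname{C}(T)=\operatorname{C}(S)$ then yields $|\operatorname{M}(T)|+|\operatorname{C}(T)|=|\operatorname{I}_\CaC(k)|$, so $T=S\setminus\{\beta(S)\}$ is $k$-positioned with the defining equality, i.e.\ $T\in\mathcal{P}(k)$. I expect the only delicate point to be the bookkeeping that matches the hypotheses of Lemmas~\ref{lemma:technical} and~\ref{lemma:C-and-min-minus-x} to the present situation — in particular confirming $x\in\operatorname{SG}(T)$ and that the $\operatorname{B}(S)$-membership of $x$ supplies exactly the right-hand condition of Lemma~\ref{lemma:technical}; once this is set up correctly, the rest is routine verification.
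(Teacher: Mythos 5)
Your proof is correct and follows essentially the same route as the paper's: you verify $\beta(S)\in\operatorname{SG}(T)$, use Lemma~\ref{lemma:technical} (with $T$ in place of $S$) to convert the $\operatorname{B}(S)$-membership into the hypothesis of Lemma~\ref{lemma:C-and-min-minus-x}, deduce $\operatorname{M}(T)=\operatorname{M}(S)$ and $\operatorname{C}(T)=\operatorname{C}(S)$, and check $k$-positionedness of $T$ directly from $k-\beta(S)\in S$ and $\beta(S)\neq\frac{k}{2}$. Your write-up is in fact slightly more explicit than the paper's about the bookkeeping (notably why $x\in\operatorname{SG}(T)$ and why $k-x\neq x$), but the argument is the same.
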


\begin{proof}
    Let $S\in\mathcal{P}(k)\setminus \operatorname{EI}(k)$, and set $T=S\setminus\{\beta(S)\}$. Clearly, $T$ is a $\CaC$-semigroup since $\beta(S)$ is a minimal generator of $S$. By applying Lemma \ref{lemma:technical}, we deduce that $\beta(S)\notin\operatorname{M}(T) \cup \operatorname{Maximals}_{\leq_\CaC}(\CaH(T))$, since $\beta(S)\in \operatorname{SG}(T)$. Thus, the hypotheses of Lemma \ref{lemma:C-and-min-minus-x} are satisfied and since $S$ is primary positioned we conclude that $|\operatorname{I}_\CaC(k)|=|\operatorname{M}(S)|+|\operatorname{C}(S)|=|\operatorname{M}(T)|+|\operatorname{C}(T)|$. Hence, in order to conclude, it is sufficient to show that $T$ is $k$-positioned. In fact, if $h\in \CaH(T)$, then $h\in \CaH(S)\cup \{\beta(S)\}$. In particular, in the case $h=\beta(S)$, by definition we have $k-\beta(S)\in S\setminus \{\beta(S)\}= T$. While, since $S$ is $k$-positioned, if $h\in \CaH(S)$, then $k-h\in S=T\cup \{\beta(S)\}$. In this case, it is not possible $k-h=\beta(S)$, otherwise $k-\beta(S)\notin S$, that contradicts the definition of $\beta(S)$. So, $k-h\in T$ and we can conclude that $T$ is $k$-positioned.

\end{proof}

With the theoretical foundation established, we introduce the transform $\Psi_k: \mathcal{P}(k)\setminus \operatorname{EI}(k)\longrightarrow \mathcal{P}(k)$ defined by $\Psi_k(S)= S\setminus\{\beta(S)\}$. Since the set $\operatorname{B}(S)$ is finite for every $\CaC$-semigroup $S\in \mathcal{P}(k)\setminus \operatorname{EI}(k)$ and by Proposition \ref{prop:empty}, we deduce that there exists $n \in \mathbb{N}$, depending on $S$,  such that $\Psi_k^n(S) \in \operatorname{EI}(k)$. So, given any $T\in \operatorname{EI}(k)$ we can consider the set  
 \[
 \mathcal{P}_T(k) = \big\{ S \in \mathcal{P}(k) \mid \exists \,n \in \mathbb{N} \text{ such that } \Psi_k^n(S) = T \big\}.
 \]
 In this context, we define the graph $\operatorname{G}(\mathcal{P}_T(k))$, whose vertex set is the set $\mathcal{P}_T(k)$, and the pair $(S_1,S_2)\in\mathcal{P}_T(k)\times \mathcal{P}_T(k)$ is an edge is and only if  $\Psi_k(S_1)=S_2$. If $(S_1, S_2)$ is an edge, we say that $S_1$ is a child of $S_2$.

\begin{proposition}\label{Prop:rootedtree}
    For every $T\in \operatorname{EI}(k)$, the graph $\operatorname{G}(\mathcal{P}_T(k))$ is a tree, where $T$ is the root.
\end{proposition}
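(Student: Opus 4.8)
The plan is to show that $\operatorname{G}(\mathcal{P}_T(k))$ is connected, acyclic, and has $T$ as its unique root, which together establish that it is a rooted tree. The key structural fact, already assembled in the excerpt, is that the transform $\Psi_k$ is a well-defined map $\mathcal{P}(k)\setminus\operatorname{EI}(k)\to\mathcal{P}(k)$, that $\Psi_k$ strictly decreases the cardinality of the semigroup (since $\Psi_k(S)=S\setminus\{\beta(S)\}$ removes exactly one element, namely the minimal generator $\beta(S)$), and that by Proposition~\ref{prop:empty} a semigroup lies in $\operatorname{EI}(k)$ precisely when $\operatorname{B}(S)=\emptyset$, i.e.\ precisely when $\Psi_k$ is undefined on it. I would take each defining property of a rooted tree in turn.

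First I would prove connectivity. By construction, $\mathcal{P}_T(k)$ consists exactly of those $S\in\mathcal{P}(k)$ for which some iterate $\Psi_k^n(S)$ equals $T$. For such an $S$, the sequence $S,\Psi_k(S),\Psi_k^2(S),\ldots,\Psi_k^n(S)=T$ is a path in $\operatorname{G}(\mathcal{P}_T(k))$ from $S$ to $T$ (each consecutive pair is an edge by definition of the edge set). Hence every vertex is connected to $T$, and therefore any two vertices are connected through $T$; so the graph is connected. I must also check that every intermediate term $\Psi_k^i(S)$ genuinely lies in $\mathcal{P}_T(k)$, which is immediate since applying $\Psi_k$ the remaining $n-i$ times returns $T$.

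Next I would establish acyclicity, which is where the main work lies. The cleanest route is to observe that $\Psi_k$ induces an orientation in which every edge $(S_1,S_2)$ points ``upward'' from child to parent and strictly decreases a monotone quantity; here the natural quantity is $|S\cap\operatorname{I}_\CaC(k)|$ or equivalently the genus-type invariant, since $\Psi_k$ removes one element of $S$ lying in $\operatorname{I}_\CaC(k)$ (note $\beta(S)\in\operatorname{C}(S)\subseteq\operatorname{I}_\CaC(k)$ by the definition of $\operatorname{B}(S)$). Because each vertex $S\ne T$ has exactly one outgoing edge --- namely to its unique parent $\Psi_k(S)=S\setminus\{\beta(S)\}$, with $\beta(S)=\max_\preceq\operatorname{B}(S)$ being uniquely determined --- the graph is a functional graph on $\mathcal{P}_T(k)\setminus\{T\}$ together with the sink $T$ (on which $\Psi_k$ is undefined). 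A functional graph that is connected and in which iterating the function always reaches the single fixed-point-free sink $T$ cannot contain a cycle, for a cycle would force the strictly decreasing invariant to return to its starting value, a contradiction; equivalently, any closed walk following parent-edges would force $\Psi_k^m(S)=S$ for some $m\ge 1$, impossible since $|S\cap\operatorname{I}_\CaC(k)|$ strictly drops at each step.

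Finally I would identify $T$ as the root. Since $T\in\operatorname{EI}(k)$, Proposition~\ref{prop:empty} gives $\operatorname{B}(T)=\emptyset$, so $\beta(T)$ is undefined and $T$ has no outgoing edge, i.e.\ no parent; every other vertex $S\in\mathcal{P}_T(k)$ has the well-defined parent $\Psi_k(S)$ by the preceding proposition, hence exactly one outgoing edge. A connected acyclic graph is a tree, and the unique vertex of out-degree zero is its root; thus $\operatorname{G}(\mathcal{P}_T(k))$ is a tree rooted at $T$. The main obstacle I anticipate is making the acyclicity argument fully rigorous, which requires pinning down the strictly monotone invariant that $\Psi_k$ decreases and verifying that each non-root vertex has a \emph{unique} parent (guaranteed by $\beta(S)=\max_\preceq\operatorname{B}(S)$ being single-valued); once these two points are in place, the tree structure follows from the standard characterization of rooted trees as connected graphs in which every non-root vertex has a unique parent.
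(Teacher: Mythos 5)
Your proposal is correct and takes essentially the same approach as the paper: both arguments rest on the facts that every $S\in\mathcal{P}_T(k)$ reaches $T$ by iterating $\Psi_k$ (giving a path of edges to $T$) and that $\beta(S)=\max_\preceq\operatorname{B}(S)$ is single-valued, so each non-root vertex has a unique parent. The paper compresses this into the claim that each vertex has a unique edge-path to $T$ and concludes directly, whereas you unfold the same skeleton into connectivity plus acyclicity, making the acyclicity explicit via the strictly decreasing invariant $|S\cap\operatorname{I}_\CaC(k)|$ --- a more detailed rendering of the identical argument.
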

\begin{proof} 
Let $S\in \mathcal{P}_T(k)$ with $S\neq T$. So, there exists $n\in \mathbb{N}$ such that $\Psi_k^n(S)=T$. In particular, there exists a sequence of edges $(S_1,S_2)$, $(S_2, S_3), \ldots, (S_{n-1},S_n)$ such that $S_1=S$, $S_n=T$, and $S_{i+1}=\psi(S_i)$ for every $i\in \{1,\ldots,n-1\}$. This sequence is unique since the elements $\beta(S_i)$, for every $i\in \{1,\ldots,n-1\}$, are uniquely defined. In particular, we have a unique path of edges linking $S$ with $T$. Therefore, the graph is a tree with root $T$.
\end{proof}

Observe that, given any $S\in \mathcal{P}(k)$ the set of children of $S$ equals to
\[
\left\{S\cup\{x\}\mid x\in \operatorname{SG}(S) \text{ and } x=\beta(S\cup\{x\})\right\},
\]
and as a consequence of Lemma~\ref{lemma:technical} we obtain that if $x=\beta(S\cup \{x\})$, then $x\in \operatorname{SG}(S)$ and $x\notin \operatorname{M}(S)\cup \operatorname{Maximals}_{\leq_\CaC}(\CaH(S))$.  Now, our purpose is focused on determining the children of any primary positioned $\CaC$-semigroup $S$ without directly computing $S\cup \{x\}$ and $\beta(S\cup \{x\})$ for each $x\in \operatorname{SG}(S)$ and $x\notin \operatorname{M}(S)\cup \operatorname{Maximals}_{\leq_\CaC}(\CaH(S))$. To reach our aim we distinguish two cases: whether \( S \in \operatorname{EI}(k) \), i.e., when we are interested in computing the children of the root, or when we are concerned with the children of any other vertex. This will be the content of the next two subsections. 

\subsection{The children of a root}

Let $T\in \operatorname{EI}(k)$ and $x\in \operatorname{SG}(T)$. By the maximality condition of $\beta(T\cup \{x\})$, we might intuitively expect that elements $x\in\operatorname{SG}(S)$ satisfying $x \succ \frac{k}{2}$ would ensure that  $T \cup \{x\} $ is a child of $T$. We will analyse this case later. However, this approach leads to a natural question: Can exist a child $T\cup\{x\}$ of $T$ such that $x\in\operatorname{SG}(T)$ and $x\prec \frac{k}{2}$? To study this particular scenario, we present the following two examples.

\begin{example}
Let $T$ be the primary positioned $\mathbb{N}^2$-semigroup  for $k=(6,6)$, defined in Figure \ref{fig:3k}. Take $x=(2,2)\in\operatorname{SG}(T)$ and consider the $\CaC$-semigroup $S=T\cup\{(2,2)\}$. Observe that $\operatorname{B}(S)=\{x\}=\{\beta(S)\}$. Therefore, $S$ is a child of $T$ and $x\prec\frac{k}{2}$.   
    \begin{figure}[htbp]
        \centering
        \includegraphics[width=0.5\linewidth]{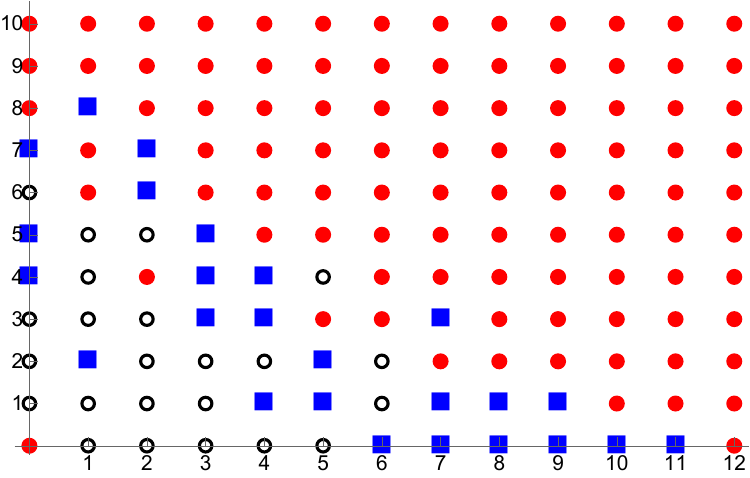}
        \captionsetup{width=0.58\textwidth}
        \caption{Primary positioned $\mathbb{N}^2$-semigroup $T$ for $k=(6,6)$, $\circ\in\CaH(T)$;  \textcolor{red}{$\bullet$} $\in T$; \textcolor{blue}{\scalebox{0.6}{$\blacksquare$}} $\in \operatorname{msg}(T)$}
        \label{fig:3k}
    \end{figure}
\end{example}

\begin{example}
Consider the primary positioned $\mathbb{N}^2$-semigroup $T$ for $k=(4,8)$, graphically represented in Figure  \ref{fig:4k}. Fixed $x = (1,2) \in \operatorname{SG}(T)$, and define the $\CaC$-semigroup $S = T \cup {(1,2)}$. Computing $\operatorname{B}(S) = \{x\} = \{\beta(S)\}$. So, $S$ is a child of $T$ and $x\prec\frac{k}{2}$.   
\begin{figure}[htbp]
        \centering
        \includegraphics[width=0.5\linewidth]{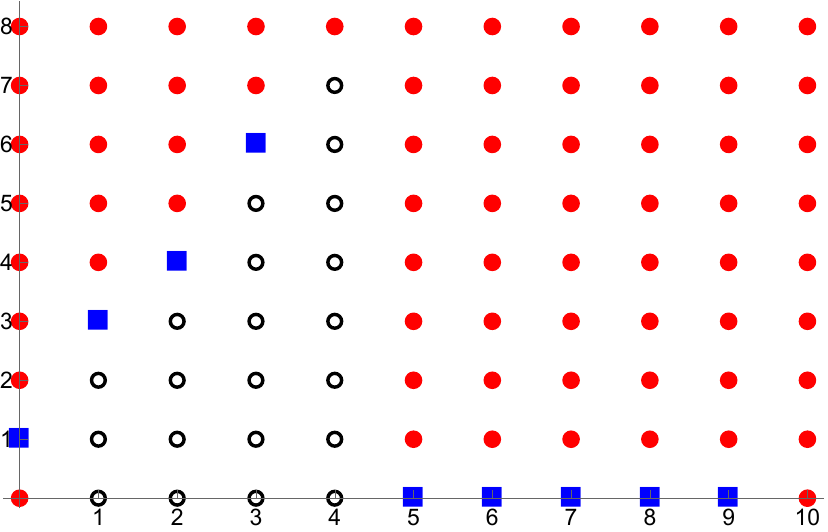}
        \captionsetup{width=0.58\textwidth}
        \caption{Primary positioned $\mathbb{N}^2$-semigroup $T$  for $k=(4,8)$, $\circ\in\CaH(T)$;  \textcolor{red}{$\bullet$} $\in T$; \textcolor{blue}{\scalebox{0.6}{$\blacksquare$}} $\in \operatorname{msg}(T)$}
        \label{fig:4k}
\end{figure}
\end{example}

The examples above help us formulate the next result. Apart from the condition $x\succ \frac{k}{2}$, the special gaps described in them are the particular cases of the following characterization of the children of a $\CaC$-semigroup belonging to $\operatorname{EI}(k)$. 

\begin{theorem}\label{Thrm:FirstStep}
    Let $T\in \operatorname{EI}(k)$. Then, all the children of $T$ are the $\CaC$-semigroups $T\cup\{x\}$ such that $x\in \operatorname{SG}(T)$, $x\notin\operatorname{M}(T)\cup\operatorname{Maximals}_{\leq_\CaC}(\CaH(T))$ and $x\succ \frac{k}{2}$, $3x=k$ or $4x=k$.
\end{theorem}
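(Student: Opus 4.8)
The plan is to reduce the whole statement to a computation of $\operatorname{B}(U)$ for $U=T\cup\{x\}$, and then of $\beta(U)=\max_\preceq\operatorname{B}(U)$. Write the root as $T=T'\cup\{k\}$ with $T'$ symmetric, or $T=T'\cup\{\tfrac{k}{2},k\}$ with $T'$ pseudo-symmetric, where in both cases $T'\in\mathcal{I}(k)$ and $\operatorname{F}(T')=k$ (Remark~\ref{rem:EI(k)}). By the description of children preceding this subsection, $U$ is a child of $T$ exactly when $U\in\mathcal{P}(k)$, $x\in\operatorname{SG}(T)$ and $x=\beta(U)$. First I would dispose of the ``base conditions''. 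Since $x\in\operatorname{SG}(T)\subseteq\CaH(T)$ and $T$ is $k$-positioned, $k-x\in T\subseteq U$ automatically, and $x\neq\tfrac{k}{2}$ for every $x\in\operatorname{SG}(T)$ because $\tfrac{k}{2}$ is never a gap of $T$ (it lies in $T'$ in the symmetric case, or in $T$ in the pseudo-symmetric case). Hence $x\in\operatorname{B}(U)$ is equivalent, through Lemma~\ref{lemma:technical}, to $x\notin\operatorname{M}(T)\cup\operatorname{Maximals}_{\leq_\CaC}(\CaH(T))$; under these same hypotheses Lemma~\ref{lemma:C-and-min-minus-x} gives $\operatorname{C}(U)=\operatorname{C}(T)$ and $\operatorname{M}(U)=\operatorname{M}(T)$, so $|\operatorname{M}(U)|+|\operatorname{C}(U)|=|\operatorname{I}_\CaC(k)|$ and $U$ inherits $k$-positionedness from $T$, whence $U\in\mathcal{P}(k)$. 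Thus the base conditions are precisely what guarantees $x\in\operatorname{B}(U)$ together with $U\in\mathcal{P}(k)$.

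The crucial structural step is to prove $\operatorname{B}(U)\subseteq\{x,k-x\}$. If $y\in\operatorname{B}(U)$ with $y\neq x$, then $y\in\operatorname{msg}(U)\cap T\subseteq\operatorname{msg}(T)$, while $\operatorname{B}(T)=\emptyset$ by Proposition~\ref{prop:empty} since $T\in\operatorname{EI}(k)$. Comparing the defining conditions of $\operatorname{B}(U)$ and $\operatorname{B}(T)$, the only ways $y$ can enter $\operatorname{B}(U)$ but not $\operatorname{B}(T)$ are that $y$ lost minimality only after adjoining $x$ (forcing $y\in\operatorname{Minimals}_{\leq_\CaC}(T\setminus\{0\})$ with $x<_\CaC y$), or that $k-y\notin T$ yet $k-y\in U$, forcing $k-y=x$, i.e. $y=k-x$. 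The first possibility is excluded because $x\notin\operatorname{M}(T)$ yields $s\in T\setminus\{0\}$ with $s<_\CaC x<_\CaC y$, contradicting minimality of $y$ in $T\setminus\{0\}$. Hence $\operatorname{B}(U)\subseteq\{x,k-x\}$, and since $k-x\prec x\iff 2x\succ k\iff x\succ\tfrac{k}{2}$, we obtain $\beta(U)=x$ iff either $k-x\notin\operatorname{B}(U)$ or $x\succ\tfrac{k}{2}$.

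It then remains to decide when $k-x\in\operatorname{B}(U)$. The requirement $k-x\in\operatorname{C}(U)\setminus\operatorname{Minimals}_{\leq_\CaC}(U\setminus\{0\})$ holds unconditionally: by Lemma~\ref{lemma:C-and-min-minus-x} it may be read inside $T$, and then Lemma~\ref{lemma:min-max} applied to $s=k-x\in\operatorname{I}_T(k)$, together with the bijection $\phi_T$ of Proposition~\ref{prop:map-MC}, translate $k-x\notin\operatorname{Minimals}_{\leq_\CaC}(T\setminus\{0\})$ and $k-x\in\operatorname{C}(T)$ into the base conditions $x\notin\operatorname{Maximals}_{\leq_\CaC}(\CaH(T))$ and $x\notin\operatorname{M}(T)$, respectively. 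Therefore $k-x\in\operatorname{B}(U)$ is governed solely by whether $k-x\in\operatorname{msg}(U)$, and the heart of the argument is the claim
\[
k-x\notin\operatorname{msg}(U)\iff 3x=k\ \text{ or }\ 4x=k.
\]
The implications $3x=k\Rightarrow k-x=2x=x+x$ and $4x=k\Rightarrow k-x=3x=x+2x$ (with $2x\in T$) are immediate. For the converse I would write $k-x=p+q$ with $p,q\in U\setminus\{0\}$ and argue by cases using that $x\in\operatorname{PF}(T)$ and the (pseudo-)symmetry of $T'$: if $p,q\in T'\setminus\{0\}$ then $k-p=x+q\in T$ by the pseudo-Frobenius property, while $k-p\in\CaH(T)$ by (pseudo-)symmetry, a contradiction; and the decompositions involving $x$ or $\tfrac{k}{2}$ collapse to $3x=k$ or $4x=k$ once one uses that $2x\in T'$ forces $k-2x\in\CaH(T')$.

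The main obstacle is exactly this last case analysis. It is delicate because the hypothesis $x\prec\tfrac{k}{2}$ concerns the term order $\preceq$, whereas membership of $k-2x$, $\tfrac{k}{2}-x$, etc.\ in $\CaC$ and in $T$ is controlled by the partial order $\leq_\CaC$ and by the fine structure of $T'$; one must separate the symmetric case (where $\tfrac{k}{2}\notin\CaC$, so the branch $4x=k$ cannot occur) from the pseudo-symmetric case (where $\tfrac{k}{2}\in T$, which is precisely what produces the $4x=k$ branch), and in each exhaust all admissible two-term decompositions of $k-x$. Once the displayed equivalence is in hand, it combines with the reduction of the previous paragraph to give $k-x\in\operatorname{B}(U)\iff 3x\neq k$ and $4x\neq k$; hence $\beta(U)=x$ if and only if $x\succ\tfrac{k}{2}$, $3x=k$ or $4x=k$, which is the assertion.
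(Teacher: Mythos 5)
Your proposal is correct, and it follows a genuinely different route from the paper's proof. The paper treats the two directions separately: for necessity it shows $k-g\in\operatorname{msg}(T)$ using the unique-expression characterizations of Propositions~\ref{Prop:symm&k-positioned} and~\ref{Prop:PEPSY&k-positioned}, then invokes \cite[Lemma 3]{Wilf-Vigneron} to write a generating set of $T$ in terms of $\operatorname{msg}(T\cup\{g\})=\{n_1,\ldots,n_r,g\}$ and decides case by case which of those generators $k-g$ can equal; for sufficiency it separately eliminates a hypothetical competitor $y=\beta(T\cup\{x\})\neq x$. Your single structural inclusion $\operatorname{B}(T\cup\{x\})\subseteq\{x,k-x\}$ --- deduced from $\operatorname{B}(T)=\emptyset$ (Proposition~\ref{prop:empty}), the inclusion $\operatorname{C}(T\cup\{x\})\subseteq\operatorname{C}(T)$, and the fact that $x\notin\operatorname{M}(T)$ kills the ``lost minimality'' alternative --- does the work of both of the paper's case analyses simultaneously, and it reduces the theorem to the equivalence $k-x\notin\operatorname{msg}(T\cup\{x\})\Leftrightarrow 3x=k \text{ or } 4x=k$, which you prove by decomposing $k-x=p+q$ inside $T'\cup\{x,\tfrac{k}{2},k\}$ and exploiting the pseudo-Frobenius property of $x$ and the (pseudo-)symmetry of $T'$, where the paper instead goes through the Wilf--Vigneron generating set. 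I verified that your sketched case analysis does close; the one case your stated tools do not literally cover, namely $p=\tfrac{k}{2}$ and $q\in T'\setminus\{0\}$, is handled by the same mechanism as your first case ($x+\tfrac{k}{2}=k-q$ lies in $T$ by the pseudo-Frobenius property of $x$, yet lies in $\CaH(T)$ because $q\in T'\setminus\{0\}$ and $k\in\CaH(T')$, a contradiction). One slip worth correcting: you cannot apply Lemma~\ref{lemma:min-max} to $T$ with $s=k-x$, since $s\in\operatorname{I}_T(k)$ demands $k-s=x\in T$, which fails here; apply it instead to the primary positioned semigroup $U=T\cup\{x\}$, where $s=k-x$ legitimately lies in $\operatorname{I}_U(k)$ and $x\notin\CaH(U)$ immediately yields $k-x\notin\operatorname{Minimals}_{\leq_\CaC}\bigl(U\setminus\{0\}\bigr)$, which is all your argument needs.
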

\begin{proof}
Consider the set $G=\{g\in SG(T) \mid g=\beta(T\cup \{g\})\}$. Let $S$ be a child of $T$, by definition, $S=T\cup\{g\}$ where $g\in G$. Since $g\in \operatorname{B}(T\cup \{g\})$, we have $g\in \operatorname{C}(T\cup \{g\})\setminus \{\operatorname{Minimals}_{\leq_\CaC}\bigl((T  \cup \{g\})\setminus \{0\}\bigr)$. By Lemma~\ref{lemma:technical}, $g\notin \operatorname{M}(T)\cup \operatorname{Maximals}_{\leq_\CaC}(\CaH(T))$. So, we need to prove that $g\succ \frac{k}{2}$, $3g=k$, or $4g=k$. Since $g=\beta( T\cup\{g\})$ we have that  $k-g\in T$. Let us prove that $k-g\in \operatorname{msg}(T)$. Suppose that $k-g=n+t$ for some $n\in \operatorname{msg}(T)$ and $t\in T\setminus \{0\}$. In particular, $k=(g+n)+t$ and $k=(g+t)+n$ are different expressions of $k$ in $T$. By Proposition~\ref{Prop:symm&k-positioned}, we obtain that $T$ is a PEPSY-semigroup, and by applying Proposition \ref{Prop:PEPSY&k-positioned}, it follows that $g+n=g+t$ or $g+n=n$. Trivially, $g+n=n$ is not possible since $g\neq 0$. So, $n=t$ and $k=(g+n)+n$, with $n=\frac{k}{2}$ and $g+n=\frac{k}{2}$, which is impossible. Therefore, $k-g\in \operatorname{msg}(T)$.
 Moreover, let us see that $k-g\in \operatorname{C}(T\cup \{g\})\setminus \operatorname{Minimals}_{\leq_\CaC}\bigl((T  \cup \{g\})\setminus \{0\}\bigr)$. In fact, if $k-g\notin \operatorname{C}(T\cup \{g\})$, then $g=k-(k-g)\in \operatorname{M}(T\cup \{g\})$, since $T$ is primary positioned. If $k-g\in \operatorname{Minimals}\bigl((T  \cup \{g\})\setminus \{0\}\bigr)$, then $g\in \operatorname{Maximals}_{\leq_\CaC}(\CaH(T\cup \{g\}))$, by Lemma~\ref{lemma:min-max}. In both cases we obtain $g\in \CaH(T\cup\{g\})$, that is a contradiction. Suppose now $\operatorname{msg}(T\cup \{g\})=\{n_1,\ldots,n_r,g\}$, by \cite[Lemma 3]{Wilf-Vigneron} we have that $\{n_1,\ldots,n_r,g+n_1,\ldots,g+n_r, 2g, 3g\}$ is a system of generators of $T$. Assume $k-g=n_i$ for some $i\in \{1,\ldots,r\}$. It follows that $k-g\in \operatorname{msg}(T\cup \{g\})$. In particular, $k-g\in \operatorname{B}(T\cup \{g\})$ and by the maximality of $g$ we obtain $g\succ k-g$. By the proof of Lemma~\ref{lem:k/2}, we obtain $g\succ \frac{k}{2}$. Now assume $k-g=g+n$ with $n\in \{n_1,\ldots,n_r, g, 2g\}$. If $k=2g+n_i$ for some $i\in \{1,\ldots,r\}$, then $2g=n_i=\frac{k}{2}$ since $2g, n_i\in T$ and $T\in \operatorname{EI}(k)$, contradicting $n_i\in \operatorname{msg}(T\cup \{g\})$. In the other cases, we obtain $k=3g$ or $k=4g$, and we conclude.

Conversely, suppose that $x\in \operatorname{SG}(T)$ $x\notin\operatorname{M}(T)\cup\operatorname{Maximals}_{\leq_\CaC}(\CaH(S)))$, and $x\succ \frac{k}{2}$, $3x=k$ or $4x=k$, and let us prove that $x\in G$. So, we have to show that $x=\beta(T\cup \{x\})$. Since $x\in \operatorname{SG}(T)$ we know that $T\cup \{x\}$ is a $\CaC$-semigroup and $x\in \operatorname{msg}(S)$. Furthermore, $x\ne \frac{k}{2}$. This fact, together with the property of being primary positioned for $k$, means that $k-x\in T\subset T\cup \{x\}$ since $x\in \CaH(T)$.  By applying Lemma \ref{lemma:technical} we deduce that $x\in \operatorname{B}(T\cup\{x\})$. Suppose that $y=\beta(T\cup \{x\})$ with $y\neq x$. Since $y\in \operatorname{B}(T\cup \{x\})$ we have $y\neq \frac{k}{2}$ and $k-y\in T\cup \{x\}$. Assume first that $k-y \in T$. Since $y\in \operatorname{msg}(T\cup \{x\})\setminus \{x\}$, we obtain that $y\in T$. In particular, $k=y+t$ with $y,t\in T\setminus \{0,\frac{k}{2}\}$, which is a contradiction of $T\in \operatorname{EI}(k)$ by Propositions~\ref{Prop:symm&k-positioned} and \ref{Prop:PEPSY&k-positioned}. If $k-y=x$ we distinguish two cases. On one hand, we assume that $\lambda x=k$ with $\lambda\in \{3,4\}$, then $y=(\lambda-1)x$ and therefore $y\notin\operatorname{msg}(T\cup\{x\})$, a contradiction. On the other hand, if $x\succ \frac{k}{2}$, then by the proof of  Lemma \ref{lem:k/2} we know that $y=k-x\prec \frac{k}{2}\prec x$, contradicting the maximality of $y$. So,  $x=\beta(T\cup \{x\})$ and $x\in G$. Hence, we conclude that $T\cup\{x\}$ is a child of $T$.
\end{proof}

\subsection{The children of a non-root Vertex}
Now our aim is to provide a result analogous to Theorem~\ref{Thrm:FirstStep} for the $\CaC$-semigroups $S\in \mathcal{P}(k)\setminus \operatorname{EI}(k)$. First, we give some preliminary Lemmas, which highlight the role played by $\beta(S)$ and help us to formulate the general result.

\begin{lemma}\label{lem:x>beta}
Let $S\in \mathcal{P}(k)\setminus \operatorname{EI}(k)$. Suppose $x\in \operatorname{SG}(S)$ such that 
$x\notin \operatorname{M}(S)\cup \operatorname{Maximals}_{\leq_\CaC}(\CaH(S))$ and $x\succ \beta(S)$. Then $S\cup \{x\}$ is a child of $S$.
\end{lemma}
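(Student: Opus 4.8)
The plan is to verify the two facts that together encode ``$S\cup\{x\}$ is a child of $S$'': that $S\cup\{x\}\in\mathcal{P}(k)\setminus\operatorname{EI}(k)$ (so that $\beta(S\cup\{x\})$ is defined), and that $\beta(S\cup\{x\})=x$, which is precisely the statement $\Psi_k(S\cup\{x\})=S$. Recall that the children of $S$ are exactly the $S\cup\{x\}$ with $x\in\operatorname{SG}(S)$ and $x=\beta(S\cup\{x\})$, so these are the right targets.

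First I would check that $S\cup\{x\}$ is a primary positioned $\CaC$-semigroup for $k$. It is a $\CaC$-semigroup since $x\in\operatorname{SG}(S)$, and $\CaH(S\cup\{x\})=\CaH(S)\setminus\{x\}$; hence for every $h\in\CaH(S\cup\{x\})$ we have $h\in\CaH(S)$ and, as $S$ is $k$-positioned, $k-h\in S\subseteq S\cup\{x\}$, so $S\cup\{x\}$ is $k$-positioned. Because $x\notin\operatorname{M}(S)\cup\operatorname{Maximals}_{\leq_\CaC}(\CaH(S))$, Lemma~\ref{lemma:C-and-min-minus-x} applies and yields $\operatorname{C}(S\cup\{x\})=\operatorname{C}(S)$, $\operatorname{M}(S\cup\{x\})=\operatorname{M}(S)$, and equality of the corresponding sets of $\leq_\CaC$-minimal nonzero elements. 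Consequently $|\operatorname{M}(S\cup\{x\})|+|\operatorname{C}(S\cup\{x\})|=|\operatorname{M}(S)|+|\operatorname{C}(S)|=|\operatorname{I}_\CaC(k)|$, so $S\cup\{x\}$ is primary positioned.

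Next I would prove $x\in\operatorname{B}(S\cup\{x\})$. Indeed $x\in\operatorname{msg}(S\cup\{x\})$ since $x$ is a special gap just adjoined; $x\neq\frac{k}{2}$, for otherwise $k-x=x\notin S$ would contradict $k$-positionedness; $k-x\in S\subseteq S\cup\{x\}$ because $x\in\CaH(S)$; and by Lemma~\ref{lemma:technical} the hypothesis $x\notin\operatorname{M}(S)\cup\operatorname{Maximals}_{\leq_\CaC}(\CaH(S))$ is equivalent to $x\in\operatorname{C}(S\cup\{x\})\setminus\operatorname{Minimals}_{\leq_\CaC}\bigl((S\cup\{x\})\setminus\{0\}\bigr)$. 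Thus $\operatorname{B}(S\cup\{x\})\neq\emptyset$, and Proposition~\ref{prop:empty} gives $S\cup\{x\}\in\mathcal{P}(k)\setminus\operatorname{EI}(k)$, so $\beta(S\cup\{x\})$ exists.

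It then remains to show $x=\max_{\preceq}\operatorname{B}(S\cup\{x\})$. Let $z\in\operatorname{B}(S\cup\{x\})$ with $z\neq x$; I must prove $z\prec x$. Since $z\in\operatorname{msg}(S\cup\{x\})$ and $z\neq x$, one has $z\in\operatorname{msg}(S)$ (any minimal generator of $S\cup\{x\}$ other than $x$ already lies in $\operatorname{msg}(S)$); moreover Lemma~\ref{lemma:C-and-min-minus-x} gives $z\in\operatorname{C}(S)\setminus\operatorname{Minimals}_{\leq_\CaC}(S\setminus\{0\})$, and $z\neq\frac{k}{2}$. Hence $z$ can fail to lie in $\operatorname{B}(S)$ only through the requirement $k-z\in S$. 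From $z\in\operatorname{B}(S\cup\{x\})$ we know $k-z\in S\cup\{x\}$, so either $k-z\in S$ or $k-z=x$. In the first case $z\in\operatorname{B}(S)$, whence $z\preceq\beta(S)\prec x$ by the hypothesis $x\succ\beta(S)$, and we are done. The remaining case $k-z=x$, i.e. $z=k-x$, is the main obstacle: here I must show $k-x\prec x$. Via the order-reversing involution $w\mapsto k-w$ on $\operatorname{I}_\CaC(k)$ extracted from the proof of Lemma~\ref{lem:k/2}, this is equivalent to $x\succ\frac{k}{2}$, so the task reduces to excluding the possibility that $x\prec\frac{k}{2}$ while $k-x\in\operatorname{B}(S\cup\{x\})$. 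I expect to rule this out by combining $2x\in S$ (from $x\in\operatorname{SG}(S)$) with $x\notin\operatorname{M}(S)$, which supplies some $s\in S\setminus\{0\}$ with $s\leq_\CaC x$, and with the observation that $k-x\in\operatorname{msg}(S\cup\{x\})$ forces $k-2x\notin S\cup\{x\}$; the interaction of these three constraints should contradict $x\prec\frac{k}{2}$, mirroring the obstruction one already sees when testing small cones where the condition $x\notin\operatorname{M}(S)$ is exactly what fails. This last reflection step is the delicate point, while everything preceding it is routine bookkeeping with the sets $\operatorname{C}$, $\operatorname{M}$ and $\operatorname{B}$.
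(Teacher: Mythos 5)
Your write-up is correct, and matches the paper, up to and including your ``Case A'': the preservation of $\operatorname{C}$, $\operatorname{M}$ and $\operatorname{Minimals}_{\leq_\CaC}$ via Lemmas~\ref{lemma:C-and-min-minus-x} and~\ref{lemma:technical}, the verification that $x\in\operatorname{B}(S\cup\{x\})$, and the argument that any $z\in\operatorname{B}(S\cup\{x\})$ with $k-z\in S$ lies in $\operatorname{B}(S)$ and hence satisfies $z\preceq\beta(S)\prec x$ are exactly the paper's steps. But your ``Case B'' ($z=k-x$) is a genuine gap, not a routine verification you postponed: the three constraints you propose to play off against each other --- $2x\in S$, the existence of $s\in S\setminus\{0\}$ with $s\leq_\CaC x$, and $k-2x\notin S\cup\{x\}$ --- are mutually consistent with $x\prec\frac{k}{2}$, so no contradiction can be extracted from them. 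The paper closes this case by a different idea, which is what your proof lacks: it passes to the root $T\in\operatorname{EI}(k)$ of the tree containing $S$, asserts that both $x$ and $\beta(S\cup\{x\})$ lie in $\CaH(T)$, and then uses the $k$-positionedness of $T$ to get $k-\beta(S\cup\{x\})\in T$, contradicting $k-\beta(S\cup\{x\})=x\in\CaH(T)$.

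You should know, though, that your instinct that this point is ``delicate'' is more than justified: the paper's assertion $\beta(S\cup\{x\})\in\CaH(T)$ is itself unproved ($\beta(S\cup\{x\})$ is a minimal generator of $S$, hence an element of $S$, and nothing forces it to be one of the gaps of the root), and Case B can actually occur. Take $\CaC=\mathbb{N}$, $k=33$, and $S=\{0,10,11,17,18,19,20,21,22\}\cup\{n\in\mathbb{N}\mid n\geq 24\}$, so that $\CaH(S)=\{1,\dots,9,12,13,14,15,16,23\}$. Then $S$ is $33$-positioned and $|\operatorname{M}(S)|+|\operatorname{C}(S)|=10+24=|\operatorname{I}_{\mathbb{N}}(33)|$, so $S\in\mathcal{P}(33)$; moreover $\operatorname{B}(S)=\{11\}$ (indeed $33-11=22\in S$, while $33-17$, $33-18$, $33-19$ are gaps), hence $\beta(S)=11$ and $S\notin\operatorname{EI}(33)$. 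The gap $x=14$ satisfies every hypothesis of the lemma: $14\in\operatorname{SG}(S)$, $14\notin\operatorname{M}(S)\cup\operatorname{Maximals}_{\leq_\CaC}(\CaH(S))=\{0,\dots,9\}\cup\{23\}$, and $14\succ 11=\beta(S)$. Yet $19\in\operatorname{msg}(S\cup\{14\})$ (since $9,8,5\notin S$), $19\in\operatorname{C}(S\cup\{14\})\setminus\operatorname{Minimals}_{\leq_\CaC}\bigl((S\cup\{14\})\setminus\{0\}\bigr)$, and $33-19=14\in S\cup\{14\}$, so $19\in\operatorname{B}(S\cup\{14\})$ and $\beta(S\cup\{14\})=19\neq 14$: thus $S\cup\{14\}$ is \emph{not} a child of $S$ (its parent under $\Psi_{33}$ is $(S\cup\{14\})\setminus\{19\}$, which belongs to a different tree of the forest). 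So the hole in your argument cannot be patched --- the statement itself fails here --- and the same example breaks the paper's own proof precisely at the unjustified claim $\beta(S\cup\{x\})\in\CaH(T)$: for the root $T=S\setminus\{11\}$ one has $\beta(S\cup\{14\})=19\in T$.
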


\begin{proof}
To obtain our result, it is sufficient to prove that $x=\beta(S\cup \{x\})$. Since $S$ is $k$-positioned and $x$ is a gap of $S$, we know that $x\neq \frac{k}{2}$ and $k-x\in S\subseteq S\cup \{x\}$. Trivially, $x\in \operatorname{msg}(S\cup \{x\})$ and by Lemma~\ref{lemma:technical}, we have $ x\in \operatorname{C}(S\cup\{x\})\setminus\operatorname{Minimals}_{\leq_\CaC}\bigl((S  \cup \{x\})\setminus \{0\}\bigr)$. So, $x\in \operatorname{B}(S\cup \{x\})$. Suppose that $x\neq \beta(S\cup \{x\})$, it follows that $x\prec \beta(S\cup \{x\})$. By definition we have $\beta(S\cup \{x\})\in \operatorname{msg}(S\cup \{x\})\subseteq \operatorname{msg}(S)\cup\{x\}$. Since we assumed $\beta(S\cup \{x\})\neq x$, we deduce $\beta(S\cup \{x\})\in \mathrm{msg}(S)$. Considering also Lemma~\ref{lemma:C-and-min-minus-x}, we have $\beta(S\cup \{x\}) \in \operatorname{C}(S\cup\{x\})\setminus\operatorname{Minimals}_{\leq_\CaC}\bigl((S  \cup \{x\})\setminus \{0\}\bigr)= \operatorname{C}(S)\setminus\operatorname{Minimals}_{\leq_\CaC}(S\setminus\{0\})$. 

Moreover, $k-\beta(S\cup \{x\})\in S\cup \{x\}$. In particular, we have two possibilities, $k-\beta(S\cup \{x\})=x$ or $k-\beta(S\cup \{x\})\in S$. In the first case, observe that if $T\in \operatorname{EI}(k)$ is the semigroup such that $S\in \mathcal{P}_T(k)$, that is, $T$ is the root of the graph having $S$ as a vertex, then $x,\beta(S\cup \{x\})\in \CaH(T)$ and since $T$ is $k$-positioned then $k-\beta(S\cup \{x\})\in T$. This means the equality $k-\beta(S\cup \{x\})=x$ is not possible. Therefore, $k-\beta(S\cup \{x\})\in S$ and this implies $\beta(S\cup \{x\})\in \operatorname{B}(S)$. As a consequence, $\beta(S\cup \{x\})\preceq \beta(S)\prec x$, a contradiction. So, $x= \beta(S\cup \{x\})$ and we conclude.
\end{proof}

Nevertheless, if $S\in \mathcal{P}(k)\setminus \operatorname{EI}(k)$ and $x\in \operatorname{SG}(S)$, then it is possible to obtain that $S\cup \{x\}$ is a child of $S$ and $x\prec \beta(S)$, as shown in the following example.

\begin{example}
Let $S$ be the GNS in $\mathbb{N}^2$ having set of gaps:
\begin{align*}
\CaH(S)=\lbrace &(0,1),(0,2),(0,3),(0,4),(0,5),(1,0),(1,1),(1,2),(1,3),\\
& (1,4),(1,5),(2,2),(2,3),(2,4),
(2,5),(3,0),(3,3),(3,4),(4,5)\rbrace.
\end{align*}
The $\mathbb{N}^2$-semigroup $S$ is primary positioned for $k=(6,5)$. It is also possible to check that $\operatorname{SG}(S)=\{$(1, 5), (2, 2), (2, 3), (3, 0), (3, 3), (3, 4), (4, 5)$\}$. Moreover, the set of minimal generators is:
\begin{align*}
    \operatorname{mgs}(S) = \lbrace 
    & (0,6), (0,7), (0,9), (0,8), (0,10), (0,11),(2,0), \\
    &  (2,1), (1,6),(1,7), (3,1), (1,8), (3,2),(1,9), (1,10),  \\
    & (1,11), (3,5), (4,3), (4,4), (5,0), (5,4) 
    \rbrace.
\end{align*}
Assume $\preceq$ is the graded lexicographic order. In this case it is possible to compute that $\operatorname{B}(S)=\{(2,1),(4,4)\}$, $\beta(S)=(4,4)$ and the children of $S$ are $S\cup \{(2,2)\}$ and $S\cup \{(2,3)\}$.
\end{example}

The example above satisfies the following necessary condition for $S\cup\{x\}$ to be a child of $S$ with $x\prec \beta(S)$.

\begin{lemma}\label{lem:x<beta}
Let $S\in \mathcal{P}(k)\setminus \operatorname{EI}(k)$ and $x\in \operatorname{SG}(S)$ such that $x\prec \beta(S)$. If $x=\beta(S\cup \{x\})$, then for all $y\in \operatorname{B}(S)$ with $x\prec y$ it holds $y-x\in S$ or $y=2x$.
\end{lemma}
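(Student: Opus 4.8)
The plan is to fix an arbitrary $y\in\operatorname{B}(S)$ with $x\prec y$ and to decide the fate of $y$ as a generator of the enlarged semigroup $S\cup\{x\}$, which is a $\CaC$-semigroup because $x\in\operatorname{SG}(S)$. The guiding principle is that $x=\beta(S\cup\{x\})=\max_{\preceq}\operatorname{B}(S\cup\{x\})$, so every element of $\operatorname{B}(S\cup\{x\})$ is $\preceq$-below $x$; in particular, if I manage to show $y\in\operatorname{B}(S\cup\{x\})$ I will contradict $x\prec y$. Accordingly, the argument is organised as a dichotomy on whether $y$ is still a minimal generator of $S\cup\{x\}$: the case $y\in\operatorname{msg}(S\cup\{x\})$ will be excluded by this contradiction, and the remaining case $y\notin\operatorname{msg}(S\cup\{x\})$ will directly yield the desired conclusion $y-x\in S$ or $y=2x$.

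First I would harvest the structural information hidden in the hypothesis $x=\beta(S\cup\{x\})$. Since $x\in\operatorname{B}(S\cup\{x\})$, the definition of $\operatorname{B}$ gives $x\in\operatorname{C}(S\cup\{x\})\setminus\operatorname{Minimals}_{\leq_\CaC}\bigl((S\cup\{x\})\setminus\{0\}\bigr)$, and Lemma~\ref{lemma:technical} translates this into $x\notin\operatorname{M}(S)\cup\operatorname{Maximals}_{\leq_\CaC}(\CaH(S))$. This is exactly the hypothesis required to invoke Lemma~\ref{lemma:C-and-min-minus-x}, which then supplies the two invariance identities $\operatorname{C}(S\cup\{x\})=\operatorname{C}(S)$ and $\operatorname{Minimals}_{\leq_\CaC}\bigl((S\cup\{x\})\setminus\{0\}\bigr)=\operatorname{Minimals}_{\leq_\CaC}(S\setminus\{0\})$. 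These identities are the engine of the proof, since they let me transport membership in $\operatorname{B}(S)$ to membership in $\operatorname{B}(S\cup\{x\})$ almost verbatim.

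Next, assuming for contradiction that $y\in\operatorname{msg}(S\cup\{x\})$, I would verify the four defining conditions of $\operatorname{B}(S\cup\{x\})$ for $y$: the generator condition is the assumption; the conditions $y\in\operatorname{C}(S\cup\{x\})$ and $y\notin\operatorname{Minimals}_{\leq_\CaC}\bigl((S\cup\{x\})\setminus\{0\}\bigr)$ follow from $y\in\operatorname{B}(S)$ together with the two identities above; the condition $k-y\in S\cup\{x\}$ holds because $k-y\in S$; and $y\neq\frac{k}{2}$ is inherited from $y\in\operatorname{B}(S)$. Hence $y\in\operatorname{B}(S\cup\{x\})$, forcing $y\preceq x$ and contradicting $x\prec y$. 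Therefore $y\notin\operatorname{msg}(S\cup\{x\})$.

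Finally, I would exploit that $y\in\operatorname{msg}(S)$ while $y\notin\operatorname{msg}(S\cup\{x\})$. Writing $y$ as a sum of two nonzero elements of $S\cup\{x\}$, minimality in $S$ prevents both summands from lying in $S$, so $x$ must appear as a summand: $y=x+c$ with $c\in(S\cup\{x\})\setminus\{0\}$. If $c\in S$ then $y-x\in S$, and if $c=x$ then $y=2x$, which is precisely the claim. The step I expect to be the main obstacle is the second paragraph: one must be certain that the mere hypothesis $x=\beta(S\cup\{x\})$ genuinely delivers $x\notin\operatorname{M}(S)\cup\operatorname{Maximals}_{\leq_\CaC}(\CaH(S))$, since the whole transfer of the four membership conditions rests on the invariance identities of Lemma~\ref{lemma:C-and-min-minus-x} that this exclusion unlocks; once those identities are in place, both the contradiction and the closing decomposition of $y$ are routine.
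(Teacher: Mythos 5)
Your proposal is correct and follows essentially the same route as the paper's proof: both extract $x\notin\operatorname{M}(S)\cup\operatorname{Maximals}_{\leq_\CaC}(\CaH(S))$ from $x=\beta(S\cup\{x\})$ via Lemma~\ref{lemma:technical}, invoke Lemma~\ref{lemma:C-and-min-minus-x} for the invariance identities, and derive the contradiction $y\in\operatorname{B}(S\cup\{x\})$ with $y\succ x=\beta(S\cup\{x\})$. The only difference is organizational (you rule out $y\in\operatorname{msg}(S\cup\{x\})$ first and then decompose $y$, whereas the paper negates the conclusion and deduces $y\in\operatorname{msg}(S\cup\{x\})$), and you usefully spell out the decomposition step the paper leaves as ``not difficult to argue.''
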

\begin{proof}
Let $y\in \operatorname{B}(S)$ with $x\prec y$. First, consider that $y\in \operatorname{msg}(S)$. Suppose that $y\neq 2x$ and $y-x\notin S$. In this case, it is not difficult to argue that $y\in \operatorname{msg}(S\cup \{x\})$. Considering $x=\beta(S\cup \{x\})$, by definition and Lemma~\ref{lemma:technical} we have $x\notin \operatorname{M}(S)\cup \operatorname{Maximals}_{\leq_\CaC}(\CaH(S))$. Whence, by Lemma~\ref{lemma:C-and-min-minus-x}, $\operatorname{C}(S)\setminus\operatorname{Minimals}_{\leq_\CaC}(S\setminus\{0\}) = \operatorname{C}(S\cup\{x\})\setminus\operatorname{Minimals}_{\leq_\CaC}\bigl((S  \cup \{x\})\setminus \{0\}\bigr)$. In particular, we deduce $y\in \operatorname{C}(S)\setminus\operatorname{Minimals}_{\leq_\CaC}(S\setminus\{0\}) = \operatorname{C}(S\cup\{x\})\setminus\operatorname{Minimals}_{\leq_\CaC}\bigl((S  \cup \{x\})\setminus \{0\}\bigr)$. Furthermore, $y\neq \frac{k}{2}$ and $k-y\in S\subseteq S\cup \{x\}$. So, putting together all these conditions, we have $y\in \operatorname{B}(S\cup \{x\})$. As a consequence, $y\preceq \beta(S\cup \{x\})=x$, contradicting $x\prec y$. We conclude that $y=2x$ or $y-x\in S$. 
\end{proof}

We can gather the previous conditions to obtain the following characterization.

\begin{theorem}
Let $S\in \mathcal{P}(k)\setminus \operatorname{EI}(k)$. Then, all the children of $S$ are the $\CaC$-semigroups $S\cup\{x\}$ such that $x\in \operatorname{SG}(S)$, $x\notin\operatorname{M}(S)\cup\operatorname{Maximals}_{\leq_\CaC}(\CaH(S))$, and one of the following holds:
\begin{itemize}
\item $x\succ \beta(S)$.
\item $x\prec \beta(S)$ and for all $y\in \operatorname{B}(S)$ with $x\prec y$ it holds $y-x\in S$ or $y=2x$.
\end{itemize}
\end{theorem}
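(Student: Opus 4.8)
The plan is to read the statement as the characterization of exactly those $x$ for which $\beta(S\cup\{x\})=x$. Indeed, by the discussion preceding the two subsections, the children of $S$ are precisely the semigroups $S\cup\{x\}$ with $x\in\operatorname{SG}(S)$ and $x=\beta(S\cup\{x\})$, and in that situation Lemma~\ref{lemma:technical} automatically forces $x\notin\operatorname{M}(S)\cup\operatorname{Maximals}_{\leq_\CaC}(\CaH(S))$. So both directions reduce to deciding the equality $\beta(S\cup\{x\})=x$, and the two bullets correspond to the dichotomy $x\succ\beta(S)$ versus $x\prec\beta(S)$; the case $x=\beta(S)$ cannot occur, since $x\in\CaH(S)$ while $\beta(S)\in\operatorname{msg}(S)\subseteq S$.

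For necessity I would take a child $S\cup\{x\}$, so $x=\beta(S\cup\{x\})$ and $x\in\operatorname{SG}(S)$, $x\notin\operatorname{M}(S)\cup\operatorname{Maximals}_{\leq_\CaC}(\CaH(S))$. If $x\succ\beta(S)$ the first bullet holds; if $x\prec\beta(S)$, then Lemma~\ref{lem:x<beta} applies verbatim and yields that every $y\in\operatorname{B}(S)$ with $x\prec y$ satisfies $y-x\in S$ or $y=2x$, which is the second bullet. For sufficiency, the first bullet is exactly Lemma~\ref{lem:x>beta}. The remaining, and main, task is the converse of Lemma~\ref{lem:x<beta}: assuming $x\in\operatorname{SG}(S)$, $x\notin\operatorname{M}(S)\cup\operatorname{Maximals}_{\leq_\CaC}(\CaH(S))$, $x\prec\beta(S)$ and the stated condition, I must prove $\beta(S\cup\{x\})=x$. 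First I would check $T:=S\cup\{x\}\in\mathcal{P}(k)\setminus\operatorname{EI}(k)$: it is $k$-positioned because $\CaH(T)\subseteq\CaH(S)$; Lemma~\ref{lemma:C-and-min-minus-x} gives $\operatorname{C}(T)=\operatorname{C}(S)$ and $\operatorname{M}(T)=\operatorname{M}(S)$, so $|\operatorname{M}(T)|+|\operatorname{C}(T)|=|\operatorname{I}_\CaC(k)|$; and Lemma~\ref{lemma:technical} shows $x\in\operatorname{B}(T)$, whence $\operatorname{B}(T)\neq\emptyset$ and $T\notin\operatorname{EI}(k)$ by Proposition~\ref{prop:empty}. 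Since $x\in\operatorname{B}(T)$ we already have $\beta(T)\succeq x$, so it remains to rule out any $y\in\operatorname{B}(T)$ with $y\succ x$.

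Given such a $y$, I would note $y\in\operatorname{msg}(T)\setminus\{x\}\subseteq\operatorname{msg}(S)$ (removing the generator $x$), and by Lemma~\ref{lemma:C-and-min-minus-x} that $y\in\operatorname{C}(S)\setminus\operatorname{Minimals}_{\leq_\CaC}(S\setminus\{0\})$, while $k-y\in T=S\cup\{x\}$. If $k-y\in S$, then $y\in\operatorname{B}(S)$ and the hypothesis applied to $y$ gives $y-x\in S$ or $y=2x$; in either case $y-x\in T\setminus\{0\}$, so $y=x+(y-x)$ is a nontrivial factorization in $T$, contradicting $y\in\operatorname{msg}(T)$. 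The delicate case is $k-y=x$, i.e. $y=k-x$, where $y\notin\operatorname{B}(S)$ and so the hypothesis cannot be fed $y$ directly. The idea is to feed the hypothesis the element $\beta(S)\in\operatorname{B}(S)$ instead (which satisfies $x\prec\beta(S)$), obtaining $\beta(S)-x\in S$ or $\beta(S)=2x$, and to recall $k-\beta(S)\in S\setminus\{0\}$. If $\beta(S)-x\in S$, then $y=k-x=(k-\beta(S))+(\beta(S)-x)$ is a sum of two nonzero elements of $S$, contradicting $y\in\operatorname{msg}(S)$; if $\beta(S)=2x$, then $y=k-x=(k-\beta(S))+x$ is a sum of two nonzero elements of $T$, contradicting $y\in\operatorname{msg}(T)$. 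Hence no $y\succ x$ lies in $\operatorname{B}(T)$, so $\beta(T)=x$ and $S\cup\{x\}$ is a child of $S$.

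I expect the case $k-y=x$ to be the main obstacle, precisely because the natural hypothesis does not apply to $y$ itself and one must instead exploit $\beta(S)$ to manufacture a forbidden factorization of $y=k-x$; the rest is bookkeeping with Lemmas~\ref{lemma:C-and-min-minus-x} and~\ref{lemma:technical} together with the already-proved Lemmas~\ref{lem:x>beta} and~\ref{lem:x<beta}.
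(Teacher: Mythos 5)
Your proof is correct, and its overall skeleton matches the paper's: necessity comes from the observation preceding the subsections (children are exactly the $S\cup\{x\}$ with $x\in\operatorname{SG}(S)$ and $x=\beta(S\cup\{x\})$, with Lemma~\ref{lemma:technical} forcing $x\notin\operatorname{M}(S)\cup\operatorname{Maximals}_{\leq_\CaC}(\CaH(S))$) together with Lemma~\ref{lem:x<beta}; sufficiency for the first bullet is Lemma~\ref{lem:x>beta}; and sufficiency for the second bullet is a contradiction argument against $z=\beta(S\cup\{x\})\succ x$, split according to whether $k-z\in S$ or $k-z=x$, with the case $k-z\in S$ handled exactly as in the paper ($z\in\operatorname{B}(S)$ via Lemma~\ref{lemma:C-and-min-minus-x}, then the bullet-two hypothesis kills minimality of $z$ in $\operatorname{msg}(S\cup\{x\})$). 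The genuine divergence is the subcase $k-z=x$. The paper excludes it globally: it appeals to the root $T\in\operatorname{EI}(k)$ of the tree containing $S$, asserts $x,z\in\CaH(T)$, and uses $k$-positionedness of $T$ to get $k-z\in T$, hence $k-z\neq x$. You stay local: you feed $\beta(S)$ itself (which lies in $\operatorname{B}(S)$ and satisfies $x\prec\beta(S)$) into the bullet-two hypothesis, obtaining $\beta(S)-x\in S$ or $\beta(S)=2x$, and in either case you write $z=k-x$ as a sum of two nonzero elements of $S$, respectively of $S\cup\{x\}$ (using $k-\beta(S)\in S\setminus\{0\}$, which is justified since $\beta(S)\in\operatorname{C}(S)$ forces $\beta(S)\neq k$), contradicting $z\in\operatorname{msg}(S)$, respectively $z\in\operatorname{msg}(S\cup\{x\})$. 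This buys something real: the paper's assertion $z\in\CaH(T)$ is stated without proof, and it is precisely the crux --- under the case hypothesis $k-z=x$ one automatically has $z=k-x\in T$, because $x\in\CaH(S)\subseteq\CaH(T)$ and $T$ is $k$-positioned, so the paper's claim amounts to the contradiction being sought; your local argument supplies an independent justification and thus patches the weakest step of the published proof. A further small merit of your write-up is that you verify $S\cup\{x\}\in\mathcal{P}(k)\setminus\operatorname{EI}(k)$ (via Lemma~\ref{lemma:C-and-min-minus-x}, Lemma~\ref{lemma:technical} and Proposition~\ref{prop:empty}) before speaking of $\beta(S\cup\{x\})$, a well-definedness point the paper glosses over; the price is that your argument for this subcase is tied to the hypothesis $x\prec\beta(S)$, whereas the paper's root-based argument (where valid) is the one shared with Lemma~\ref{lem:x>beta}.
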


\begin{proof}
Consider the set $G=\{g\in \operatorname{SG}(S) \mid g=\beta(S\cup\{g\})\}$ and let $S\cup\{g\}$ be a child of $S$, with $g\in G$. As shown in the proof of Theorem~\ref{Thrm:FirstStep}, since $g \in \operatorname{B}(S\cup \{g\})$, by Lemma~\ref{lemma:technical}, we know that $g \notin \operatorname{M}(S) \cup \operatorname{Maximals}_{\leq_\CaC}(\CaH(S))$. If $g\succ \beta(S)$ we have nothing to prove, while if $g\prec \beta(S)$ we obtain our claim by Lemma~\ref{lem:x<beta}. So, the necessity is proved. In order to prove sufficiency, suppose that $x\in \operatorname{SG}(S)$ and $x\notin\operatorname{M}(S)\cup\operatorname{Maximals}_{\leq_\CaC}(\CaH(S)))$. If $x\succ \beta(S)$ we obtain that $x=\beta(S\cup \{x\})$ by Lemma~\ref{lem:x>beta}, and thus $x\in G$. So, assume $x\prec \beta(S)$ and for all $y\in \operatorname{B}(S)$ with $x\prec y$ it holds $y-x\in S$ or $y=2x$. Suppose that $x\notin G$, thus $x\neq \beta(S\cup \{x\})$. Observe that $x\in \operatorname{msg}(S\cup \{x\})$ and since $S$ is $k$-positioned we have $x\neq \frac{k}{2}$ and $k-x\in S\subset S\cup \{x\}$. By applying Lemma~\ref{lemma:technical} we also obtain that $x\in \operatorname{C}(S\cup\{x\})\setminus\operatorname{Minimals}_{\leq_\CaC}\bigl((S\cup\{x\})\setminus\{0\}\bigr)$. So, $x\in \operatorname{B}(S)$ and by our assumption $x\prec \beta(S\cup \{x\})$. Let us denote $z=\beta(S\cup \{x\})$. It is not difficult to see that $\operatorname{msg}(S\cup \{x\})\subseteq \operatorname{msg}(S)\cup \{x\}$. In particular, we deduce $z\in \operatorname{msg}(S)$. By definition, $k-z\in S\cup \{x\}$ and $z\neq \frac{k}{2}$. It is not possible $k-z=x$. In fact, if $T\in \operatorname{EI}(k)$ is the semigroup such that $S\in \mathcal{P}_T(k)$ then $x,z\in \CaH(T)$. As a consequence, since $T$ is $k$-positioned, we have $k-z\in T$, whence $k-z\neq x$. Therefore, $k-z\in S$. Furthermore, by definition  $z\in \operatorname{C}(S\cup\{x\})\setminus\operatorname{Minimals}_{\leq_\CaC}\bigl((S\cup\{x\})\setminus\{0\}\bigr)$, and since $x\notin \operatorname{M}(S)\cup \operatorname{Maximals}_{\leq_\CaC}(\CaH(S))$, by Lemma~\ref{lemma:C-and-min-minus-x}, we have $\operatorname{C}(S\cup\{x\})\setminus\operatorname{Minimals}_{\leq_\CaC}\bigl((S\cup\{x\})\setminus\{0\}\bigr)=\operatorname{C}(S)\setminus\operatorname{Minimals}_{\leq_\CaC}(S\setminus\{0\})$. Therefore, considering all these conditions together, we obtain $z\in \operatorname{B}(S)$. Since $x\prec z$, by hypothesis, we have $z-x\in S$ or $z=2x$ and this implies that $z\notin \operatorname{msg}(S\cup \{x\})$, which leads to a contradiction. We conclude $x=\beta(S\cup \{x\})$.
\end{proof}

\section{A procedure for computing the set $\mathcal{P}(k)$}\label{Sec6}

This section is devoted to the computation of the set $\mathcal{P}(k)$ using as tools the arguments presented in Section~\ref{section-tree}. Additionally, we provide an algorithm which allows us to show how the set  $\mathcal{P}(k)$ can be arranged in a graph. We end the work by illustrating the proposed procedures with some selected examples.
For any $k\in \CaC$, we consider
\[
G(\mathcal{P}(k))=\bigcup_{T\in \operatorname{EI}(k)}G(\mathcal{P}_T(k)).
\]
\begin{proposition}
The graph $\operatorname{G}(\mathcal{P}(k))$ is a forest, whose set of roots is $\operatorname{EI}(k)$.
\end{proposition}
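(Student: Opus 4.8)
The plan is to reduce the statement to Proposition~\ref{Prop:rootedtree} together with a partition argument, since a forest is exactly a disjoint union of trees. Proposition~\ref{Prop:rootedtree} already tells us that each $\operatorname{G}(\mathcal{P}_T(k))$ is a tree rooted at $T$. Hence it will suffice to prove two things: that the family $\{\mathcal{P}_T(k)\mid T\in \operatorname{EI}(k)\}$ is a partition of the vertex set $\mathcal{P}(k)$, and that every edge of $\operatorname{G}(\mathcal{P}(k))$ joins two vertices lying in the same $\mathcal{P}_T(k)$. Granting these, $\operatorname{G}(\mathcal{P}(k))$ is literally the disjoint union of the trees $\operatorname{G}(\mathcal{P}_T(k))$, whence a forest whose roots are the elements of $\operatorname{EI}(k)$.

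For the covering part I would simply recall the observation made just before the statement: for every $S\in \mathcal{P}(k)$ the set $\operatorname{B}(S)$ is finite, and by Proposition~\ref{prop:empty} each application of $\Psi_k$ strictly removes the minimal generator $\beta(S)$, so there is some $n\in\mathbb{N}$ with $\Psi_k^n(S)\in \operatorname{EI}(k)$. This already places $S$ in at least one set $\mathcal{P}_T(k)$.

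The heart of the argument — and the step I expect to be the main obstacle — is disjointness. The key point is that $\Psi_k$ is \emph{single-valued} on $\mathcal{P}(k)\setminus\operatorname{EI}(k)$, because $\beta(S)=\max_{\preceq}\operatorname{B}(S)$ is uniquely determined by $S$ once the term order $\preceq$ is fixed. Consequently, for any $S\in\mathcal{P}(k)$ the orbit $S,\Psi_k(S),\Psi_k^2(S),\dots$ is uniquely determined and strictly decreasing, and it must stop at the \emph{first} time it enters $\operatorname{EI}(k)$, since $\Psi_k$ cannot be applied to an element of $\operatorname{EI}(k)$. Write $T(S)$ for this unique terminal element. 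If $S\in\mathcal{P}_{T_1}(k)\cap\mathcal{P}_{T_2}(k)$, then $\Psi_k^{n_1}(S)=T_1$ and $\Psi_k^{n_2}(S)=T_2$ with $T_1,T_2\in\operatorname{EI}(k)$; but both iterations are forced to halt exactly at the first entry time into $\operatorname{EI}(k)$, so $n_1=n_2$ and $T_1=T_2=T(S)$. This gives pairwise disjointness of the sets $\mathcal{P}_T(k)$, and together with the covering part shows they partition $\mathcal{P}(k)$.

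Finally, the edge condition is immediate: if $(S_1,S_2)$ is an edge then $S_2=\Psi_k(S_1)$, so the orbits of $S_1$ and $S_2$ share the same terminal element, i.e. $T(S_1)=T(S_2)$ and both vertices lie in the same component $\mathcal{P}_{T}(k)$. Thus no edge crosses between distinct sets $\mathcal{P}_{T_1}(k)$ and $\mathcal{P}_{T_2}(k)$, the components of $\operatorname{G}(\mathcal{P}(k))$ are precisely the trees $\operatorname{G}(\mathcal{P}_T(k))$, and distinct roots $T\in\operatorname{EI}(k)$ sit in distinct components by disjointness. Everything except the determinacy of $\Psi_k$ is routine bookkeeping; that determinacy is what makes the terminal root $T(S)$ well defined and hence the whole partition work.
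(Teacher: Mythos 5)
Your proposal is correct and follows essentially the same route as the paper: it invokes Proposition~\ref{Prop:rootedtree} for each tree $\operatorname{G}(\mathcal{P}_T(k))$ and then derives disjointness of the sets $\mathcal{P}_T(k)$ from the uniqueness of $\beta(S)$ (equivalently, the single-valuedness of $\Psi_k$), which is exactly the paper's argument. Your write-up merely makes explicit the first-entry-time reasoning and the edge bookkeeping that the paper leaves implicit.
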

\begin{proof}
By Proposition \ref{Prop:rootedtree}, we know that each $G(\mathcal{P}_T(k))$ is a rooted tree. By the uniqueness of $\beta(S)$ for any $S\in \mathcal{P}(k)$, we deduce that 
$\cup_{T\in \operatorname{EI}(k)}\mathcal{P}_T(k)$ are disjoint sets. Hence, $\operatorname{G}(\mathcal{P}(k))$ is a forest and $\operatorname{EI}(k)$ is the set of roots, as these are the roots of the individual trees in the forest.
\end{proof}

In order to compute the set $\mathcal{P}(k)$ we establish the following steps:
\begin{enumerate}
\item Compute first the set $\operatorname{EI}(k)$.
\item For every $T\in \operatorname{EI}(k)$, compute the set $\mathcal{P}_T(k)$.
\item Finally, $\mathcal{P}(k)=\bigsqcup_{T\in \operatorname{EI}(k)} \mathcal{P}_T(k)$.
\end{enumerate}

For computing $\operatorname{EI}(k)$, consider the set $\mathcal{I}(k)$ of all irreducible $\CaC$-semigroup having Frobenius element equal to $k$. This set can be computed using \cite[Algorithm 1]{garcia-on-some}, by Remark \ref{rem:EI(k)} we count the set $\operatorname{EI}(k)$. If $T\in \operatorname{EI}(k)$, we can compute the set $\mathcal{P}_T(k)$ using the structure of the tree described in Section~\ref{section-tree}. We can sum up our suggested procedure in Algorithm~\ref{alg1}.

\begin{algorithm}
\caption{Algorithm for computing the set $\mathcal{P}_T(k)$} \label{alg1}
\DontPrintSemicolon
\KwData{A positive integer cone $\CaC$,  a $\CaC$-semigroup $T\in \operatorname{EI}(k)$, $k\in \CaC$,  and a term order $\preceq$.}
\KwResult{$\mathcal{P}_T(k)$}
 $A:=\{T\}$.\;
 $B:=\emptyset$.\;
 $C:=\{x\in \operatorname{SG}(T)\mid x\notin \operatorname{M}(T)\cup \operatorname{Maximals}_{\leq_\CaC}(\CaH(T))\}.$\;
 $C_1:=\{x\in C\mid x\succ \frac{k}{2}\text{ or }3x=k \text{ or } 4x=k\}.$\;
  \If{$C_1=\emptyset$}{
       \Return $\{T\}$\;
       }
   \For{$x\in C_1$}
                {$B:=B\cup \{T\cup \{x\}\}.$\;
   $\beta(T\cup \{x\}):=x$ (save this value with $T\cup \{x\}$).\;             
                }
       $A:=A\cup B$.\; 
       $B_1:=B$.\;
       \While{$B_1\neq \emptyset$}{
       $B_2:=\emptyset$.\;
       \For{$S\in B_1$}{
       $B:=\emptyset$.\;
       $C:=\{x\in \operatorname{SG}(S)\mid x\notin \operatorname{M}(S)\cup \operatorname{Maximals}_{\leq_\CaC}(\CaH(S))\}.$\;
       $C_1:=C\cap \{x\succ \beta(S)\}.$\;
       \For{$x\in C_1$}{
       $B:=B\cup \{S\cup \{x\}\}.$\;
       $\beta(S\cup \{x\}):=x$ (save this value with $S\cup \{x\}$).\;
       }
       \If{$C\setminus C_1\neq \emptyset$}{
       Compute $\operatorname{B}(S)$.\;
       \For{$x\in C\setminus C_1$}{
       \If{for all $y\in \operatorname{B}(S)$ with $x\prec y$ it holds $y-x\in S$ or $y=2x$}{$B:=B\cup \{S\cup \{x\}\}.$\;
       $\beta(S\cup \{x\}):=x$ (save this value with $S\cup \{x\}$).\;
       }
       }
       } 
       $A:=A\cup B$.\;
       $B_2:=B_2\cup B.$\;      
       }
       $B_1:=B_2$.\;
       }
       \Return $A$\;
\end{algorithm}

\begin{remark}
The instructions in lines $9,20,26$ of Algorithm~\ref{alg1} are coherent with our arguments. In fact, we know that if $S\cup \{x\}$ is a child of $S$, then $x=\beta(S\cup \{x\})$. These instructions can be useful since we need $\beta(S)$ in line 17, and it is not necessary to compute it if we save this value in the previous steps. 
\end{remark}

The following  two examples illustrate how Algorithm \ref{alg1} works. In particular, the first example emphasizes that for smaller values of $k$, the  forest $\operatorname{G}(\mathcal{P}(k))$ is relatively simple since the set $\mathcal{P}(k)$ has a few elements. If $k$ increases, more vertices emerge, making it more suitable to illustrate with a single tree rather than the entire forest. To this purpose, we present the second example.

\begin{example}
Consider the cone $\mathcal{C}=\mathbb{N}^2$ and the set $\operatorname{EI}((2,3))$. It is the set of all generalized numerical semigroups $S=T\cup \{(2,3)\}$, such that $T$ is symmetric and $\operatorname{F}(T)=(2,3)$. It is possible to compute that this set consists of the following monoids:
\begin{enumerate}
\item $S_1=\mathbb{N}^2\setminus \{(0,1),(0,2),(0,3),(1,0),(1,1)\}$,
\item $S_2=\mathbb{N}^2\setminus \{(0,1),(0,2),(0,3),(1,0),(1,2)\}$,
\item $S_3=\mathbb{N}^2\setminus \{(0,1),(0,2),(0,3),(1,1),(1,3)\}$,
\item $S_4=\mathbb{N}^2\setminus \{(0,1),(0,2),(1,0),(1,1),(2,0)\}$,
\item $S_5=\mathbb{N}^2\setminus \{(0,1),(0,3),(1,0),(1,1),(2,1)\}$,
\item $S_6=\mathbb{N}^2\setminus \{(0,1),(0,2),(0,3),(1,2),(1,3)\}$,
\item $S_7=\mathbb{N}^2\setminus \{(0,1),(0,2),(1,0),(1,2),(2,0)\}$,
\item $S_8=\mathbb{N}^2\setminus \{(0,1),(0,3),(1,0),(1,2),(2,1)\}$,
\item $S_9=\mathbb{N}^2\setminus \{(0,1),(0,3),(1,1),(1,3),(2,1)\}$,
\item $S_{10}=\mathbb{N}^2\setminus \{(0,1),(1,0),(1,1),(2,0),(2,1)\}$,
\item $S_{11}=\mathbb{N}^2\setminus \{(0,1),(1,0),(1,2),(2,0),(2,1)\}$,
\item $S_{12}=\mathbb{N}^2\setminus \{(1,0),(1,1),(2,0),(2,1),(2,2)\}$.
\end{enumerate}
By Algorithm \ref{alg1}, we obtain that if $S\in \operatorname{EI}((2,3))$ and $S\neq S_6$, then $S$ has no children. The monoid $S_6$ has only one child, that is the semigroup $S_{61}=S\cup \{(1,2)\}$, and $S_{61}$ has no children. So, for $\mathcal{C}=\mathbb{N}^{2}$ we have $\mathcal{P}((2,3))=\operatorname{EI}((2,3))\cup \{S_{61}\}$.  
\end{example}

For completeness, as previously mentioned, we  provide an example of the construction of a specific tree with a larger number of vertices. 

\begin{example}
Let $\mathcal{C}$ be the cone in $\mathbb{N}^2$ spanned by the set $\{(1,0),(1,1)\}$. Let $T$ be the $\mathcal{C}$-semigroup having the following set of gaps:
\begin{align*}
\CaH(T)=\lbrace &(1,1),(2, 2),(3,3),(4,4),(5,5),(1,0),(2,1), (3,2), (4,3), (5,4),\\ 
& (6,5), (4,2), (5,3), (6,4), (7,5), (3,0), (6,3), (7,4), (8,4), (9,5)\rbrace.
\end{align*}

Observe that $T$ is $(11,5)$-positioned and $T\setminus \{(11,5)\}$ is symmetric. So, $T\in \operatorname{EI}((11,5))$. 
We now construct the tree whose root is $T$. By Algorithm \ref{alg1} the monoid $T$ has the following children:
 $$T_1=T\cup \{(8,4)\},\qquad T_2=T\cup \{(7,4)\},\qquad
 T_3=T\cup \{(6,3)\}.$$
$T_1$ has the following children:
 $$ T_{11}=T_1 \cup \{(5,3)\}, \qquad T_{12}=T_1\cup \{(4,2)\}.$$
The children of $T_{11}$ are
$$ T_{111}=T_{11} \cup \{(7,4)\}, \qquad T_{112}=T_{11}\cup \{(6,3)\}.$$ 
Continuing the construction from the above monoids, $T_{111}$ has no children, while $T_{112}$ has only one child, that is $T_{1121}=T_{112}\cup \{(7,4)\}$, which has no further descendants.

\noindent The children of $T_{12}$ are these:
$$ T_{121}=T_{12} \cup \{(7,4)\}, \qquad T_{122}=T_{12}\cup \{(6,3)\}.$$ 
Proceeding  with the above monoids, $T_{121}$ has no children, while $T_{122}$ has only one child, that is $T_{1121}=T_{112}\cup \{(7,4)\}$, which have no children.

\noindent For $T_2$, there is a single child:
\[
T_{21} = T_2 \cup \{(8,4)\},
\]
which has no further descendants.

\noindent Finally, $T_3$ has two children:
$$ T_{31}=T_3\cup \{(8,4)\},\qquad T_{32}=T_3\cup \{(7,4)\}.$$
From above, $T_{31}$ has no children, while $T_{32}$ has only one child, that is $T_{321}=T_{32}\cup \{(8,4)\}$, which have no children.

\noindent The picture of tree having $T$ as a root is depicted in Figure~\ref{fig:tree}.

\begin{figure}[ht]
\centering
\begin{tikzpicture}
\tikzset{level distance=3em}
\Tree
        [.$T$  
        [.$T_1$ [.$T_{11}$ $T_{111}$ [.$T_{112}$ $T_{1121}$ ] ][.$T_{12}$ $T_{121}$ [.$T_{122}$ $T_{1221}$ ] ]
        ]    
        [.$T_2$ $T_{21}$
        ]  
        [.$T_3$ $T_{31}$ [.$T_{32}$ $T_{321}$ ]
        ]        
        ]
\end{tikzpicture}
\caption{The rooted tree $G(\mathcal{P}_T((11,5))$}
\label{fig:tree}
\end{figure}
\end{example}

\subsection*{Acknowledgements}
All authors thank the referee for her/his useful remarks and comments.

\subsection*{Funding}

The first author acknowledges support of INDAM-GNSAGA. The second author is partially supported by by grant PID2022-138906NB-C21 funded by MICIU/AEI/10.13039/501100011033 and by ERDF/EU.
Consejería de Universidad, Investigación e Innovación de la Junta de Andalucía project ProyExcel\_00868 and research group FQM343.

\subsection*{Author information}
C. Cisto.  Dipartimento di Scienze Matematiche e Informatiche, Scienze Fisiche e Scienze della Terra, 
Universit\`{a} di Messina, E-98166 Messina, (Italy).
E-mail: carmelo.cisto@unime.it.

\medskip
\noindent
R. Tapia-Ramos. Departamento de Matem\'aticas, Universidad de C\'adiz, E-11406 Jerez de la Frontera (C\'{a}diz, Spain).
E-mail: raquel.tapia@uca.es.

\subsection*{Data Availability}
The authors confirm that the data supporting some findings of this study are available within it.

\subsection*{Conflict of Interest}
The authors declare no conflict of interest.

\bibliographystyle{plain}
\bibliography{biblio.bib}

\end{document}